\titleformat*{\section}{\normalsize\bfseries}
\def\R{\mathbb{R}}
\def\e{{\varepsilon}}
\def\p{\partial}
\def\d{\displaystyle}
\newtheorem{thm}{Theorem}[section]
\newtheorem{lem}[thm]{Lemma}
\newtheorem{cor}[thm]{Corollary}
\newtheorem{prop}[thm]{Proposition}
\newtheorem{rem}[thm]{Remark}
\begin{document}

\title{\large{\bf{Large Time Behavior of Solutions to a Nonlinear Hyperbolic \\Relaxation System with Slowly Decaying Data}}}
\author{Ikki Fukuda\\ [.7em]
Department of Mathematics, Hokkaido University}
\date{}
\maketitle

\footnote[0]{2010 Mathematics Subject Classification: 35B40, 35L70, 35L15.}

\begin{abstract}
We consider the large time asymptotic behavior of the global solutions to the initial value problem for the nonlinear damped wave equation with slowly decaying initial data. When the initial data decay fast enough, it is known that the solution to this problem converges to the self-similar solution to the Burgers equation called a nonlinear diffusion wave and its optimal asymptotic rate is obtained. In this paper, we focus on the case that the initial data decay more slowly than previous works and derive the corresponding asymptotic profiles. Moreover, we investigate how the change of the decay rate of the initial values affect its asymptotic rate.
\end{abstract}

\smallskip
{\bf Keywords:} nonlinear damped wave equation, hyperbolic relaxation system,  asymptotic profile, \\
\indent second asymptotic profile, optimal decay estimate, slowly decaying data. 

\section{Introduction}

In this paper, we consider the large time behavior of the global solutions to the initial value problem for the following system:
\begin{align}
\label{1-1}
\begin{split}
&u_{t}+v_{x}=0, \ \  v_{t}+u_{x}=f(u)-v, \ x\in \R,\ \ t>0, \\
&u(x, 0)=u_{0}(x), \ v(x, 0)=v_{0}(x), \ x\in \R, 
\end{split}
\end{align}
where $f: \R \to \R$ is a given smooth function. This system is a typical example of hyperbolic system of conservation laws with relaxation called Jin-Xin model, which arises as mathematical models in several physical phenomena, e.g. non-equilibrium gas dynamics, magnetohydrodynamics and viscoelasticity (see e.g. \cite{JX95, W74}). 

If we delete $v$ from \eqref{1-1}, we obtain the following damped wave equation with a nonlinear convection term:
\begin{align}
\label{1-2}
\begin{split}
&u_{tt}-u_{xx}+u_{t}+(f(u))_{x}=0, \ x\in \R,\ \ t>0, \\
&u(x, 0)=u_{0}(x), \ u_{t}(x, 0)=u_{1}(x),\ x\in \R, 
\end{split}
\end{align}
where the initial data $u_{1}(x)=-\p_{x}v_{0}(x)$. In the present paper, we consider \eqref{1-2} with the flux function $f(u)\equiv au+\frac{b}{2}u^{2}+\frac{c}{3!}u^{3}$, where $|a|<1$, $b\neq0$ and $c\in \R$. In addition, we assume that 
\begin{align}
\label{1-3}
\begin{split}
&\exists \alpha>1, \ \ \exists C>0 \ \ s.t. \ \ |u_{0}(x)|\le C(1+|x|)^{-\alpha}, \ \ x\in \R, \\
&\exists \beta>1, \ \ \exists C>0 \ \ s.t. \ \ |u_{1}(x)|\le C(1+|x|)^{-\beta}, \ \ x\in \R.  
\end{split}
\end{align}
The purpose of our study is to obtain an asymptotic profile of the solution $u(x, t)$ and to examine the optimality of its asymptotic rate to the asymptotic function. 

First of all, we recall known results about the asymptotic behavior of the solutions to \eqref{1-2}. Orive and Zuazua~\cite{OZ06} studied the global existence and the asymptotic behavior of the solution of \eqref{1-2} with $a=0$ when $u_{0}\in H^{1}(\R) \cap L^{1}(\R)$ and $u_{1}\in L^{2}(\R) \cap L^{1}(\R)$. Moreover, Ueda and Kawashima~\cite{UK07} constructed the solution to \eqref{1-2}, provided the initial data $u_{0}\in W^{1, p}(\R) \cap L^{1}(\R)$ and $u_{1}\in L^{p}(\R) \cap L^{1}(\R)$ for $1\le p \le \infty$, and they showed that the solution of \eqref{1-2} converges to a nonlinear diffusion wave defined by
\begin{equation}
\label{1-4}
\chi(x, t)\equiv \frac{1}{\sqrt{1+t}} \chi_{*} \biggl(\frac{x-a(1+t)}{\sqrt{1+t}}\biggl), \ x\in \R,\ \ t>0,
\end{equation}
where
\begin{equation}
\label{1-5}
\chi_{*}(x)\equiv \frac{\sqrt{\mu}}{b}\frac{(e^{\frac{bM}{2\mu}}-1)e^{-\frac{x^{2}}{4\mu}}}{\sqrt{\pi}+(e^{\frac{bM}{2\mu}}-1)\int_{x/\sqrt{4\mu}}^{\infty}e^{-y^{2}}dy}, \ M \equiv \int_{\R}(u_{0}(x)+u_{1}(x))dx, \ \mu \equiv 1-a^{2}.
\end{equation}
More precisely, if $u_{0}\in W^{1, p}(\R) \cap L^{1}_{1}(\R)$, $u_{1}\in L^{p}(\R) \cap L^{1}_{1}(\R)$ and $\|u_{0}\|_{W^{1, p}}+\|u_{0}\|_{L^{1}}+\|u_{1}\|_{L^{p}}+\|u_{1}\|_{L^{1}}$ is sufficiently small, then, for any $\e>0$, we have 
\begin{equation}
\label{1-7}
\|\p_{x}^{l}(u(\cdot, t)-\chi(\cdot, t)) \|_{L^{p}} \le C(1+t)^{-1+\frac{1}{2p}-\frac{l}{2}+\e}, \ \ t\ge0, \ \ l=0, 1.
\end{equation}
Here the weighted Lebesgue space $L_{1}^{1}(\R)$ is defined by 
\begin{equation*}
L^{1}_{1}(\R) \equiv \biggl\{ f \in L^{1}(\R); \ \|f\|_{L^{1}_{1}} \equiv \int_{\R}|f(x)|(1+|x|)dx<\infty \biggl\}.
\end{equation*}
Also, we note that $\chi(x, t)$ is a solution of the following Burgers equation:
\begin{equation}
\label{1-6}
\chi_{t}+\left(a\chi+\frac{b}{2}\chi^{2}\right)_{x}=\mu \chi_{xx}, \ \ \int_{\R}\chi(x, 0)dx=M.  
\end{equation}

Moreover, the optimality of the asymptotic rate to the nonlinear diffusion wave was obtained by Kato and Ueda~\cite{KU17} by constructing the second asymptotic profile which is the leading term of $u-\chi$. Indeed, if $u_{0}\in W^{s, p}(\R) \cap W^{2, 1}(\R) \cap L^{1}_{1}(\R)$, $u_{1}\in W^{s-1, p}(\R) \cap W^{1, 1}(\R) \cap L^{1}_{1}(\R)$ for $s\ge2$ and $1\le p\le \infty$, and $\|u_{0}\|_{W^{s, p}}+\|u_{0}\|_{W^{2, 1}}+\|u_{1}\|_{W^{s-1, p}}+\|u_{1}\|_{W^{1, 1}}$ is sufficiently small, then we have
\begin{equation}
\label{1-8}
\|\p_{x}^{l}(u(\cdot, t)-\chi(\cdot, t)-V(\cdot, t))\|_{L^{p}} \le C(1+t)^{-1+\frac{1}{2p}-\frac{l}{2}}, \ \ t\ge1
\end{equation}
for $0\le l\le s-2$, where
\begin{equation}
\label{1-9}
V(x, t)\equiv -\kappa dV_{*}\biggl(\frac{x-a(1+t)}{\sqrt{1+t}}\biggl)(1+t)^{-1}\log(1+t), 
\end{equation}
and
\begin{align}
\label{1-10}
&V_{*}(x)\equiv \frac{1}{\sqrt{4\pi \mu}}\frac{d}{dx}(\eta_{*}(x)e^{-\frac{x^{2}}{4\mu}}), \ \ \eta_{*}(x)\equiv \exp \biggl(\frac{b}{2\mu}\int_{-\infty}^{x}\chi_{*}(y)dy\biggl), \\
\label{1-11}
&d\equiv \int_{\R}(\eta_{*}(y))^{-1}(\chi_{*}(y))^{3}dy, \ \ \kappa \equiv \frac{ab^{2}}{4\mu}+\frac{c}{3!}. 
\end{align}
From \eqref{1-8}, the triangle inequality and \eqref{1-9}, one can obtain the following optimal decay estimate:
\begin{equation}
\label{1-12}
\|\p_{x}^{l}(u(\cdot, t)-\chi(\cdot, t))\|_{L^{p}}=(\tilde{C}+o(1))(1+t)^{-1+\frac{1}{2p}-\frac{l}{2}}\log(1+t), \ \ 0\le l\le s-2
\end{equation}
as $t\to \infty$, where $\tilde{C}\equiv |\kappa d|\|\p_{x}^{l}V_{*}\|_{L^{p}}$. Therefore, we see that the solution $u(x, t)$ to \eqref{1-2} tends to the nonlinear diffusion wave $\chi(x, t)$ at the rate of $t^{-1+\frac{1}{2p}}\log t$ in the $L^{p}$-sense if $M \neq0$ and $\kappa \neq0$, i.e., we cannot take $\e=0$ in \eqref{1-7}. The similar estimates for \eqref{1-7} and \eqref{1-8} are obtained for Burgers type equations such as generalized Burgers equation, KdV-Burgers equation and BBM-Burgers equation (cf. \cite{F19-1, F19-2, HKN07, HN06, KP05, Ka07, MN04}).

The above results \cite{UK07, KU17} are corresponding to the case where the decay rate of the initial data $u_{0}$ and $u_{1}$ are rapid because $u_{0}, u_{1}\in L^{1}_{1}(\R)$ are realized when $\alpha, \beta>2$ in \eqref{1-3}. However for \eqref{1-2} in the case of $1<\alpha \le2$ or $1<\beta \le 2$ in \eqref{1-3}, it is not known that the optimal asymptotic rate to the nonlinear diffusion wave, as far as we know. On the other hand, it is studied that the asymptotic profile for the solution to the damped wave equation with power type nonlinearity for slowly decaying data in the supercritical case. Actually, Narazaki and Nishihara~\cite{NN08} studied the following equation when the initial data are not in $L^{1}(\R)$:
\begin{align}
\label{1-13}
\begin{split}
&u_{tt}-u_{xx}+u_{t}=|u|^{p-1}u, \ x\in \R,\ \ t>0, \\
&u(x, 0)=u_{0}(x), \ u_{t}(x, 0)=u_{1}(x),\ x\in \R.
\end{split}
\end{align}
They assumed that the initial data satisfies the condition \eqref{1-3} with $\alpha=\beta=:k$ and $0<k \le1$, and showed that if $p>1+2/k$ and the data $u_{0}\in B^{1, k}(\R)$, $u_{1}\in B^{0, k}(\R)$ are small, then the asymptotic profile is given by 
\begin{equation}
\label{1-14}
\Psi(x, t)=c_{k}\int_{\R}\frac{1}{\sqrt{4\pi t}}e^{-\frac{(x-y)^{2}}{4t}}(1+|y|)^{-k}dy, 
\end{equation}
provided that the data satisfies $\lim_{|x|\to \infty}(1+|x|)^{k}(u_{0}+u_{1})(x)=c_{k}$. Here, we set 
\[B^{m, k}(\R)\equiv \{f\in C^{m}(\R); (1+|x|)^{k}|\p_{x}^{l}f|\in L^{\infty}(\R), \ 0\le l \le m\}.\] 
More precisely, they proved  
\begin{align}
\label{1-15}
\begin{split}
\lim_{t\to \infty} a_{k}(t)\|u(\cdot, t)-\Psi(\cdot, t)\|_{L^{\infty}}=0, \ a_{k}(t)=\begin{cases}
(1+t)^{k/2}, &0<k<1, \\
\frac{(1+t)^{1/2}}{\log(1+t)}, &k=1.\\
\end{cases}
\end{split}
\end{align}
Moreover in~\cite{NN08}, the damped wave equation \eqref{1-13} in two and three space dimensional cases were also studied. For the related results concerning \eqref{1-13}, we also refer to \cite{IIOW19, IIW17}. However, as we mentioned in the above, the asymptotic profile of the solutions to \eqref{1-2} with slowly decaying data is not well known even if the data are in $L^{1}(\R)$. For this reason, we would like to analyze the asymptotic behavior of the solution to \eqref{1-2} in the case of $1<\alpha \le2$ or $1<\beta \le2$ in \eqref{1-3}. \\

Now, we state our first main result which generalizes the result given in~\cite{UK07}: 
\begin{thm}\label{main1}
Assume the condition \eqref{1-3} holds with $1<\min\{\alpha, \beta\} \le 2$. Let $s$ be a positive integer and $1\le p\le \infty$. Suppose that $u_{0}\in W^{s, p}(\R)$, $u_{1}\in W^{s-1,p}(\R)$ and $\|u_{0}\|_{W^{s, p}}+\|u_{0}\|_{L^{1}}+\|u_{1}\|_{W^{s-1, p}}+\|u_{1}\|_{L^{1}}$ is sufficiently small. Then \eqref{1-2} has a unique global solution $u(x, t)$ with
\begin{equation*}
u\in \begin{cases}
\bigcap_{k=0}^{\sigma}C^{k}([0, \infty); W^{s-k, p})\cap C([0, \infty); L^{1}), &1\le p <\infty,\\
\bigcap_{k=0}^{\sigma}W^{k, \infty}(0, \infty; W^{s-k, \infty})\cap C([0, \infty); L^{1}), &p=\infty,\end{cases}
\end{equation*}
where $\sigma=\min\{2, s\}$. Moreover, for any $\e>0$, the estimate
\begin{align}
\label{1-16}
\|u(\cdot, t)-\chi(\cdot, t)\|_{L^{q}}\le C\begin{cases}
(1+t)^{-\frac{\min\{\alpha, \beta\}}{2}+\frac{1}{2q}}, &t\ge0, \ 1<\min\{\alpha, \beta\}<2,\\
(1+t)^{-1+\frac{1}{2q}+\e}, &t\ge0, \ \min\{\alpha, \beta\}=2
\end{cases}
\end{align}
holds for any $q$ with $1\le q\le \infty$, and the estimate 
\begin{align}
\label{1-17}
\|\p_{t}^{k}\p_{x}^{l}(u(\cdot, t)-\chi(\cdot, t))\|_{L^{p}}\le C\begin{cases}
(1+t)^{-\frac{\min\{\alpha, \beta\}}{2}+\frac{1}{2p}-\frac{k+l}{2}}, &t\ge0, \ 1<\min\{\alpha, \beta\}<2,\\
(1+t)^{-1+\frac{1}{2p}-\frac{k+l}{2}+\e}, &t\ge0, \ \min\{\alpha, \beta\}=2
\end{cases}
\end{align}
holds for $0\le k\le2$ and $l\ge 0$ with $0\le k+l\le s$, where $\chi(x, t)$ is defined by \eqref{1-4}.
\end{thm}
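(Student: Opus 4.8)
The plan is to graft a single slow-decay estimate onto the time-decay scheme of Ueda--Kawashima~\cite{UK07}. Since $\alpha,\beta>1$ in \eqref{1-3} already give $u_0,u_1\in L^1(\R)$, the contraction/Duhamel construction of~\cite{UK07}, which only requires smallness in $W^{s,p}\cap L^1$ and $W^{s-1,p}\cap L^1$, applies without change and produces the global solution in the stated class together with the basic rates $\|\p_x^l u(\cdot,t)\|_{L^p}\lesssim(1+t)^{-\frac12(1-1/p)-l/2}$. The weighted hypothesis $u_0,u_1\in L^1_1$ of~\cite{UK07,KU17} is used only to compare the evolution of the data with its self-similar profile; my task is to isolate that comparison and redo it under the mere pointwise bound \eqref{1-3}.

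To organize the comparison I would introduce the intermediate parabolic profile $\phi$, the solution of the viscous Burgers problem $\phi_t+(a\phi+\frac b2\phi^2)_x=\mu\phi_{xx}$ with datum $\phi(\cdot,0)=u_0+u_1$, $\mu=1-a^2$, and write
\[
u-\chi=(u-\phi)+(\phi-\chi).
\]
The low-frequency root of the linearization, $\lambda_+(\xi)=\frac{-1+\sqrt{1-4(\xi^2+ia\xi)}}{2}=-ia\xi-\mu\xi^2+O(\xi^3)$, shows that the damped-wave solution is driven at leading order by the transported heat evolution of the \emph{single} combination $u_0+u_1$; thus $u-\phi$ is the nonlinear diffusion-phenomenon error, carries the cubic term and the wave-versus-heat discrepancy (hence the logarithmic correction of~\cite{KU17}), and is controlled by $\|(u_0,u_1)\|_{L^1}$ alone at the fast rate $(1+t)^{-1+\frac1{2q}}\log(1+t)$. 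The whole slow behavior therefore sits in $\phi-\chi$, the convergence of a Burgers solution to its self-similar profile of the same mass $M$, where the slow spatial decay enters through $u_0+u_1\sim(1+|x|)^{-\gamma}$, $\gamma:=\min\{\alpha,\beta\}$.

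The crux is an estimate for the heat semigroup acting on slowly decaying data: if $|g(x)|\le C(1+|x|)^{-\gamma}$, $\gamma>1$, and $M_g=\int_\R g$, then, with $G_\mu(x,t)=(4\pi\mu t)^{-1/2}e^{-x^2/(4\mu t)}$,
\[
\big\|e^{\mu t\p_x^2}g-M_gG_\mu(\cdot,t)\big\|_{L^q}\lesssim
\begin{cases}
(1+t)^{-\frac{\gamma}{2}+\frac{1}{2q}}, & 1<\gamma<2,\\[2pt]
(1+t)^{-1+\frac{1}{2q}+\e}, & \gamma=2.
\end{cases}
\]
I would prove this by writing the left-hand side as $\int_\R\big(G_\mu(x-y,t)-G_\mu(x,t)\big)g(y)\,dy$, taking the $L^q_x$-norm through Minkowski's inequality, and using $\|G_\mu(\cdot-y,t)-G_\mu(\cdot,t)\|_{L^q}\lesssim\min\{|y|\,(1+t)^{-1+\frac1{2q}},(1+t)^{-\frac12+\frac1{2q}}\}$; the problem collapses to $\int_{|y|\le\sqrt t}|y|(1+|y|)^{-\gamma}dy$ and $\int_{|y|>\sqrt t}(1+|y|)^{-\gamma}dy$, which contribute $(1+t)^{-1+\frac1{2q}}t^{1-\gamma/2}$ and $(1+t)^{-\frac12+\frac1{2q}}t^{(1-\gamma)/2}$, both equal to $t^{-\gamma/2+\frac1{2q}}$ for $1<\gamma<2$, with a borderline $\log t$ at $\gamma=2$ that I absorb into $\e$. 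The same argument with $M_g=0$ gives $\|e^{\mu t\p_x^2}g\|_{L^q}\lesssim(1+t)^{-\gamma/2+\frac1{2q}}$ for mean-zero $g$ of decay $\gamma$. This is exactly what is needed for $\phi-\chi$: the Cole--Hopf transformation $\Theta=\exp(\frac{b}{2\mu}\int_{-\infty}^{\cdot}\phi)$ linearizes Burgers to the heat equation and reduces $\phi-\chi$ to $\p_x(\Theta-\Theta_\chi)$, whose heat datum has mean zero and decay order $\gamma$ (one derivative improves the decay of $\int^{\cdot}(u_0+u_1-\chi_*)\sim|x|^{-(\gamma-1)}$ to $(1+|x|)^{-\gamma}$), so the lemma applies with the Gaussian term absent and delivers the slow rate. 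Spatial derivatives are handled by differentiating $G_\mu$, which gains a factor $(1+t)^{-l/2}$.

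Assembling, for $1<\gamma<2$ the term $\phi-\chi\sim(1+t)^{-\gamma/2+\frac1{2q}}$ strictly dominates the fast $u-\phi$ contribution and reproduces \eqref{1-16}--\eqref{1-17}; at $\gamma=2$ both contributions are of order $(1+t)^{-1+\frac1{2q}}\log(1+t)$, whence the arbitrarily small loss $\e$. Time derivatives follow by converting $\p_t$ through $u_{tt}=u_{xx}-u_t-(f(u))_x$, which bounds the order by $0\le k\le2$ and accounts for $\sigma=\min\{2,s\}$. I expect the main obstacle to be twofold: first, confirming that $u-\phi$ is genuinely fast despite the slowly decaying data --- which I would settle by closing the bound for $u-\chi$ as a fixed point in a time-weighted norm, so that the slow decay feeds only the isolated, explicitly computed bracket $\phi-\chi$; and second, the bookkeeping at the borderline $\gamma=2$, where the slow linear error and the nonlinear logarithmic correction coincide in order and must be shown not to combine into anything worse than the stated $\e$-loss.
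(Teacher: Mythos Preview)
Your key lemma---the slow-decay estimate for $e^{\mu t\partial_x^2}g - M_g G_\mu$ obtained by splitting the $y$-integral at $|y|=\sqrt t$---is exactly the paper's Lemma~2.4, and its proof is the same. Where you diverge is in organization: you interpose the Burgers solution $\phi$ with datum $u_0+u_1$ and split $u-\chi=(u-\phi)+(\phi-\chi)$, handling the first piece as a diffusion-phenomenon error and the second via Cole--Hopf. The paper instead writes a single Duhamel identity for $u-\chi$ (its equation~(3.4), with terms $I_1,\dots,I_6$), in which the only slow contribution is $I_2=G_0(t)*(u_0+u_1-\chi_0)$, already covered by the lemma; the remaining terms are either fast from $\|(u_0,u_1)\|_{L^1}$ alone, or feed back through $I_6=-\frac b2\int_0^t\partial_xG_0(t-\tau)*((u+\chi)(u-\chi))\,d\tau$ and close by smallness in one time-weighted bootstrap (first in $L^1$, then $L^\infty$, then interpolate, then $\partial_x^l$ in $L^p$, and finally $\partial_t,\partial_t^2$ by differentiating the Duhamel formula). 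This single bootstrap replaces both of your separate analyses and avoids having to establish decay for the auxiliary $\phi$.

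Your Cole--Hopf step is also looser than it looks: the relation is $\phi-\chi=\tfrac{2\mu}{b}\,\partial_x\log(\Theta/\Theta_\chi)$, not $\partial_x(\Theta-\Theta_\chi)$, so after expanding you also need control of $\Theta-\Theta_\chi$ itself, whose datum decays only like $|x|^{-(\gamma-1)}\notin L^1$ for $1<\gamma\le2$---a heat estimate in the Narazaki--Nishihara regime rather than the mean-zero $L^1$ lemma you state. This is fixable (the extra term carries a factor of $\chi$ or $\partial_x\eta$, which has Gaussian tails in the similarity variable; this is essentially how the paper's representation operator $U[\cdot]$ in Section~4 works), but it is additional work that the direct bootstrap on $u-\chi$ sidesteps. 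Your own stated worry---closing $u-\phi$ by ``closing the bound for $u-\chi$ as a fixed point in a time-weighted norm''---already points toward the paper's route: once you bootstrap $u-\chi$ directly, the intermediate $\phi$ is superfluous.
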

Furthermore, we can show that the above asymptotic rate given in \eqref{1-16} is optimal with respect to the time decaying order in the $L^{\infty}$-sense by constructing the second asymptotic profile for the solution to \eqref{1-2}. To state such a result, we define the following functions
\begin{equation}
\label{1-21}
Z(x, t)\equiv \int_{\R}\frac{c_{\alpha, \beta}(y)\p_{x}(G_{0}(x-y, t)\eta(x, t))}{(1+|y|)^{\min\{\alpha, \beta\}-1}}dy, \ \ c_{\alpha, \beta}(y)\equiv\begin{cases}
c_{\alpha, \beta}^{+}, &y\ge0, \\
c_{\alpha, \beta}^{-}, &y<0
\end{cases}
\end{equation}
and 
\begin{equation}
\label{1-22}
G_{0}(x, t)\equiv \frac{1}{\sqrt{4\pi \mu t}}e^{-\frac{(x-at)^{2}}{4\mu t}}, \ \ \eta(x, t)\equiv \eta_{*}\biggl(\frac{x-a(1+t)}{\sqrt{1+t}}\biggl)=\exp\biggl(\frac{b}{2\mu}\int_{-\infty}^{x}\chi(y, t)dy\biggl)
\end{equation}
with $\eta_{*}(x)$ being defined by \eqref{1-10}. Then, we have the following result: 
\begin{thm}\label{main2}
Assume the condition \eqref{1-3} holds with $1<\min\{\alpha, \beta\} \le 2$, $u_{0}\in H^{2}(\R)\cap W^{2,1}(\R)$ and $u_{1}\in H^{1}(\R)\cap W^{1, 1}(\R)$. Suppose that $\|u_{0}\|_{H^{2}}+\|u_{0}\|_{W^{2,1}}+\|u_{1}\|_{H^{1}}+\|u_{1}\|_{W^{1,1}}$ is sufficiently small. We set $\chi_{0}(x)\equiv \chi(x, 0)$, $\eta_{0}(x)\equiv \eta(x, 0)$ and 
\begin{equation}
\label{1-18}
z_{0}(x)\equiv \eta_{0}(x)^{-1}\int_{-\infty}^{x}(u_{0}(y)+u_{1}(y)-\chi_{0}(y))dy.
\end{equation} 
If there exists $\lim_{x\to \pm \infty}(1+|x|)^{\min\{\alpha, \beta\}-1}z_{0}(x)\equiv c_{\alpha, \beta}^{\pm}$, then the solution to \eqref{1-2} satisfies
\begin{align}
\label{1-19}
&\lim_{t\to \infty}(1+t)^{\frac{\min\{\alpha, \beta\}}{2}}\|u(\cdot, t)-\chi(\cdot, t)-Z(\cdot, t)\|_{L^{\infty}}=0, \ 1<\min\{\alpha, \beta\}<2, \\
\label{1-20}
&\lim_{t\to \infty}\frac{(1+t)}{\log(1+t)}\|u(\cdot, t)-\chi(\cdot, t)-Z(\cdot, t)-V(\cdot, t)\|_{L^{\infty}}=0, \ \min\{\alpha, \beta\}=2,  
\end{align}
where $\chi(x, t)$ and $V(x, t)$ are defined by \eqref{1-4} and \eqref{1-9}, respectively, while $Z(x, t)$ and $\eta(x, t)$ are defined by \eqref{1-21} and \eqref{1-22}, respectively. Moreover, if $M \neq0$, there exist $\nu_{0}>0$ and $\nu_{1}>0$ independent of $x$ and $t$ such that 
\begin{align}
\label{1-23}
\begin{split}
\|Z(\cdot, t)\|_{L^{\infty}} \begin{cases}
\le C\max\{|c_{\alpha, \beta}^{+}|, |c_{\alpha, \beta}^{-}|\}(1+t)^{-\frac{\min\{\alpha, \beta\}}{2}},  \\[.2em]
\ge \nu_{0}|\tilde{\nu_{0}}|(1+t)^{-\frac{\min\{\alpha, \beta\}}{2}} &
\end{cases}
\end{split}
\end{align}
holds for sufficiently large $t$ with $1<\min\{\alpha, \beta\}<2$ and 
\begin{align}
\label{1-24}
\begin{split}
\|Z(\cdot, t)+V(\cdot, t)\|_{L^{\infty}} \begin{cases}
\le C(\max\{|c_{\alpha, \beta}^{+}|, |c_{\alpha, \beta}^{-}|\}+|\kappa d|\|V_{*}(\cdot)\|_{L^{\infty}})(1+t)^{-1}\log(1+t), & \\
\ge \nu_{1}|\tilde{\nu_{1}}|(1+t)^{-1}\log(1+t) &
\end{cases}
\end{split}
\end{align}
holds for sufficiently large $t$ with $\min\{\alpha, \beta\}=2$, where 
\begin{align}
\label{1-25}
\begin{split}
\tilde{\nu_{0}} &\equiv \sqrt{\mu}(c_{\alpha, \beta}^{+}-c_{\alpha, \beta}^{-})\Gamma\biggl(\frac{3-\min\{\alpha, \beta\}}{2}\biggl)+\frac{b\chi_{*}(0)(c_{\alpha, \beta}^{+}+c_{\alpha, \beta}^{-})}{2-\min\{\alpha, \beta\}}\Gamma\biggl(2-\frac{\min\{\alpha, \beta\}}{2}\biggl), \\
\tilde{\nu_{1}} &\equiv \frac{c_{\alpha, \beta}^{+}+c_{\alpha, \beta}^{-}}{2}-\kappa d, \ \ \Gamma (s)\equiv \int_{0}^{\infty}e^{-x}x^{s-1}dx, \ \ s>0,
\end{split}
\end{align}
while $M$, $d$ and $\kappa$ are defined by \eqref{1-5} and \eqref{1-11}, respectively. 
\end{thm}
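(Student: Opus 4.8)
The plan is to identify the leading term of $u-\chi$ by a Cole--Hopf type linearization and then analyze the slowly decaying heat profile it generates; throughout I write $m:=\min\{\alpha,\beta\}\in(1,2]$. Since $\chi$ solves the Burgers equation \eqref{1-6}, the exponential weight $\eta$ of \eqref{1-22} linearizes its convective part, and \eqref{1-18} shows that $\partial_x(\eta_0 z_0)=u_0+u_1-\chi_0$. Guided by this, I would introduce a transformed remainder $z$ with $z|_{t=0}=z_0$, arranged so that $u-\chi$ is recovered to leading order as $\partial_x(\eta z)$ and so that $z$ obeys a drifted heat equation $z_t=\mu z_{xx}-a z_x+(\text{l.o.t.})$, the lower order terms collecting the quadratic nonlinearity coming from $f$ and the relaxation correction. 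This reduction is essentially the one already available from the proof of Theorem~\ref{main1}; the genuinely new feature is that $z_0$ decays only polynomially, with $z_0(x)\sim c_{\alpha,\beta}^{\pm}(1+|x|)^{-(m-1)}$ as $x\to\pm\infty$, a tail inherited from the primitive $\int_{-\infty}^x(u_0+u_1-\chi_0)$.

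Next I would show that to leading order $z(x,t)\simeq(G_0*z_0)(x,t)$ and that the slow tails of $z_0$ control the asymptotics. Splitting $z_0=\psi+(z_0-\psi)$ with $\psi(y):=c_{\alpha,\beta}(y)(1+|y|)^{-(m-1)}$, the faster decaying and essentially integrable remainder $z_0-\psi$ contributes only lower order terms, while $G_0*\psi$ supplies the profile; undoing the transformation gives exactly $Z=\partial_x(\eta\,(G_0*\psi))$ of \eqref{1-21}. The crucial point is that the heat semigroup applied to data $\sim(1+|y|)^{-(m-1)}$ decays only like $t^{-(m-1)/2}$ when $m-1\in(0,1)$, strictly slower than the $t^{-1/2}$ rate of $L^1$ data, and after the $x$-derivative this yields the $t^{-m/2}$ scale of $Z$. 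In the subcritical range $1<m<2$ this rate dominates the nonlinear $t^{-1}\log t$ correction $V$ of \eqref{1-9}, so only $Z$ survives in \eqref{1-19}. In the critical case $m=2$ the weight $(1+|y|)^{-1}$ forces the convolution to generate a logarithm, so $Z$ itself is of order $t^{-1}\log t$; since the self-interaction of $\chi$ produces $V$ at precisely the same order, the full profile is the sum $Z+V$ of \eqref{1-20}, consistent with \cite{KU17}.

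For the convergence statements \eqref{1-19}--\eqref{1-20} I would write $u-\chi-Z$ (respectively $u-\chi-Z-V$) as a sum of three contributions: the decay of $z-G_0*z_0$ obtained from Duhamel's formula together with the rates of Theorem~\ref{main1}; the lower order contribution of $G_0*(z_0-\psi)$; and the error made in replacing $\partial_x(\eta z)$ by the linear profile. Each must be shown to be $o(t^{-m/2})$ in $L^\infty$ (respectively $o(t^{-1}\log t)$). This is the \emph{main obstacle}: the usual $L^1$--$L^\infty$ smoothing estimates are unavailable because the data decay slowly, so the argument has to be carried out in weighted or pointwise norms tuned to the rate $t^{-m/2}$, and one must check that the quadratic and relaxation error terms do not contaminate the leading order despite the heavy spatial tails.

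Finally, for the optimality bounds \eqref{1-23}--\eqref{1-24} I would compute $\|Z\|_{L^\infty}$ and $\|Z+V\|_{L^\infty}$ directly from \eqref{1-21}. The upper bounds follow by majorizing $|c_{\alpha,\beta}(y)|\le\max\{|c_{\alpha,\beta}^{+}|,|c_{\alpha,\beta}^{-}|\}$ and invoking the decay just established. For the lower bounds I would pass to the self-similar variable $\xi=(x-a(1+t))/\sqrt{1+t}$ and rescale $y=\sqrt{t}\,w$; the slow weight then produces integrals of the Gaussian against powers of $w$, in which the sign split of $c_{\alpha,\beta}$ separates an odd part proportional to $c_{\alpha,\beta}^{+}-c_{\alpha,\beta}^{-}$ (coming from the derivative hitting the Gaussian) from an even part proportional to $c_{\alpha,\beta}^{+}+c_{\alpha,\beta}^{-}$ (coming from $\partial_x\eta\sim\chi$). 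Evaluating the leading self-similar profile at a fixed point and using $\int_0^\infty e^{-r}r^{\sigma-1}dr=\Gamma(\sigma)$ reproduces the constant $\tilde\nu_0$ of \eqref{1-25} in the subcritical case. In the critical case only the even part survives to generate the logarithm, and combined with the $-\kappa d$ contribution of $V$ it yields $\tilde\nu_1$. The hypothesis $M\neq0$ keeps $\chi\not\equiv0$ and $\chi_*(0)\neq0$, so the profile does not degenerate and $\|Z\|_{L^\infty}$ is bounded below by a positive multiple of $|\tilde\nu_0|(1+t)^{-m/2}$.
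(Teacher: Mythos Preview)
Your proposal is essentially the paper's approach: the Cole--Hopf weighted operator $U$ of \eqref{2-26}--\eqref{2-27} is precisely your $\partial_x(\eta\,\cdot\,)$ applied to the heat evolution of $z_0$ (with the paper choosing $\psi=u+u_t-\chi$ so that the relaxation term is carried explicitly and removed at the end via $\|u_t\|_{L^\infty}\le C(1+t)^{-1}$), Proposition~\ref{AF} is your tail-splitting of $z_0$ carried out as an $\varepsilon$--$R$ argument, and the lower bounds \eqref{1-23}--\eqref{1-24} are obtained exactly as you describe by evaluating at $x=at$ and reducing to Gamma integrals. One clarification worth making: the ``main obstacle'' you flag is slightly misplaced---the Duhamel contributions (quadratic, cubic, and relaxation terms) are handled by ordinary $L^1$--$L^\infty$ and $L^2$--$L^\infty$ estimates through Lemma~\ref{N.decay}, since Theorem~\ref{main1} already supplies enough decay for $(u-\chi)^2$, $u^3$, $u_t\chi$, $u_{tx}$, etc., and no weighted norms are needed there; the slow spatial tails enter only in the linear initial-data piece $U[\psi_0]$, and for that the paper uses the direct pointwise splitting \eqref{4-9} of the convolution rather than a weighted functional framework.
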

\noindent 
By virtue of Theorem 1.2, the optimality of the estimate \eqref{1-16} can be examined from \eqref{1-19}, \eqref{1-20}, \eqref{1-23} and \eqref{1-24}. Actually, we have the following optimal estimates of $u-\chi$:
\begin{cor}\label{main.cor}
Under the same assumptions in Theorem~\ref{main2}, if $\tilde{\nu_{0}}\neq0$ and $\tilde{\nu_{1}}\neq0$, then the following estimates 
\begin{align}
\label{1-26}
\|u(\cdot, t)-\chi(\cdot, t)\|_{L^{\infty}} \sim \begin{cases}
(1+t)^{-\frac{\min\{\alpha, \beta\}}{2}}, &1<\min\{\alpha, \beta\}<2, \\
(1+t)^{-1}\log(1+t), &\min\{\alpha, \beta\}=2
\end{cases}
\end{align}
hold for sufficiently large $t$.
\end{cor}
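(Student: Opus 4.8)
The plan is to derive the two-sided relation \eqref{1-26} directly from Theorem~\ref{main2}, by decomposing $u-\chi$ into its second asymptotic profile plus a remainder and then playing the upper bound for that profile against the lower bound through the triangle inequality. Concretely, in the range $1<\min\{\alpha,\beta\}<2$ I would write $u-\chi=(u-\chi-Z)+Z$, while for $\min\{\alpha,\beta\}=2$ I would write $u-\chi=(u-\chi-Z-V)+(Z+V)$; in each case the parenthesized remainder is controlled by \eqref{1-19} or \eqref{1-20}, and the remaining profile is controlled from both sides by \eqref{1-23} or \eqref{1-24}.

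For the upper bound in the subcritical case the ordinary triangle inequality gives $\|u-\chi\|_{L^\infty}\le\|u-\chi-Z\|_{L^\infty}+\|Z\|_{L^\infty}$; by \eqref{1-19} the first term is $o((1+t)^{-\min\{\alpha,\beta\}/2})$ and by the upper line of \eqref{1-23} the second is $O((1+t)^{-\min\{\alpha,\beta\}/2})$, so the sum has the asserted order. For the lower bound I would use the reverse triangle inequality $\|u-\chi\|_{L^\infty}\ge\|Z\|_{L^\infty}-\|u-\chi-Z\|_{L^\infty}$, bound $\|Z\|_{L^\infty}$ below by $\nu_0|\tilde{\nu_0}|(1+t)^{-\min\{\alpha,\beta\}/2}$ via \eqref{1-23}, and absorb the remainder: since \eqref{1-19} makes $\|u-\chi-Z\|_{L^\infty}$ a $o(\cdot)$ of the same power, there is a time $T$ beyond which it is at most $\tfrac12\nu_0|\tilde{\nu_0}|(1+t)^{-\min\{\alpha,\beta\}/2}$, whence $\|u-\chi\|_{L^\infty}\ge\tfrac12\nu_0|\tilde{\nu_0}|(1+t)^{-\min\{\alpha,\beta\}/2}$ for $t\ge T$. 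Here the hypothesis $\tilde{\nu_0}\neq0$ (together with $M\neq0$, which activates \eqref{1-23}) is precisely what makes the leading coefficient $\nu_0|\tilde{\nu_0}|$ strictly positive, so the lower bound is nontrivial. The critical case $\min\{\alpha,\beta\}=2$ is identical word for word with $Z$ replaced by $Z+V$, \eqref{1-19} replaced by \eqref{1-20}, \eqref{1-23} replaced by \eqref{1-24}, and $\nu_0|\tilde{\nu_0}|$ replaced by $\nu_1|\tilde{\nu_1}|$, the rate now being $(1+t)^{-1}\log(1+t)$.

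There is no substantial obstacle here: the corollary is a soft consequence of Theorem~\ref{main2}, all the analytic content having already been placed into the construction of $Z$ (and $V$) and into the sharp two-sided estimates \eqref{1-23}--\eqref{1-24}. The only point requiring care is the matching of orders, namely that the remainder furnished by \eqref{1-19}/\eqref{1-20} is genuinely of lower order than the leading profile; this is immediate because those limits are $o$ of exactly the decay rate appearing in the lower lines of \eqref{1-23}/\eqref{1-24}. It is this coincidence of exponents, guaranteed by $\tilde{\nu_0},\tilde{\nu_1}\neq0$, that upgrades the one-sided estimate \eqref{1-16} to the optimal two-sided relation \eqref{1-26}.
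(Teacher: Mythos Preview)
Your argument is correct and is exactly the approach the paper intends: the paper does not spell out a proof of Corollary~\ref{main.cor} but states just before it that the optimality follows from \eqref{1-19}, \eqref{1-20}, \eqref{1-23} and \eqref{1-24}, i.e.\ precisely the triangle-inequality decomposition $u-\chi=(u-\chi-Z)+Z$ (resp.\ with $Z+V$) that you wrote out. Your observation that the hypothesis $M\neq0$ is needed to activate the lower bounds in \eqref{1-23}--\eqref{1-24} is also correct; it is implicitly part of ``the same assumptions in Theorem~\ref{main2}''.
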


\begin{rem}
\rm{
The similar result for Theorem~\ref{main1} is obtained by Kitagawa \cite{Ki07} for the generalized Burgers equation. For Theorem~\ref{main2}, the author in \cite{F19-2} obtained the similar result for the generalized KdV-Burgers equation. 
}
\end{rem}

This paper is organized as follows. In Section 2, we introduce the global existence of solutions to \eqref{1-2} and prepare a couple of lemmas for an auxiliary problem. In Section 3, we drive the upper bound estimates of $u-\chi$, i.e. we prove Theorem~\ref{main1}. Finally, we give the proof of our second main result Theorem~\ref{main2}. The proof of Theorem~\ref{main2} is divided into two parts. We extract the second asymptotic profiles $Z(x,t)$ and $Z(x,t)+V(x,t)$ according to the decaying rate of the initial data in Section 4 and Section 5. It is the main novelty of this paper. 

\vskip10pt
\par\noindent
\textbf{\bf{Notations.}} In this paper, for $1\le p \le \infty$, $L^{p}(\R)$ denotes the usual Lebesgue spaces. In the following, for $f, g \in L^{2}(\R)\cap L^{1}(\R)$, we denote the Fourier transform of $f$ and the inverse Fourier transform of $g$ as follows:
\begin{align*}
\hat{f}(\xi)\equiv \mathcal{F}[f](\xi)=\frac{1}{\sqrt{2\pi}}\int_{\R}e^{-ix\xi}f(x)dx, \ \ \check{g}(x)\equiv \mathcal{F}^{-1}[g](x)=\frac{1}{\sqrt{2\pi}}\int_{\R}e^{ix\xi}g(\xi)d\xi.
\end{align*}
Then, for $s\ge0$, we define the Sobolev spaces by 
\begin{equation*}
H^{s}(\R)\equiv \biggl\{f\in L^{2}(\R); \ \|f\|_{H^{s}}\equiv \biggl(\int_{\R}(1+|\xi|^{2})^{s}|\hat{f}(\xi)|^{2}d\xi\biggl)^{1/2}<\infty \biggl\}. 
\end{equation*}
To express Sobolev spaces, for $1\le p\le \infty$, we also set 
\begin{equation*}
W^{m,p}(\R)\equiv \biggl\{ f\in L^{p}(\R); \ \|f\|_{W^{m,p}} \equiv \biggl(\sum_{n=0}^{m}\| \p_{x}^{n}f\|_{L^{p}}^{p}\biggl)^{1/p}<\infty \biggl\}.
\end{equation*}

Throughout this paper, $C$ denotes various positive constants, which may vary from line to line during computations. Also, it may depends on norm of the initial data or other parameters. However, we note that it does not depend on the space and time variable $x$ and $t$. Finally, for positive functions $f(t)$ and $g(t)$, we denote $f(t)\sim g(t)$ if there exist positive constants $c_{0}$ and $C_{0}$ independent of $t$ such that $c_{0}g(t)\le f(t)\le C_{0}g(t)$ holds. 

\section{Preliminaries}

In this section, we prepare a couple of lemmas to prove the main theorems. First, we shall mention the global existence and the decay estimates for the solutions to \eqref{1-2}. Now, we consider the initial value problem for the linear damped wave equation:
\begin{align}
\label{2-1}
\begin{split}
&u_{tt}-u_{xx}+u_{t}+au_{x}=0, \ x\in \R, \ t>0, \\
&u(x, 0)=u_{0}(x), \ u_{t}(x, 0)=u_{1}(x),\ x\in \R.
\end{split}
\end{align}
By taking the Fourier transform for \eqref{2-1}, it follows that 
\begin{equation*}
\hat{u}(\xi, t)=\hat{G}(\xi, t)(\hat{u}_{0}(\xi)+\hat{u}_{1}(\xi))+\p_{t}\hat{G}(\xi, t)\hat{u}_{0}(\xi), 
\end{equation*}
where
\begin{equation}
\label{2-2}
\hat{G}(\xi, t)\equiv\frac{1}{\lambda_{1}(\xi)-\lambda_{2}(\xi)}(e^{\lambda_{1}(\xi)t}-e^{\lambda_{2}(\xi)t}),
\end{equation}
\begin{equation*}
\lambda_{1}(\xi)\equiv\frac{1}{2}(-1+\sqrt{1-4(\xi^{2}+ai\xi)}), \ \ \lambda_{2}(\xi)\equiv\frac{1}{2}(-1-\sqrt{1-4(\xi^{2}+ai\xi)}).
\end{equation*}
Therefore, the solution of \eqref{2-1} can be expressed as follows:
\begin{equation*}
u(t)=G(t)*(u_{0}+u_{1})+\p_{t}G(t)*u_{0},
\end{equation*}
where we set 
\begin{equation}
\label{2-3}
G(x, t)\equiv \mathcal{F}^{-1}[\hat{G}(\cdot, t)](x).
\end{equation}
For this function $G(x, t)$, we can show the following decay estimate (for the proof, see Ueda and Kawashima \cite{UK07} and Kato and Ueda \cite{KU17}).
\begin{lem}\label{LE}
Let $1\le q\le p\le \infty$. Then the following $L^{p}$-$L^{q}$ estimates hold$:$ 
\begin{align}
\label{2-4}
\|G(t)* \phi\|_{L^{p}} &\le C(1+t)^{-\frac{1}{2}(\frac{1}{q}-\frac{1}{p})}\|\phi\|_{L^{q}}, \ \ t\ge0,\\
\label{2-5}
\|\p_{t}^{k}\p_{x}^{l}G(t)*\phi\|_{L^{p}}&\le C(1+t)^{-\frac{1}{2}(\frac{1}{q}-\frac{1}{p})-\frac{k+l}{2}}\|\phi\|_{L^{q}}+Ce^{-c_{0}t}\|\phi\|_{W^{k+l-1,p}}, \ \ t\ge0, 
\end{align} 
for $m+l\ge1$, where $G(x, t)$ and $G_{0}(x, t)$ are defined by \eqref{2-3} and \eqref{1-22}, respectively. Moreover, the solutions operator $G(t)*$ is approximated by $G_{0}(t)*$ in the following sense: 
\begin{align}
\label{2-6}
\|(G-G_{0})(t)* \phi\|_{L^{p}} &\le Ct^{-\frac{1}{2}(\frac{1}{q}-\frac{1}{p})}(1+t)^{-\frac{1}{2}}\|\phi\|_{L^{q}}, \ \ t>0,\\
\label{2-7}
\|\p_{t}^{k}\p_{x}^{l}(G-G_{0})(t)*\phi\|_{L^{p}}&\le Ct^{-\frac{1}{2}(\frac{1}{q}-\frac{1}{p})-\frac{k+l}{2}}(1+t)^{-\frac{1}{2}}\|\phi\|_{L^{q}}+Ce^{-c_{0}t}\|\phi\|_{W^{k+l-1,p}}, \ \ t>0, 
\end{align} 
for $k+l\ge1$. Here $c_{0}$ is a positive constant. 
\end{lem}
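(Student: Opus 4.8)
The plan is to work entirely on the Fourier side, starting from the explicit symbol $\hat{G}(\xi,t)$ in \eqref{2-2}, and to split the frequency axis with a smooth cutoff into a low-frequency region $\{|\xi|\le r_0\}$ and a high-frequency region $\{|\xi|\ge r_0\}$ for a small fixed $r_0>0$. Since the characteristic roots $\lambda_1,\lambda_2$ behave completely differently in the two regions (parabolic near $\xi=0$, oscillatory-dissipative for large $\xi$), all four estimates \eqref{2-4}--\eqref{2-7} will be assembled from the two pieces treated separately.

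First I would analyze the low-frequency part. Expanding the square root for $|\xi|\le r_0$ gives $\lambda_1(\xi)=-ai\xi-\mu\xi^2+O(|\xi|^3)$ with $\mathrm{Re}\,\lambda_1(\xi)\le -c\xi^2$, while $\lambda_2(\xi)=-1+O(\xi^2)$ so that $\mathrm{Re}\,\lambda_2(\xi)\le -c_0$, and $\lambda_1-\lambda_2=\sqrt{1-4(\xi^2+ai\xi)}\to1$. Hence on this region the $e^{\lambda_2 t}$ contribution is exponentially small and $\hat{G}$ is governed by $\frac{1}{\lambda_1-\lambda_2}e^{\lambda_1 t}$, a bounded perturbation of the heat symbol $\hat{G}_0(\xi,t)=e^{-(ai\xi+\mu\xi^2)t}$. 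To turn these symbol bounds into \eqref{2-4} for the full range $1\le q\le p\le\infty$, I would apply Young's inequality $\|K\ast\phi\|_{L^p}\le\|K\|_{L^r}\|\phi\|_{L^q}$ with $1+\frac1p=\frac1r+\frac1q$ and bound the $L^r$ norm of the low-frequency kernel $K_L(t)$ by $Ct^{-\frac12(1-\frac1r)}=Ct^{-\frac12(\frac1q-\frac1p)}$, exactly as for the Gaussian $G_0$. For \eqref{2-6}--\eqref{2-7} the key point is that the difference symbol $\frac{1}{\lambda_1-\lambda_2}e^{\lambda_1 t}-e^{\lambda_1^{(0)}t}$, with $\lambda_1^{(0)}=-(ai\xi+\mu\xi^2)$, carries an extra factor through $|\lambda_1-\lambda_1^{(0)}|=O(|\xi|^3)$ and $|e^{\lambda_1 t}-e^{\lambda_1^{(0)}t}|\le C|\xi|^3t\,e^{-c\xi^2 t}$; under the parabolic scaling $\xi=t^{-1/2}\zeta$ this extra factor becomes an overall $t^{-1/2}$ independent of $r$, which is precisely the $(1+t)^{-1/2}$ gain in the difference estimates.

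Next I would treat the high-frequency part, where both roots satisfy $\mathrm{Re}\,\lambda_j(\xi)\le -c_0$ uniformly, so $\hat{G}$ and its $\xi$-derivatives decay like $e^{-c_0 t}$. For $k=l=0$ the high-frequency operator is smoothing (its symbol decays like $|\xi|^{-1}$, since $|\lambda_j|\sim|\xi|$ while $\frac{1}{\lambda_1-\lambda_2}\sim|\xi|^{-1}$ there), so its exponential time decay is faster than any polynomial rate and it is absorbed into the main term, leaving no error in \eqref{2-4} and \eqref{2-6}. For $k+l\ge1$ the derivatives produce $\frac{\lambda_j^{k}(i\xi)^{l}}{\lambda_1-\lambda_2}\sim|\xi|^{k+l-1}$, i.e. a differential operator of order $k+l-1$; estimating it after extracting $e^{-c_0 t}$ costs exactly $k+l-1$ derivatives of $\phi$ and yields the error term $Ce^{-c_0 t}\|\phi\|_{W^{k+l-1,p}}$ in \eqref{2-5} and \eqref{2-7}. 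The high-frequency part of $G-G_0$ is handled identically, $\hat{G}_0$ being Schwartz in $\xi$.

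The main obstacle will be the endpoint cases $q=1$ and $p=\infty$ of the low-frequency estimates, where Young's inequality forces $r=\infty$ or $r=1$ and one must control $\|K_L(t)\|_{L^\infty}$ and, more delicately, $\|K_L(t)\|_{L^1}$ directly instead of through a convenient $L^2$ argument. The $L^\infty$ bound follows from $\|K_L(t)\|_{L^\infty}\le\|\hat{G}(\cdot,t)\psi\|_{L^1}\le Ct^{-1/2}$, but the uniform $L^1$ bound amounts to proving that $K_L(t)$ is genuinely heat-kernel-like; I would obtain it through weighted estimates, controlling $\|(1+|x|)K_L(t)\|_{L^\infty}$ by two $\xi$-derivatives of the low-frequency symbol. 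The same weighted control is what certifies that the extra $|\xi|$ factor in the difference symbol upgrades to an honest $t^{-1/2}$ gain in every $L^r$. Once these kernel bounds are in place, combining the two frequency regions and absorbing the small-$t$ behavior into $(1+t)$ for \eqref{2-4}--\eqref{2-5}, while retaining the sharp singularity $t^{-\frac12(\frac1q-\frac1p)}$ for \eqref{2-6}--\eqref{2-7}, produces all four estimates.
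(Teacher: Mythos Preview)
The paper does not actually prove this lemma; it simply refers the reader to Ueda--Kawashima \cite{UK07} and Kato--Ueda \cite{KU17}. Your outline is precisely the standard argument carried out in those references: a smooth low/high frequency decomposition, Taylor expansion of $\lambda_{1,2}$ near $\xi=0$ to identify the parabolic piece and extract the extra $t^{-1/2}$ from $|\lambda_1-\lambda_1^{(0)}|=O(|\xi|^3)$, and the uniform bound $\mathrm{Re}\,\lambda_j\le -c_0$ on the high-frequency region to produce the exponentially decaying error of order $k+l-1$. So your plan matches the intended proof.

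One small correction in your discussion of the endpoint $L^1$ kernel bound: controlling $\|(1+|x|)K_L(t)\|_{L^\infty}$ is \emph{not} enough to conclude $K_L(t)\in L^1(\R)$, since $(1+|x|)^{-1}\notin L^1(\R)$. What two $\xi$-derivatives of the (compactly supported, smooth) low-frequency symbol actually give you is $\|x^2K_L(t)\|_{L^\infty}\le C t^{1/2}$ with the appropriate parabolic scaling, which together with $\|K_L(t)\|_{L^\infty}\le Ct^{-1/2}$ yields $|K_L(x,t)|\le C t^{-1/2}(1+x^2/t)^{-1}$ and hence $\|K_L(t)\|_{L^1}\le C$. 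The same mechanism, applied to the difference symbol (which carries an extra $|\xi|^2$ after accounting for both $\tfrac{1}{\lambda_1-\lambda_2}-1=O(\xi^2)$ and $e^{\lambda_1 t}-e^{\lambda_1^{(0)}t}=O(|\xi|^3 t)e^{-c\xi^2 t}$), delivers the $t^{-1/2}$ gain in $L^1$ as well. With that adjustment, your argument goes through.
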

\noindent
Applying the Duhamel principle to \eqref{1-2}, we obtain  
\begin{equation}
\label{2-8}
u(t)=G(t)*(u_{0}+u_{1})+\p_{t}G(t)*u_{0}-\int_{0}^{t}G(t-\tau)*(g(u)_{x})(\tau)d\tau, 
\end{equation}
where $g(u)\equiv \frac{b}{2}u^{2}+\frac{c}{3!}u^{3}$. Therefore, by using Lemma~\ref{LE}, we obtain the global existence of the solutions to \eqref{1-2} as in the following proposition. The proof of it is given by a standard argument which is based on the contraction mapping principle (see e.g. Proposition 3.1 in \cite{KU17}):
\begin{prop}\label{GE}
Let $s$ be a positive integer and $1\le p\le \infty$. Suppose that $u_{0}\in W^{s, p}(\R)\cap L^{1}(\R)$, $u_{1}\in W^{s-1}(\R)\cap L^{1}(\R)$ and $E_{0}^{(s, p)}\equiv\|u_{0}\|_{W^{s, p}}+\|u_{0}\|_{L^{1}}+\|u_{1}\|_{W^{s-1, p}}+\|u_{1}\|_{L^{1}}$ is sufficiently small. Then $(1.2)$ has a unique global solution $u(x, t)$ with
\begin{equation*}
u\in \begin{cases}
\bigcap_{k=0}^{\sigma}C^{k}([0, \infty); W^{s-k, p})\cap C([0, \infty); L^{1}), &1\le p <\infty,\\
\bigcap_{k=0}^{\sigma}W^{k, \infty}(0, \infty; W^{s-k, \infty})\cap C([0, \infty); L^{1}), &p=\infty,\end{cases}
\end{equation*}
where $\sigma=\min\{2, s\}$. Moreover, the estimate
\begin{align}
\label{2-9}
\|u(\cdot, t)\|_{L^{q}}\le CE_{0}^{(1, p)}(1+t)^{-\frac{1}{2}+\frac{1}{2q}}
\end{align}
holds for any $q$ with $1\le q\le \infty$, and the estimate 
\begin{align}
\label{2-10}
\|\p_{t}^{k}\p_{x}^{l}u(\cdot, t)\|_{L^{p}}\le CE_{0}^{(s, p)}(1+t)^{-\frac{1}{2}+\frac{1}{2p}-\frac{k+l}{2}}
\end{align}
holds for $0\le k\le2$ and $l\ge 0$ with $0\le k+l\le s$.
\end{prop}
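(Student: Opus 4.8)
The plan is to realize $u$ as a fixed point of the Duhamel map
\[
\Phi[u](t)\equiv G(t)*(u_{0}+u_{1})+\p_{t}G(t)*u_{0}-\int_{0}^{t}G(t-\tau)*g(u)_{x}(\tau)\,d\tau
\]
in a Banach space whose norm encodes every decay rate asserted in \eqref{2-9}--\eqref{2-10}, and to run a contraction argument driven by the linear estimates of Lemma~\ref{LE}. Concretely I would work with
\[
|||u|||\equiv\sup_{t\ge0}\Big[\sup_{1\le q\le\infty}(1+t)^{\frac12-\frac{1}{2q}}\|u(t)\|_{L^{q}}+\sum_{\substack{0\le k\le\sigma \\ 1\le k+l\le s}}(1+t)^{\frac12-\frac{1}{2p}+\frac{k+l}{2}}\|\p_{t}^{k}\p_{x}^{l}u(t)\|_{L^{p}}\Big]
\]
and seek the fixed point in the closed ball $\{\,|||u|||\le 2C_{1}E_{0}^{(s,p)}\,\}$. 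Since the nonlinearity $g(u)=\tfrac{b}{2}u^{2}+\tfrac{c}{3!}u^{3}$ is at least quadratic, its Duhamel contribution will carry a factor $|||u|||^{2}+|||u|||^{3}$, so for $E_{0}^{(s,p)}$ small the estimate closes on all of $[0,\infty)$ at once and yields a global, not merely local, solution.

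First I would dispose of the two linear terms. Applying \eqref{2-4}--\eqref{2-5} with $\phi=u_{0}+u_{1}$ and $\phi=u_{0}$ and taking the source exponent equal to $1$ in the regular term, these terms obey the bound $(1+t)^{-\frac12+\frac1{2q}}\|(u_{0},u_{1})\|_{L^{1}}$ in $L^{q}$ for every $q\in[1,\infty]$, which is exactly \eqref{2-9} for the linear evolution; the derivative bounds \eqref{2-10} follow in the same way, the exponentially decaying term in \eqref{2-5} being absorbed using the $W^{s,p}$ and $W^{s-1,p}$ norms of the data. Altogether the linear part contributes $\le C_{1}E_{0}^{(s,p)}$ to $|||\cdot|||$, and this is the free part of the estimate.

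The substantial step is the Duhamel integral, which I would control purely through \eqref{2-4}--\eqref{2-5}; note that the damped-wave kernel estimate \eqref{2-5} carries the regular factor $(1+t)^{-\frac12(\frac1q-\frac1p)-\frac{k+l}{2}}$ with no singularity at $t=0$, so the only delicate feature is its second, exponentially decaying term, which demands source regularity. Writing $g(u)_{x}=(bu+\tfrac{c}{2}u^{2})u_{x}$ and combining the $L^{q}$-sup decay of $u$ with the $L^{p}$ decay of its derivatives by Hölder's inequality, the quadratic structure yields $\|g(u)_{x}(\tau)\|_{L^{1}}\lesssim(1+\tau)^{-1}(|||u|||^{2}+|||u|||^{3})$, the integrable rate needed to close the argument. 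I would split $\int_{0}^{t}=\int_{0}^{t/2}+\int_{t/2}^{t}$: on $[0,t/2]$ one moves the derivative onto $G$ (raising its decay order to $k+l+1$) and uses $\int_{0}^{t/2}(1+\tau)^{-1/2}d\tau\sim(1+t)^{1/2}$, which exactly restores the target weight $(1+t)^{-\frac12+\frac1{2p}-\frac{k+l}{2}}$, while on $[t/2,t]$ one uses $1+\tau\sim1+t$ together with the now integrable regular kernel and the exponentially small tail; the same computation with target $L^{q}$ and source $L^{1}$ reproduces \eqref{2-9} uniformly in $q$. The real obstacle lies at the top derivative order $k+l=s$, where the exponential term forces a bound on $\|g(u)(\tau)\|_{W^{s,p}}$: this is handled by a Moser-type product estimate $\|\p_{x}^{s}(u^{m})\|_{L^{p}}\lesssim\|u\|_{L^{\infty}}^{m-1}\|\p_{x}^{s}u\|_{L^{p}}$, after which one must check that the resulting weights still sit strictly below the target. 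Everything else is bookkeeping.

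Finally, applying the very same kernel estimates to the difference through
\[
g(u)-g(\tilde u)=\Big(\tfrac{b}{2}(u+\tilde u)+\tfrac{c}{3!}(u^{2}+u\tilde u+\tilde u^{2})\Big)(u-\tilde u)
\]
shows that $\Phi$ is a contraction, $|||\Phi[u]-\Phi[\tilde u]|||\le\tfrac12|||u-\tilde u|||$, once $E_{0}^{(s,p)}$ is sufficiently small. Banach's fixed point theorem then furnishes a unique global solution with $|||u|||\le 2C_{1}E_{0}^{(s,p)}$, from which \eqref{2-9} and \eqref{2-10} are read off directly. The asserted time-regularity class follows from the standard observation that each term of \eqref{2-8} depends continuously (respectively $C^{k}$ for $k\le\sigma$) on $t$ with values in the indicated $W^{s-k,p}$ and $L^{1}$ spaces, the time derivatives being computed from \eqref{2-2} and the equation itself.
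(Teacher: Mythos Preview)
Your proposal is correct and is exactly the approach the paper has in mind: the paper does not give a proof of this proposition at all, but only says that it ``is given by a standard argument which is based on the contraction mapping principle (see e.g.\ Proposition~3.1 in~\cite{KU17}).'' Your Duhamel fixed-point scheme built on the linear estimates of Lemma~\ref{LE}, with the weighted norm encoding \eqref{2-9}--\eqref{2-10} and the $[0,t/2]$--$[t/2,t]$ splitting, is precisely that standard argument.
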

\noindent

Next, we treat the nonlinear diffusion wave $\chi(x, t)$ defined by \eqref{1-4}, and the heat kernel $G_{0}(x, t)$ defined by \eqref{1-22}. For $\chi(x, t)$, it is easy to see that 
\begin{equation}
\label{2-11}
|\chi(x, t)| \le C|M|(1+t)^{-\frac{1}{2}}e^{-\frac{(x-at)^{2}}{4\mu(1+t)}}, \ x\in \R, \ t\ge0.
\end{equation}
Moreover, $\chi(x, t)$ satisfies the following $L^{p}$-decay estimate (for the proof, see Lemma 4.3 in~\cite{KU17}).
\begin{lem}\label{chi.decay}
Let $k$, $l$ and $m$ be non-negative integers. Then, for $|M|\le1$ and $p\in[1, \infty]$, we have
\begin{equation}
\label{2-12}
\| \p_{t}^{k}\p_{x}^{l}(\p_{t}+a\p_{x})^{m}\chi(\cdot, t)\|_{L^{p}}\le C|M|(1+t)^{-\frac{1}{2}+\frac{1}{2p}-\frac{k+l+2m}{2}}, \ \ t\ge0.
\end{equation}
\end{lem}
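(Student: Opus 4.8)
The plan is to exploit the exact self-similar structure of $\chi$ together with the exponential decay of its profile $\chi_{*}$. Introduce the similarity variable $\xi\equiv(x-a(1+t))/\sqrt{1+t}$, so that $\chi(x,t)=(1+t)^{-1/2}\chi_{*}(\xi)$. First I would record the two scaling identities $\p_x\xi=(1+t)^{-1/2}$ and
\[
(\p_t+a\p_x)\xi=-\frac{\xi}{2(1+t)}.
\]
The second identity is the crucial cancellation: the convective derivative annihilates the transport contribution $a(1+t)$ and leaves only the self-similar dilation. Consequently each factor $\p_x$ improves the temporal decay by $(1+t)^{-1/2}$, while each factor $\p_t+a\p_x$ improves it by the full power $(1+t)^{-1}$, which is exactly what the exponent $-(k+l+2m)/2$ in \eqref{2-12} encodes.

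Next I would prove by induction that
\[
(\p_t+a\p_x)^{m'}\p_x^{l'}\chi(x,t)=(1+t)^{-\frac12-m'-\frac{l'}{2}}\,P_{m',l'}(\xi),
\]
where each $P_{m',l'}$ is a finite linear combination of terms $\xi^{j}\chi_{*}^{(i)}(\xi)$. The base case is $P_{0,0}=\chi_{*}$; applying $\p_x$ sends $P\mapsto P'$ and raises $l'$ by one, while applying $\p_t+a\p_x$ sends $P\mapsto-(\tfrac12+m'+\tfrac{l'}{2})P-\tfrac{\xi}{2}P'$ and raises $m'$ by one, both via the two identities above. To handle the mixed operator $\p_t^k\p_x^l(\p_t+a\p_x)^m$ appearing in the statement, I would write $\p_t=(\p_t+a\p_x)-a\p_x$ and expand $\p_t^k$ into a sum of commuting operators $(\p_t+a\p_x)^{i}\p_x^{j}$ with $i+j=k$; the whole operator then reduces to a finite sum of terms $(\p_t+a\p_x)^{m'}\p_x^{l'}$ with $m'=m+i$ and $l'=l+j$. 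Each such term decays like $(1+t)^{-1/2-m'-l'/2}$, and since $2m'+l'=k+l+2m+i\ge k+l+2m$, every term is at least as good as the claimed rate, the slowest being the one with $i=0$.

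The remaining ingredient is the $L^p$ control of the profiles. Changing variables $x\mapsto\xi$ contributes a Jacobian $(1+t)^{1/2}$, so $\|P_{m',l'}(\xi)\|_{L^p(dx)}=(1+t)^{1/(2p)}\|P_{m',l'}\|_{L^p(d\xi)}$, producing the factor $(1+t)^{1/(2p)}$ and completing the exponent. To finish I must bound $\|P_{m',l'}\|_{L^p(d\xi)}$ by $C|M|$ uniformly for $|M|\le1$. For this I would return to the explicit formula \eqref{1-5}: writing $A\equiv e^{bM/2\mu}-1$ and $D(x)\equiv\sqrt{\pi}+A\int_{x/\sqrt{4\mu}}^{\infty}e^{-y^2}dy$, one checks that $D$ is bounded above and below by positive constants uniformly for $|M|\le1$ (indeed $D$ lies between $\sqrt{\pi}\,e^{bM/2\mu}$ and $\sqrt{\pi}$), while $|A|\le C|M|$. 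Since $\chi_{*}=\tfrac{\sqrt{\mu}}{b}A\,e^{-x^2/4\mu}/D$ and $D'$ is itself Gaussian, every derivative $\chi_{*}^{(i)}$ is bounded by $C|M|$ times a polynomial multiple of $e^{-x^2/4\mu}$; hence each $\xi^{j}\chi_{*}^{(i)}(\xi)$ is dominated by $C|M|e^{-c\xi^2}$ and lies in $L^p$ for all $p\in[1,\infty]$ with norm at most $C|M|$.

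I expect the main obstacle to be the bookkeeping in the second step rather than any single hard estimate: one must verify that reducing $\p_t^k$ to convective and spatial derivatives never produces a term decaying more slowly than $(1+t)^{-1/2+1/(2p)-(k+l+2m)/2}$, and this hinges entirely on the exact cancellation $(\p_t+a\p_x)\xi=-\xi/(2(1+t))$. Establishing the uniform positivity of the denominator $D$ for $|M|\le1$ is the one place where the explicit form of $\chi_{*}$ genuinely enters; everything else is driven purely by scaling.
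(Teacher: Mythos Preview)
Your argument is correct. The two scaling identities you record are accurate, the inductive formula $(\p_t+a\p_x)^{m'}\p_x^{l'}\chi=(1+t)^{-\frac12-m'-\frac{l'}{2}}P_{m',l'}(\xi)$ holds with the recursion you describe, and the binomial expansion of $\p_t^k=((\p_t+a\p_x)-a\p_x)^k$ shows that the worst term ($i=0$) gives exactly the claimed exponent while all other terms decay faster. Your analysis of the profile---writing $\chi_*=\tfrac{\sqrt\mu}{b}A\,e^{-x^2/4\mu}/D$ with $|A|\le C|M|$ and $D$ uniformly bounded above and below for $|M|\le1$---correctly yields $|\xi^j\chi_*^{(i)}(\xi)|\le C|M|e^{-c\xi^2}$, and the Jacobian factor $(1+t)^{1/(2p)}$ from the change of variables $x\mapsto\xi$ closes the estimate.

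There is nothing to compare against in the present paper: the authors do not prove this lemma but cite Lemma~4.3 of \cite{KU17}. Your self-similarity argument is the standard route for such estimates and is almost certainly what the cited reference does; the only point worth double-checking against \cite{KU17} is whether they also handle the $|M|$-dependence via the explicit factor $A=e^{bM/2\mu}-1$ or instead via differentiation in $M$ at $M=0$, but either way your version is complete.
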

\noindent
On the other hand, we have the following estimates for $G_{0}(x, t)$:
\begin{lem}\label{heat.decay}
Let $k$ and $l$ be nonnegative integers. Then, for $p\in [1, \infty]$, we have
\begin{equation}
\label{2-13}
\| \p_{t}^{k}\p_{x}^{l}G_{0}(\cdot, t)\|_{L^{p}}\le Ct^{-\frac{1}{2}+\frac{1}{2p}-\frac{k+l}{2}}, \ \ t>0.
\end{equation}
Moreover, if $\int_{\R}\phi(x)dx=0$ and 
\begin{equation}
\label{2-14}
\exists \gamma>1, \ \ \exists C>0 \ \ s.t. \ \ |\phi(x)|\le C(1+|x|)^{-\gamma}, \ \ x\in \R,
\end{equation}
then we have
\begin{align}
\label{2-15}
\|\p_{t}^{k}\p_{x}^{l}G_{0}(t)*\phi\|_{L^{p}}\le C\begin{cases}
t^{-\frac{\gamma}{2}+\frac{1}{2p}-\frac{k+l}{2}}, &t>0, \ 1<\gamma<2,\\
t^{-1+\frac{1}{2p}-\frac{k+l}{2}}\log(2+t), &t>0, \ \gamma=2.
\end{cases}
\end{align}
\end{lem}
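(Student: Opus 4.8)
The plan is to prove the two assertions separately, deriving the second (harder) one from the mean-zero hypothesis $\int_\R\phi\,dx=0$. Throughout I would rely on the pointwise Gaussian bounds $|\p_x^m G_0(x,t)|\le Ct^{-(m+1)/2}e^{-c(x-at)^2/t}$, obtained by differentiating the explicit formula \eqref{1-22}, together with the drift heat equation $\p_t G_0=\mu\p_x^2 G_0-a\p_x G_0$. The latter lets me replace each time derivative by two spatial derivatives, so that $\p_t^k\p_x^l G_0$ is a finite linear combination of $\p_x^m G_0$ with $k+l\le m\le 2k+l$; since higher spatial derivatives decay faster, the slowest-decaying term is the one with $m=k+l$, and it governs all the claimed rates. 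A further simplification is the shift $x\mapsto x+at$: since $G_0(x,t)=K_t(x-at)$ with $K_t(z)=(4\pi\mu t)^{-1/2}e^{-z^2/(4\mu t)}$ and the $L^p$ norm is translation invariant, it suffices to prove every estimate for the centered heat kernel $K_t$ and its derivatives $K_t^{(m)}$.

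For the first estimate this reduces to bounding $\|K_t^{(m)}\|_{L^p}$, which I would compute directly by the self-similar substitution $z=\sqrt t\,w$; this turns the integral into $t^{-(m+1)/2+1/(2p)}$ times a fixed convergent integral, yielding \eqref{2-13}. This part is entirely routine.

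For the second estimate the quantity to control is $\|K_t^{(m)}*\phi\|_{L^p}$ with $m=k+l$. Writing $F(\xi)=\int_\R K_t^{(m)}(\xi-y)\phi(y)\,dy$ and subtracting $K_t^{(m)}(\xi)\int_\R\phi\,dy=0$, I would use $F(\xi)=\int_\R[K_t^{(m)}(\xi-y)-K_t^{(m)}(\xi)]\phi(y)\,dy$ and split the $\xi$-axis into a near field $|\xi|\le\sqrt t$ and a far field $|\xi|>\sqrt t$. In each region I would further split the $y$-integral: where $|y|$ is small relative to the distance to the Gaussian center I bound the kernel difference by the mean value theorem, $|K_t^{(m)}(\xi-y)-K_t^{(m)}(\xi)|\le|y|\sup_{\theta\in[0,1]}|K_t^{(m+1)}(\xi-\theta y)|$, while where $|y|$ is large I bound it crudely by $K_t^{(m)}(\xi-y)+K_t^{(m)}(\xi)$ and invoke $|\phi(y)|\le C(1+|y|)^{-\gamma}$. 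After the rescaling $y=\sqrt t\,w$ (or $\xi=\sqrt t\,w$), the mean value contribution reduces to integrals of the form $\int e^{-cw^2}(1+\sqrt t|w|)^{1-\gamma}\,dw$, whose behavior near $w=0$ is exactly what separates the two cases: for $1<\gamma<2$ the singularity $|w|^{1-\gamma}$ is integrable and one gains the factor $t^{1-\gamma/2}$, whereas for $\gamma=2$ the borderline $|w|^{-1}$ produces the logarithm $\log(2+t)$.

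The main obstacle I anticipate is making these bounds uniform in $p\in[1,\infty]$, in particular controlling the far-field $L^p$-norm for $p$ close to $1$. It is tempting to integrate by parts once, replacing $\phi$ by its primitive $\Phi(x)=\int_{-\infty}^x\phi$, which satisfies $|\Phi(x)|\le C(1+|x|)^{1-\gamma}$; this gives the $L^\infty$ rate cleanly, but the tail $(1+|x|)^{1-\gamma}$ fails to lie in $L^p$ for small $p$, so this device alone is too lossy. The remedy is to keep the full decay $(1+|y|)^{-\gamma}$ of $\phi$ itself in the far-field, large-$y$ region; since $\gamma p>1$ for every $p\ge1$, the resulting tail is $p$-integrable, and after the scaling each of the near-field, far-field Gaussian, and far-field polynomial pieces is seen to contribute precisely $t^{-\gamma/2+1/(2p)-m/2}$ (with the extra $\log(2+t)$ when $\gamma=2$). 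Assembling these pieces and summing over $m$ from $k+l$ to $2k+l$ then yields \eqref{2-15}.
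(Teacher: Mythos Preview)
Your proposal is essentially correct and shares the paper's core ideas---subtract $K_t^{(m)}(\xi)\int\phi=0$, split according to the size of $|y|$ relative to $\sqrt t$, use the mean value theorem on the small-$|y|$ piece and the crude bound $|\phi(y)|\le C(1+|y|)^{-\gamma}$ on the large-$|y|$ piece---but it is considerably more elaborate than the paper's argument.  The paper does \emph{not} split the $\xi$-axis at all: it writes
\[
\p_t^k\p_x^l G_0(t)*\phi(x)=\Bigl(\int_{|y|\ge\sqrt t}+\int_{|y|\le\sqrt t}\Bigr)\p_t^k\p_x^l\bigl(G_0(x-y,t)-G_0(x,t)\bigr)\phi(y)\,dy=:I_1+I_2,
\]
and then immediately applies Minkowski's inequality $\|\int(\cdots)dy\|_{L^p_x}\le\int\|\cdots\|_{L^p_x}dy$ to each piece.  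For $I_1$ this gives $\|I_1\|_{L^p}\le 2\|\p_t^k\p_x^lG_0(\cdot,t)\|_{L^p}\int_{|y|\ge\sqrt t}(1+|y|)^{-\gamma}dy$, and \eqref{2-13} plus the tail integral yield the desired rate.  For $I_2$ the mean value theorem gives $I_2(x,t)=-\int_{|y|\le\sqrt t}(\p_t^k\p_x^{l+1}G_0)(x-\theta y,t)\,y\,\phi(y)\,dy$, and Minkowski again reduces everything to \eqref{2-13} times $\int_0^{\sqrt t}(1+y)^{1-\gamma}dy$, which is $O(t^{1-\gamma/2})$ for $1<\gamma<2$ and $O(\log(2+t))$ for $\gamma=2$.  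Because the $L^p$ norm enters only through \eqref{2-13}, uniformity in $p\in[1,\infty]$ is automatic; your anticipated obstacle about $p$ near $1$, and the detour through the primitive $\Phi$, are artifacts of the unnecessary $\xi$-splitting and pointwise analysis.  Dropping that layer recovers exactly the paper's two-line-per-region proof.
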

\begin{proof}
We shall prove only \eqref{2-15} because \eqref{2-13} can be shown easily. Since $\int_{\R}\phi(x)dx=0$, we see that 
\begin{align}
\label{2-16}
\begin{split}
\p_{t}^{k}\p_{x}^{l}G_{0}(t)*\phi(x)&=\int_{\R}\p_{t}^{k}\p_{x}^{l}G_{0}(x-y, t)\phi(y)dy-\biggl(\int_{\R}\phi(y)dy\biggl)\p_{t}^{k}\p_{x}^{l}G_{0}(x, t) \\
&=\int_{\R}\p_{t}^{k}\p_{x}^{l}(G_{0}(x-y, t)-G_{0}(x, t))\phi(y)dy\\
&=\biggl(\int_{|y|\ge\sqrt{t}}+\int_{|y|\le\sqrt{t}}\biggl)\p_{t}^{k}\p_{x}^{l}(G_{0}(x-y, t)-G_{0}(x, t))\phi(y)dy\\
&\equiv I_{1}(x, t)+I_{2}(x, t).
\end{split}
\end{align}
Then, by using \eqref{2-13} and \eqref{2-14}, we have
\begin{align}
\label{2-17}
\begin{split}
\|I_{1}(\cdot, t)\|_{L^{p}}&\le \biggl(\int_{-\infty}^{-\sqrt{t}}+\int_{\sqrt{t}}^{\infty}\biggl)\|\p_{t}^{k}\p_{x}^{l}(G_{0}(\cdot-y, t)-G_{0}(\cdot, t))\|_{L^{p}}(1+|y|)^{-\gamma}dy\\
&\le Ct^{-\frac{1}{2}+\frac{1}{2p}-\frac{k+l}{2}}\biggl(\int_{-\infty}^{-\sqrt{t}}(1-y)^{-\gamma}dy+\int_{\sqrt{t}}^{\infty}(1+y)^{-\gamma}dy\biggl)\\
&=2Ct^{-\frac{1}{2}+\frac{1}{2p}-\frac{k+l}{2}}\biggl[\frac{1}{1-\gamma}(1+y)^{1-\gamma}\biggl]_{\sqrt{t}}^{\infty}\\
&\le Ct^{-\frac{1}{2}+\frac{1}{2p}-\frac{k+l}{2}-\frac{\gamma-1}{2}}=Ct^{-\frac{\gamma}{2}+\frac{1}{2p}-\frac{k+l}{2}}. 
\end{split}
\end{align}
On the other hand, from the mean value theorem, there exists $\theta \in (0, 1)$ such that 
\begin{equation*}
I_{2}(x, t)=\int_{|y|\le \sqrt{t}}(\p_{t}^{k}\p_{x}^{l+1}G_{0})(x-\theta y, t)(-y)\phi(y)dy.
\end{equation*}
Therefore, we obtain from \eqref{2-13} and \eqref{2-14}
\begin{align}
\label{2-18}
\begin{split}
\|I_{2}(\cdot, t)\|_{L^{p}}&\le \int_{|y|\le \sqrt{t}}\|(\p_{t}^{k}\p_{x}^{l+1}G_{0})(\cdot-\theta y, t)\|_{L^{p}}(1+|y|)^{-(\gamma-1)}dy\\
&\le Ct^{-1+\frac{1}{2p}-\frac{k+l}{2}}\int_{0}^{\sqrt{t}}(1+y)^{-(\gamma-1)}dy\equiv Ct^{-1+\frac{1}{2p}-\frac{k+l}{2}}J(t).
\end{split}
\end{align}
For $J(t)$, we can easily show
\begin{equation}
\label{2-19}
J(t)\begin{cases}
\le&\d \int_{0}^{\sqrt{t}}y^{-(\gamma-1)}dy=\biggl[\frac{1}{2-\gamma}y^{2-\gamma}\biggl]_{0}^{\sqrt{t}}=\frac{1}{2-\gamma}t^{1-\frac{\gamma}{2}}, \ 1<\gamma<2, \\
=&\d \int_{0}^{\sqrt{t}}(1+y)^{-1}dy=\log(1+\sqrt{t})\le \log(2+t), \ \gamma=2.
\end{cases}
\end{equation}
Thus, from \eqref{2-18} and \eqref{2-19} we have 
\begin{align}
\label{2-20}
\|I_{2}(\cdot, t)\|_{L^{p}}\le C\begin{cases}
t^{-\frac{\gamma}{2}+\frac{1}{2p}-\frac{k+l}{2}}, &t>0, \ 1<\gamma<2,\\
t^{-1+\frac{1}{2p}-\frac{k+l}{2}}\log(2+t), &t>0, \ \gamma=2.
\end{cases}
\end{align}
Combining \eqref{2-16}, \eqref{2-17} and \eqref{2-20}, we obtain \eqref{2-15}. 
\end{proof}

In the rest of this section, let us prepare the ingredients to prove Theorem~\ref{main2}. First, we consider the function $\eta(x, t)$ defined by \eqref{1-22} and an auxiliary problem. First, for the function $\eta(x, t)$, we can easily obtain that  
\begin{align}
\label{2-21}
&\min \{1, e^{\frac{bM}{2\mu}}\} \le \eta(x, t) \le \max \{1, e^{\frac{bM}{2\mu}}\}, \\ 
\label{2-22}
&\min \{1, e^{-\frac{bM}{2\mu}}\} \le \eta(x, t)^{-1} \le \max \{1, e^{-\frac{bM}{2\mu}}\}.
\end{align}
Moreover, by using Lemma~\ref{chi.decay}, we have the following $L^{p}$-decay estimate (for the proof, see Corollary~2.3 in~\cite{Ka07} or Lemma~5.4 in~\cite{KU17}).
\begin{lem}\label{eta.decay}
Let $l$ be a positive integer and $p\in[1, \infty]$. If $|M| \le1$, then we have 
\begin{align}
\label{2-23}
\| \p^{l}_{x}\eta(\cdot, t)\|_{L^{p}}+\| \p^{l}_{x}(\eta(\cdot, t)^{-1})\|_{L^{p}}&\le C|M|(1+t)^{-\frac{1}{2}(1-\frac{1}{p})-\frac{l}{2}+\frac{1}{2}}, \ \ t\ge0.
\end{align}
\end{lem}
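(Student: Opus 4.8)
The plan is to reduce everything to the decay estimates for $\chi$ already established in Lemma~\ref{chi.decay}, exploiting that $\eta$ and $\eta^{-1}$ are merely exponentials of a primitive of $\chi$. Writing $\phi(x,t)\equiv \frac{b}{2\mu}\int_{-\infty}^{x}\chi(y,t)\,dy$, we have $\eta=e^{\phi}$ and $\eta^{-1}=e^{-\phi}$, so the two terms in \eqref{2-23} are handled by an identical argument (replacing $\phi$ by $-\phi$). The bounds \eqref{2-21} and \eqref{2-22} guarantee that $\eta$ and $\eta^{-1}$ are bounded above by a constant uniformly in $x$ and $t$, which lets me discard the exponential prefactor after differentiation.

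Next I would differentiate. Since $\p_{x}\phi=\frac{b}{2\mu}\chi$, and hence $\p_{x}^{k}\phi=\frac{b}{2\mu}\p_{x}^{k-1}\chi$ for $k\ge1$, the Fa\`a di Bruno formula expresses $\p_{x}^{l}\eta$ as $\eta$ times a finite sum, with combinatorial constants depending only on $l$, of products of the form $\prod_{k\ge1}(\p_{x}^{k-1}\chi)^{j_{k}}$ subject to $\sum_{k\ge1}k\,j_{k}=l$. Setting $n\equiv\sum_{k\ge1}j_{k}\ge1$ for the number of factors, the total number of $x$-derivatives distributed among those factors is $\sum_{k\ge1}(k-1)j_{k}=l-n$.

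The main estimate is then obtained by a generalized H\"older inequality. For a fixed product with $n$ factors I would take all the H\"older exponents equal to $np$ (so that their reciprocals sum to $\frac{1}{p}$, with $np=\infty$ when $p=\infty$) and apply Lemma~\ref{chi.decay} to each factor $\p_{x}^{m}\chi$, whose $L^{np}$-norm is bounded by $C|M|(1+t)^{-\frac{1}{2}+\frac{1}{2np}-\frac{m}{2}}$. Multiplying the $n$ bounds, the powers of $(1+t)$ combine to $-\frac{n}{2}+\frac{1}{2p}-\frac{l-n}{2}=\frac{1}{2p}-\frac{l}{2}$, so the $n$-dependence cancels exactly; moreover $|M|^{n}\le|M|$ because $|M|\le1$ and $n\ge1$. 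Summing the finitely many products and using boundedness of $\eta$ yields $\|\p_{x}^{l}\eta(\cdot,t)\|_{L^{p}}\le C|M|(1+t)^{\frac{1}{2p}-\frac{l}{2}}$, which is precisely the right-hand side of \eqref{2-23} after rewriting $-\frac{1}{2}(1-\frac{1}{p})-\frac{l}{2}+\frac{1}{2}=\frac{1}{2p}-\frac{l}{2}$.

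The only delicate point is the bookkeeping in the Fa\`a di Bruno expansion and the verification that the exponents of $(1+t)$ really cancel the number of factors $n$; once the constraint $\sum k\,j_{k}=l$ is used, this cancellation is automatic, so no genuine obstacle arises. An equivalent route, avoiding H\"older, is to use the self-similar structure $\p_{x}^{m}\chi(x,t)=(1+t)^{-(m+1)/2}\chi_{*}^{(m)}((x-a(1+t))/\sqrt{1+t})$ together with the Gaussian decay of $\chi_{*}$ and its derivatives (cf.\ \eqref{2-11}) to bound each product pointwise by $C|M|(1+t)^{-l/2}e^{-c(x-a(1+t))^{2}/(1+t)}$ and then integrate in $x$; I expect the H\"older version to be cleaner to write out.
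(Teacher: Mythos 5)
Your argument is correct: the Faà di Bruno expansion of $\p_x^l e^{\phi}$ combined with the generalized H\"older inequality with exponents $np$ makes the powers of $(1+t)$ cancel exactly as you compute, and $|M|^n\le|M|$ handles the constant. The paper itself omits the proof, deferring to Corollary~2.3 of \cite{Ka07} and Lemma~5.4 of \cite{KU17}, but the argument there is essentially the one you give — differentiate the exponential and reduce everything to the decay estimate \eqref{2-12} for $\chi$ — so this is the same approach.
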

\noindent
In the proof of Theorem~\ref{main2}, we examine the second asymptotic profile of the solution to \eqref{1-2}. To analyze the second asymptotic profile, we set $\psi \equiv u+u_{t}-\chi$. Recalling $\mu=1-a^{2}$, the perturbation $\psi(x, t)$ satisfies the following equation: 
\begin{align*}
\begin{split}
&\psi_{t}+a\psi_{x}+(b\chi \psi)_{x}-\mu \psi_{xx}\\
&=a\p_{x}(\p_{t}+a\p_{x})(u-\chi)+a\p_{x}(\p_{t}+a\p_{x})\chi-\frac{b}{2}\p_{x}((u-\chi)^{2})-\frac{c}{3!}\p_{x}(u^{3})+\p_{x}(b\chi u_{t}-\mu u_{tx}).
\end{split}
\end{align*}
To analyze the above equation, we prepare the following auxiliary problem:
\begin{align}
\label{2-24}
\begin{split}
z_{t}+az_{x}+(b\chi z)_{x}-\mu z_{xx}&=\p_{x}\lambda(x, t), \ x\in \R, \ t>0, \\
z(x, 0)&=z_{0}(x) , \ \ x\in \R, 
\end{split}
\end{align}
where $\lambda(x, t)$ is a given regular function decaying at spatial infinity. If we take the new valuable $\tilde{x}\equiv x-at$, and set $\tilde{z}(\tilde{x}, t)\equiv z(x, t)$, $\tilde{\chi}(\tilde{x}, t)\equiv \chi(x, t)$, $\tilde{\lambda}(\tilde{x}, t)\equiv \lambda(x, t)$ and $\tilde{z}_{0}(\tilde{x})\equiv z_{0}(\tilde{x})$, then \eqref{2-24} can be rewritten as follows: 
\begin{align}
\label{2-25}
\begin{split}
\tilde{z}_{t}+(b\tilde{\chi} \tilde{z})_{\tilde{x}}-\mu \tilde{z}_{\tilde{x}\tilde{x}}&=\p_{\tilde{x}}\tilde{\lambda}(\tilde{x}, t), \ \tilde{x}\in \R, \ t>0, \\
\tilde{z}(\tilde{x}, 0)&=\tilde{z}_{0}(\tilde{x}) , \ \ \tilde{x}\in \R. 
\end{split}
\end{align}
Therefore, if we set 
\begin{align}
\label{2-26}
\begin{split}
U[h](x, t, \tau)\equiv\int_{\R}\p_{x}(G_{0}(x-y, t-\tau)\eta(x, t))(\eta(y, \tau))^{-1}\biggl(\int_{-\infty}^{y}h(\xi)d\xi\biggl)dy&,\\
x\in \R, \ 0\le \tau<t,& 
\end{split}
\end{align}
then, applying Lemma 3.3 in~\cite{F19-1} or Lemma 3.1 in~\cite{Ka07} to \eqref{2-25}, we can deduce the following representation formula for \eqref{2-24}: 
\begin{lem}\label{RF}
Let $z_{0}(x)$ be a sufficiently regular function decaying at spatial infinity. Then we can get the smooth solution of  \eqref{2-24} which satisfies the following formula:  
\begin{equation}
\label{2-27}
z(x, t)=U[z_{0}](x, t, 0)+\int_{0}^{t}U[\p_{x}\lambda(\tau)](x, t, \tau)d\tau, \ x\in \R, \ t>0.
\end{equation}
\end{lem}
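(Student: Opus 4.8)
The plan is to linearize the variable-coefficient problem \eqref{2-25} to a pure heat equation by a Cole--Hopf type substitution, solve that by Duhamel's formula, and then unwind the substitution to recognize the operator $U$. Since \eqref{2-24} and \eqref{2-25} are equivalent under $\tilde{x}=x-at$, it suffices to produce the representation in the moving frame and return to $x$ at the end; throughout I use $\tilde{\eta}(\tilde{x}, t)\equiv \eta(x, t)$, consistent with the tilde convention already fixed for $\tilde{z}$, $\tilde{\chi}$ and $\tilde{\lambda}$.

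First I would pass to the spatial primitive. Setting $W(\tilde{x}, t)\equiv \int_{-\infty}^{\tilde{x}}\tilde{z}(\xi, t)\, d\xi$ and integrating \eqref{2-25} from $-\infty$ to $\tilde{x}$, the conservation form collapses and, using the decay of $\tilde{z}$, $\tilde{\chi}$ and $\tilde{\lambda}$ at spatial infinity to drop the boundary terms, $W$ solves
\begin{equation*}
W_{t}+b\tilde{\chi}W_{\tilde{x}}-\mu W_{\tilde{x}\tilde{x}}=\tilde{\lambda}, \qquad W(\tilde{x}, 0)=\int_{-\infty}^{\tilde{x}}\tilde{z}_{0}(\xi)\, d\xi.
\end{equation*}

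The crux is the substitution $W=\tilde{\eta}w$. From \eqref{1-22} one has the Cole--Hopf identity $\tilde{\eta}_{\tilde{x}}/\tilde{\eta}=b\tilde{\chi}/(2\mu)$, while \eqref{1-6} shows that $\tilde{\chi}$ satisfies the Burgers equation $\tilde{\chi}_{t}+b\tilde{\chi}\tilde{\chi}_{\tilde{x}}=\mu\tilde{\chi}_{\tilde{x}\tilde{x}}$ in the moving frame, whence $\int_{-\infty}^{\tilde{x}}\tilde{\chi}_{t}\, d\xi=\mu\tilde{\chi}_{\tilde{x}}-\frac{b}{2}\tilde{\chi}^{2}$. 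Inserting $W=\tilde{\eta}w$ into the equation for $W$ and using these two facts, a direct computation shows that the first-order coefficient $b\tilde{\chi}\tilde{\eta}-2\mu\tilde{\eta}_{\tilde{x}}$ and the zeroth-order coefficient $\tilde{\eta}_{t}+b\tilde{\chi}\tilde{\eta}_{\tilde{x}}-\mu\tilde{\eta}_{\tilde{x}\tilde{x}}$ both vanish identically, so that $w$ solves the pure heat equation $w_{t}-\mu w_{\tilde{x}\tilde{x}}=\tilde{\eta}^{-1}\tilde{\lambda}$ with data $w(\tilde{x}, 0)=\tilde{\eta}(\tilde{x}, 0)^{-1}W(\tilde{x}, 0)$. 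This double cancellation is the heart of the argument and the step I expect to require the most care.

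Finally I would express $w$ by Duhamel's formula with the heat kernel $\Phi(\tilde{x}, t)=(4\pi\mu t)^{-1/2}e^{-\tilde{x}^{2}/(4\mu t)}$, noting $G_{0}(x, t)=\Phi(x-at, t)$ so that $G_{0}(x-y, t-\tau)=\Phi(\tilde{x}-\tilde{y}, t-\tau)$ after the frame shift, and then recover $\tilde{z}=\p_{\tilde{x}}(\tilde{\eta}w)$. Carrying $\p_{\tilde{x}}$ and the factor $\tilde{\eta}(\tilde{x}, t)$ inside the spatial integrals produces the kernel $\p_{\tilde{x}}(\Phi(\tilde{x}-\tilde{y}, t-\tau)\tilde{\eta}(\tilde{x}, t))$, and using $\int_{-\infty}^{y}\p_{\xi}\lambda(\xi, \tau)\, d\xi=\lambda(y, \tau)$ for the source integral, the two Duhamel contributions become exactly $U[z_{0}](x, t, 0)$ and $\int_{0}^{t}U[\p_{x}\lambda(\tau)](x, t, \tau)\, d\tau$ once we return to the original variable. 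The remaining points---justifying the interchange of $\p_{\tilde{x}}$ with the integrals and the vanishing of the boundary terms in the primitive---are routine consequences of the regularity and decay of $z_{0}$, $\lambda$ and the Gaussian bounds on $G_{0}$, and the conclusion agrees with Lemma 3.3 in \cite{F19-1} and Lemma 3.1 in \cite{Ka07}.
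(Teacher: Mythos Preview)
Your proposal is correct and follows exactly the approach the paper intends: the paper does not give an independent proof but defers to Lemma~3.3 in \cite{F19-1} and Lemma~3.1 in \cite{Ka07}, both of which linearize the primitive of \eqref{2-25} via the Cole--Hopf substitution $W=\tilde{\eta}w$, reduce to a pure heat equation, and then differentiate the Duhamel representation---precisely what you have outlined. The double cancellation you identify (of the first- and zeroth-order coefficients) is indeed the key computation, and your verification of it is accurate.
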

\noindent
This explicit representation formula \eqref{2-27} plays an important roles in the proof of Theorem~\ref{main2}. Especially, the following estimate can be obtained:
\begin{lem}\label{N.decay}
Assume that $|M| \le1$. Let $1\le p, q \le \infty$ and $\frac{1}{p}+\frac{1}{q}=1$. Then the following estimate 
\begin{align}
\label{2-28}
\begin{split}
\| U[\p_{x} \lambda(\tau)](\cdot, t, \tau)\|_{L^{\infty}} \le C\sum_{n=0}^{1}(1+t)^{-\frac{1}{2}+\frac{n}{2}}(t-\tau)^{-\frac{1}{2}+\frac{1}{2p}-\frac{n}{2}}\| \lambda(\cdot, \tau)\|_{L^{q}}
\end{split}
\end{align}
holds for $t>\tau$. 
\end{lem}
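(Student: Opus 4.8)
The plan is to reduce $U[\p_x\lambda(\tau)]$ to a pair of heat-kernel convolutions and then apply Young's inequality together with the decay estimates already available in Lemma~\ref{heat.decay} and Lemma~\ref{eta.decay}. First I would exploit the special structure of the argument $h=\p_x\lambda(\tau)$: since $\lambda(\cdot,\tau)$ decays at spatial infinity, the inner primitive collapses, $\int_{-\infty}^{y}\p_\xi\lambda(\xi,\tau)\,d\xi=\lambda(y,\tau)$, so that
\[
U[\p_x\lambda(\tau)](x,t,\tau)=\int_{\R}\p_x\bigl(G_0(x-y,t-\tau)\eta(x,t)\bigr)(\eta(y,\tau))^{-1}\lambda(y,\tau)\,dy.
\]
Next I would apply the Leibniz rule to $\p_x(G_0(x-y,t-\tau)\eta(x,t))$, splitting the integral into a term carrying $(\p_x G_0)(x-y,t-\tau)\eta(x,t)$ and a term carrying $G_0(x-y,t-\tau)\p_x\eta(x,t)$. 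This is precisely what produces the two summands $n=1$ and $n=0$ in the claimed bound.

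In the first term the factors $\eta(x,t)$ and $(\eta(y,\tau))^{-1}$ are uniformly bounded by \eqref{2-21} and \eqref{2-22}, so after pulling them out the term is controlled by the convolution $|(\p_x G_0)(t-\tau)*\lambda(\tau)|$; Young's inequality with $\tfrac1p+\tfrac1q=1$ and the bound $\|\p_x G_0(\cdot,t-\tau)\|_{L^p}\le C(t-\tau)^{-1+\frac{1}{2p}}$ from \eqref{2-13} gives the $n=1$ contribution $(t-\tau)^{-1+\frac{1}{2p}}\|\lambda(\tau)\|_{L^q}$. For the second term I would pull out the $x$-dependent factor $\p_x\eta(x,t)$ as a supremum, bounding it by $\|\p_x\eta(\cdot,t)\|_{L^\infty}\le C(1+t)^{-1/2}$ via \eqref{2-23} (using $|M|\le1$), and bound the surviving convolution $G_0(t-\tau)*(\eta(\tau)^{-1}\lambda(\tau))$ by $\|G_0(\cdot,t-\tau)\|_{L^p}\,\|\eta(\tau)^{-1}\lambda(\tau)\|_{L^q}\le C(t-\tau)^{-\frac{1}{2}+\frac{1}{2p}}\|\lambda(\tau)\|_{L^q}$, again using Young's inequality, \eqref{2-13} with $l=0$, and the uniform bound on $\eta^{-1}$. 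This furnishes the $n=0$ contribution $(1+t)^{-1/2}(t-\tau)^{-\frac{1}{2}+\frac{1}{2p}}\|\lambda(\tau)\|_{L^q}$, and summing the two estimates yields \eqref{2-28}.

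The computation is essentially bookkeeping and there is no deep obstacle, so the main point requiring care is the $L^\infty$-in-$x$ estimate of the second term: because $\p_x\eta(x,t)$ depends on the external variable $x$ rather than on the integration variable $y$, one must bound it pointwise by its $L^\infty$ norm \emph{before} applying Young's inequality to the remaining convolution, instead of trying to absorb it into the kernel. Once this is done, matching the exponents produced by \eqref{2-13} and \eqref{2-23} against the two powers of $(1+t)$ and $(t-\tau)$ in the claimed sum is immediate.
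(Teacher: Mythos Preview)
Your proposal is correct and follows essentially the same argument as the paper: collapse the primitive via $\int_{-\infty}^{y}\p_\xi\lambda\,d\xi=\lambda(y,\tau)$, apply the Leibniz rule to $\p_x(G_0\eta)$ to obtain the two summands indexed by $n=0,1$, pull out $\|\p_x^{1-n}\eta(\cdot,t)\|_{L^\infty}$ using \eqref{2-21} and Lemma~\ref{eta.decay}, and then estimate the remaining convolution $\p_x^{n}G_0(t-\tau)*(\eta^{-1}\lambda)(\tau)$ in $L^\infty$ by Young's inequality, \eqref{2-13}, and \eqref{2-22}. The only cosmetic difference is that the paper writes the two terms as a single sum $\sum_{n=0}^{1}\p_x^{1-n}\eta(x,t)\int_\R\p_x^nG_0(x-y,t-\tau)(\eta(y,\tau))^{-1}\lambda(y,\tau)\,dy$ rather than treating the cases $n=0$ and $n=1$ separately.
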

\begin{proof}
From \eqref{2-26}, we obtain
\begin{align*}
\begin{split}
U[\p_{x} \lambda(\tau)](x, t, \tau)&=\int_{\R}\p_{x}(G_{0}(x-y, t-\tau)\eta(x, t))(\eta(y, \tau))^{-1}\lambda(y, \tau)dy \\
&=\sum_{n=0}^{1}\p_{x}^{1-n}\eta(x, t)\int_{\R}\p_{x}^{n}G_{0}(x-y, t-\tau)(\eta(y, \tau))^{-1}\lambda(y, \tau)dy.
\end{split}
\end{align*}
By using Young's inequality, Lemma 2.5, \eqref{2-21}, \eqref{2-13} and \eqref{2-22}, we obtain 
\begin{align*}
\begin{split}
\|U[\p_{x} \lambda(\tau)](\cdot, t, \tau)\|_{L^{\infty}}&\le C\sum_{n=0}^{1}\|\p_{x}^{1-n}\eta(\cdot, t)\|_{L^{\infty}}\| \p_{x}^{n}G_{0}(t-\tau)*(\eta^{-1}\lambda)(\tau)\|_{L^{\infty}}\\
&\le C\sum_{n=0}^{1}(1+t)^{-\frac{1}{2}+\frac{n}{2}}(t-\tau)^{-\frac{1}{2}+\frac{1}{2p}-\frac{n}{2}}\| \lambda(\cdot, \tau)\|_{L^{q}}.
\end{split}
\end{align*}
\end{proof}

\newpage
\section{Proof of Theorem~\ref{main1}}

The purpose of this section is to prove Theorem~\ref{main1}. In order to obtain the upper bound of $u-\chi$, we rewrite the differential equations \eqref{1-2} and \eqref{1-6} as follows:
\begin{align}
\label{3-1}
u(t)&=G(t)*(u_{0}+u_{1})+\p_{t}G(t)*u_{0}-\int_{0}^{t}G(t-\tau)*(g(u)_{x})(\tau)d\tau, \\
\label{3-2}
\chi(t)&=G_{0}(t)*\chi_{0}-\frac{b}{2}\int_{0}^{t}G_{0}(t-\tau)*((\chi^{2})_{x})(\tau)d\tau, 
\end{align}
where $g(u)=\frac{b}{2}u^{2}+\frac{c}{3!}u^{3}$ and $\chi_{0}(x)=\chi(x, 0)$. Therefore, if we set 
\begin{equation}
\label{3-3}
\phi(x, t)\equiv u(x, t)-\chi(x, t),
\end{equation}
then $\phi(x, t)$ satisfies the following relation: 
\begin{align}
\label{3-4}
\begin{split}
\phi(t)=&\ (G-G_{0})(t)*(u_{0}+u_{1})+G_{0}(t)*(u_{0}+u_{1}-\chi_{0})\\
&+\p_{t}G(t)*u_{0}-\frac{c}{3!}\int_{0}^{t}G(t-\tau)*((u^{3})_{x})(\tau)d\tau\\
&-\frac{b}{2}\int_{0}^{t}(G-G_{0})(t-\tau)*((u^{2})_{x})(\tau)d\tau-\frac{b}{2}\int_{0}^{t}G_{0}(t-\tau)*((u^{2}-\chi^{2})_{x})(\tau)d\tau\\
=&\ I_{1}+I_{2}+I_{3}+I_{4}+I_{5}+I_{6}. 
\end{split}
\end{align}

Now, let us prove Theorem~\ref{main1}. Our first step to show Theorem~\ref{main1} is to derive the following proposition:
\begin{prop}\label{prop.L1}
Assume the same conditions on $u_{0}$ and $u_{1}$ in Theorem~\ref{main1} are valid. Then, for any $\e>0$, we have 
\begin{align}
\label{3-5}
\|\phi(\cdot, t)\|_{L^{1}}\le C\begin{cases}
(1+t)^{-\frac{\min\{\alpha, \beta\}}{2}+\frac{1}{2}}, &t\ge0, \ 1<\min\{\alpha, \beta\}<2,\\
(1+t)^{-\frac{1}{2}+\e}, &t\ge0, \ \min\{\alpha, \beta\}=2,
\end{cases}
\end{align}
where $\phi(x, t)$ is defined by \eqref{3-3}.
\end{prop}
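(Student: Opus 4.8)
The plan is to estimate the six pieces in the decomposition \eqref{3-4} separately, isolating $I_2$ as the term that produces the rate claimed in \eqref{3-5}, showing $I_1,I_3,I_4,I_5$ decay strictly faster, and closing $I_6$ by a self-referential integral inequality driven by smallness. Throughout I write $\gamma\equiv\min\{\alpha,\beta\}$.

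First I would exploit the normalization of the nonlinear diffusion wave: since $\int_\R\chi_0\,dx=M=\int_\R(u_0+u_1)\,dx$, we have the zero-mass condition $\int_\R(u_0+u_1-\chi_0)\,dx=0$. Because $\chi_0$ decays like a Gaussian and \eqref{1-3} holds, $u_0+u_1-\chi_0$ obeys the pointwise bound \eqref{2-14} with exponent $\gamma$. Hence Lemma~\ref{heat.decay} applies to $I_2=G_0(t)*(u_0+u_1-\chi_0)$ (with $p=1$, $k=l=0$) and yields exactly $\|I_2\|_{L^1}\le C(1+t)^{-\gamma/2+1/2}$ for $1<\gamma<2$ and $\le C(1+t)^{-1/2}\log(2+t)\le C(1+t)^{-1/2+\e}$ for $\gamma=2$; a crude Young bound $\|G_0(t)\|_{L^1}\|u_0+u_1-\chi_0\|_{L^1}\le C$ covers $t$ near $0$. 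This is the dominant contribution and already matches the right-hand side of \eqref{3-5}.

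Next I would dispatch the remaining terms, all of which decay faster. For $I_1,I_3$ the estimates \eqref{2-6} and \eqref{2-5} of Lemma~\ref{LE} with $p=q=1$ give $\|I_1\|_{L^1}+\|I_3\|_{L^1}\le C(1+t)^{-1/2}$ (the exponential remainders are harmless). For the nonlinear terms $I_4,I_5$ I would move the $x$-derivative onto $G$, resp.\ $G-G_0$, use \eqref{2-5} and \eqref{2-7}, and insert the decay bounds $\|u^3\|_{L^1}=\|u\|_{L^3}^3\le CE_0^3(1+\tau)^{-1}$ and $\|u^2\|_{L^1}=\|u\|_{L^2}^2\le CE_0^2(1+\tau)^{-1/2}$ coming from \eqref{2-9}; splitting the time integral at $\tau=t/2$ then produces $\|I_4\|_{L^1}+\|I_5\|_{L^1}\le C(1+t)^{-1/2}\log(1+t)$. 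Since $\gamma/2-1/2<1/2$, all four terms are $\le C(1+t)^{-\gamma/2+1/2}$ for $\gamma<2$ and $\le C(1+t)^{-1/2+\e}$ for $\gamma=2$, and so are absorbed.

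The crux is $I_6$. Writing $u^2-\chi^2=\phi(u+\chi)$ and combining Young's inequality with $\|\p_xG_0(t-\tau)\|_{L^1}\le C(t-\tau)^{-1/2}$ (from \eqref{2-13}) and $\|u+\chi\|_{L^\infty}\le CE_0(1+\tau)^{-1/2}$ (from \eqref{2-9} and \eqref{2-11}) gives the self-referential bound
\[
\|I_6\|_{L^1}\le CE_0\int_0^t(t-\tau)^{-1/2}(1+\tau)^{-1/2}\|\phi(\tau)\|_{L^1}\,d\tau.
\]
To close the estimate I would introduce the weighted norm $N(t)\equiv\sup_{0\le\tau\le t}(1+\tau)^{\theta}\|\phi(\tau)\|_{L^1}$, with $\theta=\gamma/2-1/2$ when $1<\gamma<2$ and $\theta=1/2-\e$ when $\gamma=2$; a beta-function-type estimate (split at $\tau=t/2$, using $1/2+\theta=\gamma/2<1$) shows $\int_0^t(t-\tau)^{-1/2}(1+\tau)^{-1/2-\theta}\,d\tau\le C(1+t)^{-\theta}$, so that $(1+t)^\theta\|I_6\|_{L^1}\le CE_0\,N(t)$. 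Collecting every term yields $N(t)\le C_0+CE_0\,N(t)$; since $\phi\in C([0,\infty);L^1)$ by Proposition~\ref{GE} the quantity $N(t)$ is finite for each $t$, so for $E_0$ sufficiently small one absorbs $CE_0\,N(t)$ into the left-hand side to obtain $N(t)\le 2C_0$, which is \eqref{3-5}. The main obstacle is precisely this feedback structure of $I_6$: it has no closed-form bound and must be folded back into the unknown $N(t)$, so the decay exponent and the smallness of $E_0$ must be matched exactly to keep the feedback coefficient below one.
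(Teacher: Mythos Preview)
Your proposal is correct and follows essentially the same route as the paper: the same six-term decomposition \eqref{3-4}, the zero-mass application of Lemma~\ref{heat.decay} to isolate $I_2$ as the dominant piece, the routine $(1+t)^{-1/2}\log(2+t)$ bounds on $I_1,\dots,I_5$ via Lemma~\ref{LE} and Proposition~\ref{GE}, and the self-referential closure of $I_6$ through the weighted supremum and the smallness of $E_0$. Only cosmetic differences appear (your $N(t)$ is the paper's $M(T)$; you bound $\|u^3\|_{L^1}$ via $\|u\|_{L^3}^3$ whereas the paper uses $\|u\|_{L^\infty}^2\|u\|_{L^1}$).
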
 
\begin{proof}
We set 
\begin{align}
\label{3-6}
M(T)\equiv \begin{cases}\d
\sup_{0\le t\le T}(1+t)^{\frac{\gamma-1}{2}}\|\phi(\cdot, t)\|_{L^{1}}, &1<\gamma<2,\\
\d \sup_{0\le t\le T}(1+t)^{\frac{1}{2}-\e}\|\phi(\cdot, t)\|_{L^{1}}, &\gamma=2, 
\end{cases}
\end{align}
where $\gamma \equiv \min\{\alpha, \beta\}$ and $\e$ is any fixed constant such that $0<\e<\frac{1}{2}$. It suffices to estimate the each term of the right hand side of \eqref{3-4}. For the first term, from \eqref{2-6}, we have 
\begin{equation}
\label{3-7}
\|I_{1}(\cdot, t)\|_{L^{1}}\le C(1+t)^{-\frac{1}{2}}(\|u_{0}\|_{L^{1}}+\|u_{1}\|_{L^{1}}), \ t>0.
\end{equation}
Also, since $\int_{\R}(u_{0}+u_{1}-\chi_{0})dx=0$, \eqref{1-3}, \eqref{1-4} and \eqref{1-5}, by using \eqref{2-15}, it follows that 
\begin{align}
\label{3-8}
\|I_{2}(\cdot, t)\|_{L^{1}}\le C_{0}\begin{cases}
(1+t)^{-\frac{\gamma-1}{2}}, &t\ge1, \ 1<\gamma<2,\\
(1+t)^{-\frac{1}{2}}\log(2+t), &t\ge1, \ \gamma=2,
\end{cases}
\end{align}
where $C_{0}$ is a positive constant. For $I_{3}$, applying \eqref{2-5}, we obtain 
\begin{equation}
\label{3-9}
\|I_{3}(\cdot, t)\|_{L^{1}}\le C(1+t)^{-\frac{1}{2}}\|u_{0}\|_{L^{1}}, \ t\ge0.
\end{equation}
Next, we evaluate $I_{4}$. Applying \eqref{2-5} and \eqref{2-9}, we have
\begin{align}
\label{3-10}
\begin{split}
\|I_{4}(\cdot, t)\|_{L^{1}}&\le C\int_{0}^{t}\|\p_{x}G(t-\tau)*u^{3}(\tau)\|_{L^{1}}d\tau\\
&\le C\int_{0}^{t}(1+t-\tau)^{-\frac{1}{2}}\|u^{3}(\cdot, \tau)\|_{L^{1}}d\tau\\
&\le C\int_{0}^{t}(1+t-\tau)^{-\frac{1}{2}}\|u(\cdot, \tau)\|_{L^{\infty}}^{2}\|u(\cdot, \tau)\|_{L^{1}}d\tau\\
&\le CE_{0}^{(1, p)}\int_{0}^{t}(1+t-\tau)^{-\frac{1}{2}}(1+\tau)^{-1}d\tau \le CE_{0}^{(1, p)}(1+t)^{-\frac{1}{2}}\log(2+t), \ t\ge0. 
\end{split}
\end{align}
We note that $I_{4}$ does not appear if $c=0$. For $I_{5}$, by using \eqref{2-7} and \eqref{2-9}, similarly we have 
\begin{align}
\label{3-11}
\begin{split}
\|I_{5}(\cdot, t)\|_{L^{1}}&\le C\int_{0}^{t}\|\p_{x}(G-G_{0})(t-\tau)*u^{2}(\tau)\|_{L^{1}}d\tau\\
&\le C\int_{0}^{t}(t-\tau)^{-\frac{1}{2}}(1+t-\tau)^{-\frac{1}{2}}\|u^{2}(\cdot, \tau)\|_{L^{1}}d\tau\\
&\le CE_{0}^{(1, p)}\int_{0}^{t}(t-\tau)^{-\frac{1}{2}}(1+t-\tau)^{-\frac{1}{2}}(1+\tau)^{-\frac{1}{2}}d\tau\\
&\le CE_{0}^{(1, p)}(1+t)^{-\frac{1}{2}}\log(2+t), \ t\ge1. 
\end{split}
\end{align}
Finally, we evaluate $I_{6}$. From Young's inequality, \eqref{2-13}, \eqref{2-9}, \eqref{2-12}, \eqref{3-3} and \eqref{3-6}, we obtain 
\begin{align}
\label{3-12}
\begin{split}
\|I_{6}(\cdot, t)\|_{L^{1}}&\le C\int_{0}^{t}\|\p_{x}G_{0}(t-\tau)*(u^{2}-\chi^{2})(\tau)\|_{L^{1}}d\tau\\
&\le C\int_{0}^{t}(t-\tau)^{-\frac{1}{2}}\|(u^{2}-\chi^{2})(\cdot, \tau)\|_{L^{1}}d\tau\\
&\le C\int_{0}^{t}(t-\tau)^{-\frac{1}{2}}(\|u(\cdot, \tau)\|_{L^{\infty}}+\|\chi(\cdot, \tau)\|_{L^{\infty}})\|\phi(\cdot, \tau)\|_{L^{1}}d\tau\\
&\le CE_{0}^{(1, p)}M(T)\begin{cases}\d \int_{0}^{t}(t-\tau)^{-\frac{1}{2}}(1+\tau)^{-\frac{\gamma}{2}}d\tau, &1<\gamma<2,\\
\d \int_{0}^{t}(t-\tau)^{-\frac{1}{2}}(1+\tau)^{-1+\e}d\tau, &\gamma=2
\end{cases}\\
&\le CE_{0}^{(1, p)}M(T)\begin{cases}(1+t)^{-\frac{\gamma-1}{2}}, &t\ge1, \ 1<\gamma<2, \\
 (1+t)^{-\frac{1}{2}+\e}, &t\ge1, \ \gamma=2.
 \end{cases}
\end{split}
\end{align}
Therefore, combining \eqref{3-4} and \eqref{3-7} through \eqref{3-12}, we have
\begin{align}
\label{3-13}
\begin{split}
\|\phi(\cdot, t)\|_{L^{1}}\le&\ CE_{0}^{(1, p)}(1+t)^{-\frac{1}{2}}+CE_{0}^{(1, p)}(1+t)^{-\frac{1}{2}}\log(2+t)\\
&+C_{0}\begin{cases}(1+t)^{-\frac{\gamma-1}{2}}, &1\le t\le T, \ 1<\gamma<2, \\
 (1+t)^{-\frac{1}{2}}\log(2+t), &1\le t\le T, \ \gamma=2
 \end{cases}\\
&+CE_{0}^{(1, p)}M(T)\begin{cases}(1+t)^{-\frac{\gamma-1}{2}}, &1\le t\le T, \ 1<\gamma<2, \\
 (1+t)^{-\frac{1}{2}+\e}, &1\le t\le T, \ \gamma=2.
 \end{cases}
\end{split}
\end{align}
For $0\le t\le 1$, from \eqref{2-9}, \eqref{2-12} and $|M|\le E_{0}^{(1, p)}$, we obtain 
\begin{equation}
\label{3-14}
\|\phi(\cdot, t)\|_{L^{1}}\le \|u(\cdot, t)\|_{L^{1}}+\|\chi(\cdot, t)\|_{L^{1}}\le CE_{0}^{(1, p)}, \ 0\le t\le 1.
\end{equation}
Since $\log(2+t)\le C(1+t)^{\e}$, combining \eqref{3-13} and \eqref{3-14}, we arrive at 
\begin{equation*}
M(T)\le CE_{0}^{(1, p)}+C_{0}+C_{1}E_{0}^{(1, p)}M(T),
\end{equation*}
where $C_{1}$ is a positive constant. Therefore, we obtain the desired estimate 
\begin{equation*}
M(T)\le 2CE_{0}^{(1, p)}+2C_{0}
\end{equation*}
if $E_{0}^{(1, p)}$ is small that $C_{1}E_{0}^{(1, p)}\le\frac{1}{2}$. This completes the proof. 
\end{proof}

Next, we shall derive $L^{\infty}$-estimate of $u-\chi$. Actually, we have the following proposition:
\begin{prop}\label{prop.Linf}
Assume the same conditions on $u_{0}$ and $u_{1}$ in Theorem~\ref{main1} are valid. Then, for any $\e>0$, we have 
\begin{align}
\label{3-15}
\|\phi(\cdot, t)\|_{L^{\infty}}\le C\begin{cases}
(1+t)^{-\frac{\min\{\alpha, \beta\}}{2}}, &t\ge0, \ 1<\min\{\alpha, \beta\}<2,\\
(1+t)^{-1+\e}, &t\ge0, \ \min\{\alpha, \beta\}=2,
\end{cases}
\end{align}
where $\phi(x, t)$ is defined by \eqref{3-3}.
\end{prop}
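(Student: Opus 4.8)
The plan is to bootstrap from the $L^1$ bound in Proposition~\ref{prop.L1} to the $L^\infty$ bound via the same Duhamel decomposition \eqref{3-4}. I would define a weighted sup-norm quantity
\begin{align*}
N(T)\equiv \begin{cases}\d
\sup_{0\le t\le T}(1+t)^{\frac{\gamma}{2}}\|\phi(\cdot, t)\|_{L^{\infty}}, &1<\gamma<2,\\
\d \sup_{0\le t\le T}(1+t)^{1-\e}\|\phi(\cdot, t)\|_{L^{\infty}}, &\gamma=2,
\end{cases}
\end{align*}
with $\gamma\equiv\min\{\alpha,\beta\}$, and show $N(T)$ stays bounded uniformly in $T$ by estimating each $I_j$ in the $L^\infty$-norm. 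The linear terms $I_1, I_3$ are handled directly: by \eqref{2-6} with $p=\infty, q=1$ one gets $\|I_1\|_{L^\infty}\le Ct^{-1/2}(1+t)^{-1/2}\|u_0+u_1\|_{L^1}$, and \eqref{2-5} gives a comparable bound for $I_3$; both decay like $(1+t)^{-1}$ for large $t$, which is at least as fast as the target rate $(1+t)^{-\gamma/2}$ since $\gamma\le 2$. The crucial linear term is $I_2=G_0(t)*(u_0+u_1-\chi_0)$: because $\int_\R(u_0+u_1-\chi_0)\,dx=0$ and the integrand decays like $(1+|x|)^{-\gamma}$ by \eqref{1-3} and \eqref{1-5}, Lemma~\ref{heat.decay} applied with $p=\infty$ yields exactly $\|I_2\|_{L^\infty}\le C(1+t)^{-\gamma/2}$ when $1<\gamma<2$ and $C(1+t)^{-1}\log(2+t)$ when $\gamma=2$. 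This term produces the leading-order rate and the constant $C_0$ that will survive in the final estimate.

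For the nonlinear terms I would use $L^\infty$-$L^1$ and $L^\infty$-$L^\infty$ versions of the kernel estimates together with the decay of $u$ from Proposition~\ref{GE}. For $I_4$ (the cubic term), \eqref{2-5} with $p=\infty, q=1$ gives $\|\p_x G(t-\tau)*u^3\|_{L^\infty}\le C(t-\tau)^{-1}(1+t-\tau)^{-1/2}\|u^3\|_{L^1}$, and combining with $\|u\|_{L^\infty}^2\|u\|_{L^1}\le CE_0(1+\tau)^{-3/2}$ from \eqref{2-9}, the time integral converges and decays fast enough to be absorbed. For $I_5$ the smoothing estimate \eqref{2-7} handles the $(G-G_0)$ factor similarly. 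The decisive term is $I_6=-\tfrac{b}{2}\int_0^t G_0(t-\tau)*((u^2-\chi^2)_x)\,d\tau$: writing $u^2-\chi^2=(u+\chi)\phi$ and applying Young's inequality with \eqref{2-13} I would split the $L^\infty$-norm of $(u^2-\chi^2)$ using $\|(u+\chi)\phi\|_{L^\infty}\le(\|u\|_{L^\infty}+\|\chi\|_{L^\infty})\|\phi\|_{L^\infty}$, then feed in the definition of $N(T)$ for $\|\phi\|_{L^\infty}$ and the decay $(1+\tau)^{-1/2}$ of $\|u\|_{L^\infty},\|\chi\|_{L^\infty}$. This gives a self-referential bound $\|I_6\|_{L^\infty}\le CE_0\, N(T)\,(1+t)^{-\gamma/2}$ (resp. with the $\e$-loss when $\gamma=2$), so that $I_6$ can be absorbed into the left side provided $E_0$ is small.

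Collecting all the pieces I expect to reach an inequality of the form $N(T)\le CE_0+C_0+C_1E_0\,N(T)$, exactly parallel to the closing argument of Proposition~\ref{prop.L1}; choosing $E_0$ small enough that $C_1E_0\le\tfrac12$ gives a uniform bound $N(T)\le 2CE_0+2C_0$ and hence \eqref{3-15}. The short-time range $0\le t\le 1$ is disposed of separately by $\|\phi\|_{L^\infty}\le\|u\|_{L^\infty}+\|\chi\|_{L^\infty}\le CE_0$ using \eqref{2-9} and \eqref{2-12}.

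\medskip\noindent
The main obstacle I anticipate is the term $I_6$, and more specifically making the self-consistent estimate close with the correct exponent. One must be careful that the time integral $\int_0^t (t-\tau)^{-1/2}(1+\tau)^{-\gamma/2}\,d\tau$ genuinely produces the rate $(1+t)^{-(\gamma-1)/2}$ in $L^1$ but $(1+t)^{-\gamma/2}$ in the $L^\infty$ bootstrap — the gain of half a power between the $L^1$ and $L^\infty$ settings comes from the extra $(t-\tau)^{-1/2}$ in the kernel estimate \eqref{2-13} at $p=\infty$ versus $p=1$, so the two propositions are not identical and the exponents must be tracked. A secondary subtlety is the borderline case $\gamma=2$, where the logarithmic factors from $I_2$ and from the integrals in $I_4, I_5$ must be shown to be dominated by $(1+t)^\e$, using $\log(2+t)\le C(1+t)^\e$; one must keep $\e$ fixed with $0<\e<\tfrac12$ throughout so that all the exponents stay in the convergent regime.
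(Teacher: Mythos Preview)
Your overall strategy matches the paper's, but there is a genuine gap in your treatment of $I_6$. As you describe it, you estimate
\[
\|I_6(\cdot,t)\|_{L^\infty}\le C\int_0^t \|\p_x G_0(\cdot,t-\tau)\|_{L^1}\,\|(u^2-\chi^2)(\cdot,\tau)\|_{L^\infty}\,d\tau
\le CE_0\, N(T)\int_0^t (t-\tau)^{-\frac12}(1+\tau)^{-\frac{1+\gamma}{2}}\,d\tau.
\]
This integral is only of order $(1+t)^{-1/2}$, not $(1+t)^{-\gamma/2}$: on $[0,t/2]$ one has $(t-\tau)^{-1/2}\sim t^{-1/2}$ while $\int_0^\infty(1+\tau)^{-(1+\gamma)/2}\,d\tau<\infty$ for $\gamma>1$. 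With that exponent the bootstrap does not close, so the claimed bound $\|I_6\|_{L^\infty}\le CE_0 N(T)(1+t)^{-\gamma/2}$ is not justified by the method you wrote.

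The missing step --- and the place where Proposition~\ref{prop.L1} is actually used --- is to split $\int_0^t=\int_0^{t/2}+\int_{t/2}^t$ and use different Young pairings on the two pieces. On $[t/2,t]$ your $L^\infty$--$L^\infty$ estimate is fine and contributes $CE_0 N(T)(1+t)^{-\gamma/2}$. On $[0,t/2]$ one must instead take the kernel in $L^\infty$ and the data in $L^1$, i.e.\ bound by $(t-\tau)^{-1}\|(u^2-\chi^2)(\cdot,\tau)\|_{L^1}$, and then invoke \eqref{3-5} to get
\[
\|(u^2-\chi^2)(\cdot,\tau)\|_{L^1}\le(\|u(\cdot,\tau)\|_{L^\infty}+\|\chi(\cdot,\tau)\|_{L^\infty})\|\phi(\cdot,\tau)\|_{L^1}\le CE_0(1+\tau)^{-\gamma/2}.
\]
Since $\gamma/2<1$, this yields $\int_0^{t/2}(t-\tau)^{-1}(1+\tau)^{-\gamma/2}\,d\tau\le C(1+t)^{-\gamma/2}$ (and $(1+t)^{-1+\e}$ when $\gamma=2$), which is the required rate. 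Consequently the correct inequality for $I_6$ has the form $CE_0(1+N(T))(1+t)^{-\gamma/2}$, the ``$1$'' being the contribution of the $L^1$ estimate; your opening sentence about bootstrapping from Proposition~\ref{prop.L1} is exactly right, but it has to be implemented here. A minor side remark: in $I_4$ the bound $\|u^3\|_{L^1}\le CE_0(1+\tau)^{-3/2}$ should read $(1+\tau)^{-1}$, though this does not affect the outcome.
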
 
\begin{proof}
We set 
\begin{align}
\label{3-16}
N(T)\equiv \begin{cases}\d
\sup_{0\le t\le T}(1+t)^{\frac{\gamma}{2}}\|\phi(\cdot, t)\|_{L^{\infty}}, &1<\gamma<2,\\
\d \sup_{0\le t\le T}(1+t)^{1-\e}\|\phi(\cdot, t)\|_{L^{\infty}}, &\gamma=2, 
\end{cases}
\end{align}
where $\gamma \equiv \min\{\alpha, \beta\}$ and $\e$ is any fixed constant such that $0<\e<1$. We evaluate the each term of the right hand side of \eqref{3-4}. For $I_{1}$, from \eqref{2-6}, we have 
\begin{equation}
\label{3-17}
\|I_{1}(\cdot, t)\|_{L^{\infty}}\le Ct^{-\frac{1}{2}}(1+t)^{-\frac{1}{2}}(\|u_{0}\|_{L^{1}}+\|u_{1}\|_{L^{1}})\le CE_{0}^{(1, p)}(1+t)^{-1}, \ t\ge1.
\end{equation}
Also, in the same way to get \eqref{3-8}, applying \eqref{2-15}, we see that 
\begin{align}
\label{3-18}
\|I_{2}(\cdot, t)\|_{L^{\infty}}\le C_{0}\begin{cases}
(1+t)^{-\frac{\gamma}{2}}, &t\ge1, \ 1<\gamma<2,\\
(1+t)^{-1}\log(2+t), &t\ge1, \ \gamma=2, 
\end{cases}
\end{align}
where $C_{0}$ is a positive constant. For $I_{3}$, from \eqref{2-5}, we get
\begin{equation}
\label{3-19}
\|I_{3}(\cdot, t)\|_{L^{\infty}}\le C(1+t)^{-1}\|u_{0}\|_{L^{1}}+Ce^{-c_{0}t}\|u_{0}\|_{L^{\infty}}\le CE_{0}^{(1, p)}(1+t)^{-1}, \ t\ge0.
\end{equation}
Here, we have used the Gagliardo-Nirenberg inequality $\|u_{0}\|_{L^{\infty}}\le C\|u_{0}\|_{L^{p}}^{1-\frac{1}{p}}\|u'_{0}\|_{L^{p}}^{\frac{1}{p}}\le CE_{0}^{(1, p)}$. Next, applying \eqref{2-5} and \eqref{2-9}, we have
\begin{align}
\label{3-20}
\begin{split}
\|I_{4}(\cdot, t)\|_{L^{\infty}}&\le C\int_{0}^{t}\|\p_{x}G(t-\tau)*u^{3}(\tau)\|_{L^{\infty}}d\tau\\
&\le C\int_{0}^{t}(1+t-\tau)^{-1}\|u^{3}(\cdot, \tau)\|_{L^{1}}d\tau+C\int_{0}^{t}e^{-c_{0}(t-\tau)}\|u^{3}(\cdot, \tau)\|_{L^{\infty}}d\tau\\
&\le C\int_{0}^{t}(1+t-\tau)^{-1}\|u(\cdot, \tau)\|_{L^{\infty}}^{2}\|u(\cdot, \tau)\|_{L^{1}}d\tau+C\int_{0}^{t}e^{-c_{0}(t-\tau)}\|u^{3}(\cdot, \tau)\|_{L^{\infty}}d\tau\\
&\le CE_{0}^{(1, p)}\int_{0}^{t}(1+t-\tau)^{-1}(1+\tau)^{-1}d\tau+CE_{0}^{(1, p)}\int_{0}^{t}e^{-c_{0}(t-\tau)}(1+\tau)^{-\frac{3}{2}}d\tau\\
&\le CE_{0}^{(1, p)}(1+t)^{-1}\log(2+t), \ t\ge0. 
\end{split}
\end{align}
For $I_{5}$, by using \eqref{2-6}, H$\ddot{\text{o}}$lder's inequality, \eqref{2-9} and \eqref{2-10}, we have 
\begin{align}
\label{3-21}
\begin{split}
\|I_{5}(\cdot, t)\|_{L^{\infty}}&\le C\int_{0}^{t}\|(G-G_{0})(t-\tau)*\p_{x}(u^{2})(\tau)\|_{L^{1}}d\tau\\
&\le C\int_{0}^{t}(t-\tau)^{-\frac{1}{2}}(1+t-\tau)^{-\frac{1}{2}}\|\p_{x}(u^{2}(\cdot, \tau))\|_{L^{1}}d\tau\\
&\le C\int_{0}^{t}(t-\tau)^{-\frac{1}{2}}(1+t-\tau)^{-\frac{1}{2}}\|u(\cdot, \tau)\|_{L^{q}}\|\p_{x}u(\cdot, \tau)\|_{L^{p}}d\tau \ \ \biggl(\frac{1}{p}+\frac{1}{q}=1\biggl)\\
&\le CE_{0}^{(1, p)}\int_{0}^{t}(t-\tau)^{-\frac{1}{2}}(1+t-\tau)^{-\frac{1}{2}}(1+\tau)^{-1}d\tau\\
&\le CE_{0}^{(1, p)}(1+t)^{-1}\log(2+t), \ t\ge1. 
\end{split}
\end{align}
Finally, we evaluate $I_{6}$. From Young's inequality, \eqref{2-13}, \eqref{2-9}, \eqref{2-12}, \eqref{3-3}, \eqref{3-5} and \eqref{3-16}, we obtain 
\begin{align}
\label{3-22}
\begin{split}
\|I_{6}(\cdot, t)\|_{L^{\infty}}&\le C\int_{0}^{t}\|\p_{x}G_{0}(t-\tau)*(u^{2}-\chi^{2})(\tau)\|_{L^{\infty}}d\tau\\
&\le C\int_{0}^{t/2}(t-\tau)^{-1}\|(u^{2}-\chi^{2})(\cdot, \tau)\|_{L^{1}}d\tau+C\int_{t/2}^{t}(t-\tau)^{-\frac{1}{2}}\|(u^{2}-\chi^{2})(\cdot, \tau)\|_{L^{\infty}}d\tau\\
&\le C\int_{0}^{t/2}(t-\tau)^{-1}(\|u(\cdot, \tau)\|_{L^{\infty}}\|+\chi(\cdot, \tau)\|_{L^{\infty}})\|\phi(\cdot, \tau)\|_{L^{1}}d\tau\\
&\ \ \ +C\int_{t/2}^{t}(t-\tau)^{-\frac{1}{2}}(\|u(\cdot, \tau)\|_{L^{\infty}}+\|\chi(\cdot, \tau)\|_{L^{\infty}})\|\phi(\cdot, \tau)\|_{L^{\infty}}d\tau\\
&\le CE_{0}^{(1, p)}\begin{cases}\d \int_{0}^{t/2}(t-\tau)^{-1}(1+\tau)^{-\frac{\gamma}{2}}d\tau, &1<\gamma<2,\\
\d \int_{0}^{t/2}(t-\tau)^{-1}(1+\tau)^{-1+\e}d\tau, &\gamma=2
\end{cases}\\
&\ \ \ +CE_{0}^{(1, p)}N(T)\begin{cases}\d \int_{t/2}^{t}(t-\tau)^{-\frac{1}{2}}(1+\tau)^{-\frac{\gamma+1}{2}}d\tau, &1<\gamma<2,\\
\d \int_{t/2}^{t}(t-\tau)^{-\frac{1}{2}}(1+\tau)^{-\frac{3}{2}+\e}d\tau, &\gamma=2
\end{cases}\\
&\le CE_{0}^{(1, p)}(1+N(T))\begin{cases}(1+t)^{-\frac{\gamma}{2}}, &t\ge1, \ 1<\gamma<2, \\
 (1+t)^{-1+\e}, &t\ge1, \ \gamma=2.
 \end{cases}
\end{split}
\end{align}
Therefore, combining \eqref{3-4} and \eqref{3-17} through \eqref{3-22}, we have
\begin{align}
\label{3-23}
\begin{split}
\|\phi(\cdot, t)\|_{L^{\infty}}\le&\ CE_{0}^{(1, p)}(1+t)^{-1}+CE_{0}^{(1, p)}(1+t)^{-1}\log(2+t)\\
&+C_{0}\begin{cases}(1+t)^{-\frac{\gamma}{2}}, &1\le t\le T, \ 1<\gamma<2, \\
 (1+t)^{-1}\log(2+t), &1\le t\le T, \ \gamma=2
 \end{cases}\\
&+CE_{0}^{(1, p)}(1+N(T))\begin{cases}(1+t)^{-\frac{\gamma}{2}}, &1\le t\le T, \ 1<\gamma<2, \\
 (1+t)^{-1+\e}, &1\le t\le T, \ \gamma=2.
 \end{cases}
\end{split}
\end{align}
For $0\le t\le 1$, in the same way to get \eqref{3-14}, we obtain 
\begin{equation}
\label{3-24}
\|\phi(\cdot, t)\|_{L^{\infty}}\le \|u(\cdot, t)\|_{L^{\infty}}+\|\chi(\cdot, t)\|_{L^{\infty}}\le CE_{0}^{(1, p)}, \ 0\le t\le 1.
\end{equation}
Since $\log(2+t)\le C(1+t)^{\e}$, combining \eqref{3-23} and \eqref{3-24}, it follows that  
\begin{equation*}
N(T)\le CE_{0}^{(1, p)}+C_{0}+C_{1}E_{0}^{(1, p)}N(T),
\end{equation*}
where $C_{1}$ is a positive constant. Therefore, we obtain the desired estimate 
\begin{equation*}
N(T)\le 2CE_{0}^{(1, p)}+2C_{0}
\end{equation*}
if $E_{0}^{(1, p)}$ is so small that $C_{1}E_{0}^{(1, p)}\le\frac{1}{2}$. This completes the proof. 
\end{proof}
\noindent
By virtue of Proposition~\ref{prop.L1}, Proposition~\ref{prop.Linf} and the the interpolation inequality\begin{equation*}
\|\phi(\cdot, t)\|_{L^{q}}\le \|\phi(\cdot, t)\|_{L^{\infty}}^{1-1/q}\|\phi(\cdot, t)\|_{L^{1}}^{1/q}, \ \ 1\le q\le \infty, 
\end{equation*}
We have the following corollary:
\begin{cor}\label{cor.Lq}
Assume the same conditions on $u_{0}$ and $u_{1}$ in Theorem~\ref{main1} are valid. Then, for any $\e>0$, we have 
\begin{align}
\label{3-26}
\|\phi(\cdot, t)\|_{L^{q}}\le C\begin{cases}
(1+t)^{-\frac{\min\{\alpha, \beta\}}{2}+\frac{1}{2q}}, &t\ge0, \ 1<\min\{\alpha, \beta\}<2,\\
(1+t)^{-1+\frac{1}{2q}+\e}, &t\ge0, \ \min\{\alpha, \beta\}=2
\end{cases}
\end{align}
for any $q$ with $1\le q\le \infty$, where $\phi(x, t)$ is defined by \eqref{3-3}.
\end{cor}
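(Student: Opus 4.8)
The plan is to obtain the full range $1 \le q \le \infty$ purely by interpolating the two endpoint estimates already in hand, namely the $L^{1}$-bound of Proposition~\ref{prop.L1} and the $L^{\infty}$-bound of Proposition~\ref{prop.Linf}. Both propositions guarantee $\phi(\cdot, t) \in L^{1}(\R) \cap L^{\infty}(\R)$ for each $t \ge 0$, so the standard interpolation inequality
\[
\|\phi(\cdot, t)\|_{L^{q}} \le \|\phi(\cdot, t)\|_{L^{\infty}}^{1 - 1/q}\,\|\phi(\cdot, t)\|_{L^{1}}^{1/q}, \qquad 1 \le q \le \infty,
\]
is applicable, and its endpoints $q=1$, $q=\infty$ are exactly the two propositions. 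Thus the only task is to insert the two decay rates and track the exponent of $(1+t)$.

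First I would treat the case $1 < \gamma < 2$, writing $\gamma \equiv \min\{\alpha, \beta\}$ as in the propositions. Substituting $\|\phi\|_{L^{1}} \le C(1+t)^{-\frac{\gamma}{2}+\frac{1}{2}}$ and $\|\phi\|_{L^{\infty}} \le C(1+t)^{-\frac{\gamma}{2}}$ into the interpolation inequality, the exponent becomes
\[
-\frac{\gamma}{2}\left(1 - \frac{1}{q}\right) + \left(-\frac{\gamma}{2} + \frac{1}{2}\right)\frac{1}{q} = -\frac{\gamma}{2} + \frac{1}{2q},
\]
which is precisely the claimed rate. Next, for $\gamma = 2$, I would fix one small $\e > 0$ and use it in both propositions simultaneously, so that $\|\phi\|_{L^{1}} \le C(1+t)^{-\frac{1}{2}+\e}$ and $\|\phi\|_{L^{\infty}} \le C(1+t)^{-1+\e}$; interpolating then gives the exponent
\[
(-1 + \e)\left(1 - \frac{1}{q}\right) + \left(-\frac{1}{2} + \e\right)\frac{1}{q} = -1 + \frac{1}{2q} + \e,
\]
again matching the statement. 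Using the \emph{same} $\e$ in both endpoint bounds is what keeps the error exponent equal to $\e$ (rather than a $q$-weighted combination of two separate small exponents), and it does so uniformly in $q$.

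I do not anticipate any genuine obstacle here: this corollary is a bookkeeping consequence of the two propositions, requiring no new analytic input such as a Duhamel estimate or a smallness argument. The only point to be careful about is that the constant $C$ and the admissible $\e$ enter only through the convex-combination weights $1 - 1/q$ and $1/q$, both of which lie in $[0,1]$, so the resulting bound holds with a $q$-independent $\e$ and a uniform constant for all $1 \le q \le \infty$.
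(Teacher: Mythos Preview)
Your proposal is correct and is exactly the argument the paper uses: the corollary is stated immediately after the interpolation inequality $\|\phi\|_{L^{q}} \le \|\phi\|_{L^{\infty}}^{1-1/q}\|\phi\|_{L^{1}}^{1/q}$ and is deduced directly from Propositions~\ref{prop.L1} and~\ref{prop.Linf}. Your exponent computations and the point about using the same $\e$ in both endpoint bounds match the paper's reasoning.
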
 

Next, we derive the following $L^{p}$-decay estimate of the spatial derivatives of $u-\chi$:
\begin{prop}\label{prop.Lp}
Assume the same conditions on $u_{0}$ and $u_{1}$ in Theorem~\ref{main1} are valid. Then, for any $\e>0$, we have 
\begin{align}
\label{3-27}
\|\p_{x}^{l}\phi(\cdot, t)\|_{L^{p}}\le C\begin{cases}
(1+t)^{-\frac{\min\{\alpha, \beta\}}{2}+\frac{1}{2p}-\frac{l}{2}}, &t\ge0, \ 1<\min\{\alpha, \beta\}<2,\\
(1+t)^{-1+\frac{1}{2p}-\frac{l}{2}+\e}, &t\ge0, \ \min\{\alpha, \beta\}=2
\end{cases}
\end{align}
for $0\le l\le s$, where $\phi(x, t)$ is defined by \eqref{3-3}.
\end{prop}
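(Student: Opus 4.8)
The plan is to run the same contraction/bootstrap scheme used in Propositions~\ref{prop.L1} and~\ref{prop.Linf}, now applied to the spatial derivatives, organized as an induction on the order $l$. Writing $\gamma \equiv \min\{\alpha, \beta\}$, I would introduce the weighted quantity
\[
N_l(T) \equiv
\begin{cases}
\d \sup_{0\le t\le T}(1+t)^{\frac{\gamma}{2}-\frac{1}{2p}+\frac{l}{2}}\|\p_x^l \phi(\cdot, t)\|_{L^p}, & 1<\gamma<2,\\
\d \sup_{0\le t\le T}(1+t)^{1-\frac{1}{2p}+\frac{l}{2}-\e}\|\p_x^l \phi(\cdot, t)\|_{L^p}, & \gamma=2,
\end{cases}
\]
and apply $\p_x^l$ to the representation~\eqref{3-4}, so that $\p_x^l \phi = \sum_{j=1}^6 \p_x^l I_j$. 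The base case $l=0$ is already contained in Corollary~\ref{cor.Lq}, so I would assume~\eqref{3-27} for all orders smaller than $l$ and prove it for $\p_x^l \phi$.

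The linear contributions are immediate and strictly faster than the claimed rate. I would bound $\p_x^l I_1$ and $\p_x^l I_3$ using~\eqref{2-7} and~\eqref{2-5} respectively (with $q=1$, the exponentially small remainders absorbed for $t\ge1$), both giving $(1+t)^{-1+\frac{1}{2p}-\frac{l}{2}}$ up to logarithms; this dominates the target since $\frac{\gamma}{2}\le 1$. The genuinely leading term is $\p_x^l I_2 = \p_x^l G_0(t)*(u_0+u_1-\chi_0)$: since $\int_\R(u_0+u_1-\chi_0)\,dx=0$ and the data obey~\eqref{1-3}, Lemma~\ref{heat.decay}, namely~\eqref{2-15} with this $\gamma$, produces exactly $t^{-\frac{\gamma}{2}+\frac{1}{2p}-\frac{l}{2}}$ (with an extra $\log(2+t)$ when $\gamma=2$), which matches the right-hand side of~\eqref{3-27}.

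For the nonlinear integrals I would split the $\tau$-integral at $t/2$. On $[0,t/2]$ the kernel is evaluated at times $\ge t/2$, so I place all derivatives on the kernel and use the $L^1$ decay of the nonlinearity; on $[t/2,t]$ I distribute the derivatives onto the nonlinear factor and use the near-diagonal $L^p$ bounds. For $\p_x^l I_4$ and $\p_x^l I_5$ I would control $u^3$, $u^2$ and their derivatives through Proposition~\ref{GE}, estimates~\eqref{2-9}--\eqref{2-10}; these are again subdominant, at worst $(1+t)^{-1+\frac{1}{2p}-\frac{l}{2}}\log(2+t)$, which sits below the target when $1<\gamma<2$ and is absorbed into $(1+t)^\e$ when $\gamma=2$.

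The main obstacle is the critical term $\p_x^l I_6$ arising from $u^2-\chi^2=(u+\chi)\phi$. Expanding by the Leibniz rule, $\p_x^l\bigl((u+\chi)\phi\bigr)=\sum_{j=0}^{l}\binom{l}{j}\p_x^j(u+\chi)\,\p_x^{l-j}\phi$, the summands with $j\ge1$ involve only derivatives of $\phi$ of order $<l$, handled by the inductive hypothesis together with~\eqref{2-10} and Lemma~\ref{chi.decay}, while the single $j=0$ term $(u+\chi)\,\p_x^l\phi$ is the one reproducing the unknown and feeding the bootstrap through $N_l(T)$. On $[0,t/2]$ this piece is driven only by the already-known $\|\phi\|_{L^1}$ from Proposition~\ref{prop.L1} via~\eqref{2-13} and~\eqref{2-11}; on $[t/2,t]$ it yields the factor $N_l(T)$. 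Collecting everything, together with the trivial bound on $[0,1]$ as in~\eqref{3-24}, gives $N_l(T)\le CE_0^{(1,p)}+C_0+C_1E_0^{(1,p)}N_l(T)$, and the smallness of $E_0^{(1,p)}$ closes the estimate and the induction. The delicate point throughout is the bookkeeping of the derivative count in the convolutions—keeping the kernel derivative low enough near $\tau=t$ to avoid a non-integrable singularity while still extracting the full decay—which is precisely what the split at $t/2$ manages.
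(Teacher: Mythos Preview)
Your proposal is correct and follows essentially the same strategy as the paper: you estimate $\p_x^l I_1,\dots,\p_x^l I_6$ separately, with the $t/2$--split on the Duhamel integrals, identify $I_2$ as the genuinely leading term via Lemma~\ref{heat.decay}, and close a bootstrap on the critical piece of $I_6$ coming from $(u+\chi)\,\p_x^l\phi$. The only organisational difference is that the paper does not induct on $l$ but instead defines a single combined quantity $L(T)=\sup_{0\le t\le T}\sum_{l=1}^{s}(1+t)^{\cdots}\|\p_x^l\phi(\cdot,t)\|_{L^p}$ and closes the bootstrap for all $1\le l\le s$ simultaneously; correspondingly, in the Leibniz expansion of $\p_x^l(u^2-\chi^2)$ the paper isolates the term $\p_x^l(u+\chi)\cdot\phi$ (estimating $\p_x^l(u+\chi)$ in $L^p$ and $\phi$ in $L^\infty$ via Proposition~\ref{prop.Linf}) and bounds all remaining terms $\p_x^m(u+\chi)\cdot\p_x^{l-m}\phi$, $0\le m\le l-1$, directly by $L(T)$ rather than by an inductive hypothesis. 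Both arrangements work; the simultaneous version is slightly more economical, while your inductive version makes the dependence on lower orders explicit. One small bookkeeping point: the smallness required to close the bootstrap is that of $E_0^{(s,p)}$ (since the exponentially decaying remainders in \eqref{2-5}, \eqref{2-7} involve $W^{l,p}$ norms up to order $s$), not merely $E_0^{(1,p)}$.
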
 
\begin{proof}
We have already shown \eqref{3-27} for $l=0$. In the following, let us prove \eqref{3-27} for $1\le l\le s$. We set 
\begin{align}
\label{3-28}
L(T)\equiv \begin{cases}\d
\sup_{0\le t\le T}\sum_{l=1}^{s}(1+t)^{\frac{\gamma}{2}-\frac{1}{2p}+\frac{l}{2}}\|\p_{x}^{l}\phi(\cdot, t)\|_{L^{p}}, &1<\gamma<2,\\
\d \sup_{0\le t\le T}\sum_{l=1}^{s}(1+t)^{1-\frac{1}{2p}+\frac{l}{2}-\e}\|\p_{x}^{l}\phi(\cdot, t)\|_{L^{p}}, &\gamma=2, 
\end{cases}
\end{align}
where $\gamma \equiv \min\{\alpha, \beta\}$ and $\e$ is any fixed constant such that $0<\e<\frac{1}{2}$. We evaluate the each term of the right hand side of \eqref{3-4}. For $I_{1}$, from \eqref{2-7}, we have 
\begin{align}
\label{3-29}
\begin{split}
\|\p_{x}^{l}I_{1}(\cdot, t)\|_{L^{p}}&\le Ct^{-\frac{1}{2}(1-\frac{1}{p})-\frac{l}{2}}(1+t)^{-\frac{1}{2}}\|u_{0}+u_{1}\|_{L^{1}}+Ce^{-c_{0}t}\|u_{0}+u_{1}\|_{W^{l-1, p}}\\
&\le CE_{0}^{(s, p)}(1+t)^{-1+\frac{1}{2p}-\frac{l}{2}}, \ t\ge1.
\end{split}
\end{align}
Also, applying \eqref{2-15}, it follows that 
\begin{align}
\label{3-30}
\|\p_{x}^{l}I_{2}(\cdot, t)\|_{L^{p}}\le C_{0}\begin{cases}
(1+t)^{-\frac{\gamma}{2}+\frac{1}{2p}-\frac{l}{2}}, &t\ge1, \ 1<\gamma<2,\\
(1+t)^{-1+\frac{1}{2p}-\frac{l}{2}}\log(2+t), &t\ge1, \ \gamma=2, 
\end{cases}
\end{align}
where $C_{0}$ is a positive constant. For $I_{3}$, from \eqref{2-5}, we get
\begin{align}
\label{3-31}
\begin{split}
\|\p_{x}^{l}I_{3}(\cdot, t)\|_{L^{p}}&\le C(1+t)^{-1+\frac{1}{2p}-\frac{l}{2}}\|u_{0}\|_{L^{1}}+Ce^{-c_{0}t}\|u_{0}\|_{W^{l, p}}\\
&\le CE_{0}^{(s, p)}(1+t)^{-1+\frac{1}{2p}-\frac{l}{2}}, \ t\ge0.
\end{split}
\end{align}
Next, applying \eqref{2-5}, \eqref{2-9} and \eqref{2-10}, we have
\begin{align}
\label{3-32}
\begin{split}
&\|\p_{x}^{l}I_{4}(\cdot, t)\|_{L^{p}}\\
&\le C\int_{0}^{t/2}\|\p_{x}^{l+1}G(t-\tau)*u^{3}(\tau)\|_{L^{p}}d\tau+C\int_{t/2}^{t}\|\p_{x}G(t-\tau)*\p_{x}^{l}(u^{3})(\tau)\|_{L^{p}}d\tau\\
&\le C\int_{0}^{t/2}(1+t-\tau)^{-1+\frac{1}{2p}-\frac{l}{2}}\|u^{3}(\cdot, \tau)\|_{L^{1}}d\tau+C\int_{0}^{t/2}e^{-c_{0}(t-\tau)}\|u^{3}(\cdot, \tau)\|_{W^{l, p}}d\tau\\
&\ \ \ +C\int_{t/2}^{t}(1+t-\tau)^{-\frac{1}{2}}\|\p_{x}^{l}(u^{3}(\cdot, \tau))\|_{L^{p}}d\tau\\
&\le CE_{0}^{(s, p)}\int_{0}^{t/2}(1+t-\tau)^{-1+\frac{1}{2p}-\frac{l}{2}}(1+\tau)^{-1}d\tau+CE_{0}^{(s, p)}\int_{0}^{t/2}e^{-c_{0}(t-\tau)}(1+\tau)^{-\frac{3}{2}+\frac{1}{2p}}d\tau\\
&\ \ \ +CE_{0}^{(s, p)}\int_{t/2}^{t}(1+t-\tau)^{-\frac{1}{2}}(1+\tau)^{-\frac{3}{2}+\frac{1}{2p}-\frac{l}{2}}d\tau\\
&\le CE_{0}^{(s, p)}(1+t)^{-1+\frac{1}{2p}-\frac{l}{2}}\log(2+t), \ t\ge0,
\end{split}
\end{align}
where we have used the estimate $\|\p_{x}^{l}(u^{3}(\cdot, t))\|_{L^{p}}\le CE_{0}^{(s, p)}(1+t)^{-\frac{3}{2}+\frac{1}{2p}-\frac{l}{2}}$. For $I_{5}$, by using \eqref{2-7}, \eqref{2-9} and \eqref{2-10}, we have 
\begin{align}
\label{3-33}
\begin{split}
\|\p_{x}^{l}I_{5}(\cdot, t)\|_{L^{p}}&\le C\int_{0}^{t/2}\|\p_{x}^{l+1}(G-G_{0})(t-\tau)*u^{2}(\tau)\|_{L^{p}}d\tau\\
&\ \ \ +C\int_{t/2}^{t}\|\p_{x}(G-G_{0})(t-\tau)*\p_{x}^{l}(u^{2})(\tau)\|_{L^{p}}d\tau\\
&\le C\int_{0}^{t/2}(t-\tau)^{-1+\frac{1}{2p}-\frac{l}{2}}(1+t-\tau)^{-\frac{1}{2}}\|u^{2}(\cdot, \tau)\|_{L^{1}}d\tau\\
&\ \ \ +C\int_{0}^{t/2}e^{-c_{0}(t-\tau)}\|u^{2}(\cdot, \tau)\|_{W^{l, p}}d\tau\\
&\ \ \ +C\int_{t/2}^{t}(t-\tau)^{-\frac{1}{2}}(1+t-\tau)^{-\frac{1}{2}}\|\p_{x}^{l}(u^{2}(\cdot, \tau))\|_{L^{p}}d\tau\\
&\le CE_{0}^{(s, p)}\int_{0}^{t/2}(t-\tau)^{-1+\frac{1}{2p}-\frac{l}{2}}(1+t-\tau)^{-\frac{1}{2}}(1+\tau)^{-\frac{1}{2}}d\tau\\
&\ \ \ +CE_{0}^{(s, p)}\int_{0}^{t/2}e^{-c_{0}(t-\tau)}(1+\tau)^{-1+\frac{1}{2p}}d\tau\\
&\ \ \ +CE_{0}^{(s, p)}\int_{t/2}^{t}(t-\tau)^{-\frac{1}{2}}(1+t-\tau)^{-\frac{1}{2}}(1+\tau)^{-1+\frac{1}{2p}-\frac{l}{2}}d\tau\\
&\le CE_{0}^{(s, p)}(1+t)^{-1+\frac{1}{2p}-\frac{l}{2}}\log(2+t), \ t\ge1,
\end{split}
\end{align}
where we have used the estimate $\|\p_{x}^{l}(u^{2}(\cdot, t))\|_{L^{p}}\le CE_{0}^{(s, p)}(1+t)^{-1+\frac{1}{2p}-\frac{l}{2}}$. Finally, we evaluate $I_{6}$. We prepare the following estimate. 
\begin{align}
\label{3-34}
\begin{split}
\|\p_{x}^{l}((u^{2}-\chi^{2})(\cdot, t))\|_{L^{p}}&=\|\p_{x}^{l}(((u+\chi)(u-\chi))(\cdot, t))\|_{L^{p}}\\
&\le C(\|\p_{x}^{l}u(\cdot, t)\|_{L^{p}}+\|\p_{x}^{l}\chi(\cdot, t)\|_{L^{p}})\|\phi(\cdot, t)\|_{L^{\infty}} \\
&\ \ \ +C\sum_{m=0}^{l-1}(\|\p_{x}^{m}u(\cdot, t)\|_{L^{\infty}}+\|\p_{x}^{m}\chi(\cdot, t)\|_{L^{\infty}})\|\p_{x}^{l-m}\phi(\cdot, t)\|_{L^{p}} \\
&\le CE_{0}^{(s, p)}(1+t)^{-\frac{1}{2}+\frac{1}{2p}-\frac{l}{2}}\begin{cases}
(1+t)^{-\frac{\gamma}{2}}, &1<\gamma<2, \\
 (1+t)^{-1+\e}, &\gamma=2
\end{cases}\\
&\ \ \ +CE_{0}^{(s, p)}L(T)\sum_{m=0}^{l-1}(1+t)^{-\frac{1}{2}-\frac{m}{2}}\begin{cases}
(1+t)^{-\frac{\gamma}{2}+\frac{1}{2p}-\frac{l-m}{2}}, &1<\gamma<2, \\
 (1+t)^{-1+\frac{1}{2p}-\frac{l-m}{2}+\e}, &\gamma=2
 \end{cases}\\
 &\le CE_{0}^{(s, p)}(1+L(T))\begin{cases}
(1+t)^{-\frac{1}{2}+\frac{1}{2p}-\frac{l}{2}-\frac{\gamma}{2}}, &1<\gamma<2, \\
 (1+t)^{-\frac{3}{2}+\frac{1}{2p}-\frac{l}{2}+\e}, &\gamma=2.
\end{cases}
\end{split}
\end{align}
Here, we have used \eqref{2-10}, \eqref{2-12}, \eqref{3-3}, \eqref{3-26}, \eqref{3-28} and the Gagliardo-Nirenberg inequality
\begin{equation*}
\|\p_{x}^{l}u(\cdot, t)\|_{L^{\infty}}\le C\|\p_{x}^{l}u(\cdot, t)\|_{L^{p}}^{1-\frac{1}{p}}\|\p_{x}^{l+1}u(\cdot, t)\|_{L^{p}}^{\frac{1}{p}}, \ \ 0\le l \le s-1.
\end{equation*}
Therefore, from Young's inequality, \eqref{2-13} and \eqref{3-34}, we obtain 
\begin{align}
\label{3-35}
\begin{split}
&\|\p_{x}^{l}I_{6}(\cdot, t)\|_{L^{p}}\\
&\le C\int_{0}^{t/2}\|\p_{x}^{l+1}G_{0}(t-\tau)*(u^{2}-\chi^{2})(\tau)\|_{L^{p}}d\tau+C\int_{t/2}^{t}\|\p_{x}G_{0}(t-\tau)*\p_{x}^{l}(u^{2}-\chi^{2})(\tau)\|_{L^{p}}d\tau\\
&\le C\int_{0}^{t/2}(t-\tau)^{-1+\frac{1}{2p}-\frac{l}{2}}\|(u^{2}-\chi^{2})(\cdot, \tau)\|_{L^{1}}d\tau+C\int_{t/2}^{t}(t-\tau)^{-\frac{1}{2}}\|\p_{x}^{l}((u^{2}-\chi^{2})(\cdot, \tau))\|_{L^{p}}d\tau\\
&\le CE_{0}^{(s, p)}(1+L(T))\begin{cases}\d \int_{0}^{t/2}(t-\tau)^{-1+\frac{1}{2p}-\frac{l}{2}}(1+\tau)^{-\frac{\gamma}{2}}d\tau, &1<\gamma<2,\\
\d \int_{0}^{t/2}(t-\tau)^{-1+\frac{1}{2p}-\frac{l}{2}}(1+\tau)^{-1+\e}d\tau, &\gamma=2
\end{cases}\\
&\ \ \ +CE_{0}^{(s, p)}(1+L(T))\begin{cases}\d \int_{t/2}^{t}(t-\tau)^{-\frac{1}{2}}(1+\tau)^{-\frac{1}{2}+\frac{1}{2p}-\frac{l}{2}-\frac{\gamma}{2}}d\tau, &1<\gamma<2,\\
\d \int_{t/2}^{t}(t-\tau)^{-\frac{1}{2}}(1+\tau)^{-\frac{3}{2}+\frac{1}{2p}-\frac{l}{2}+\e}d\tau, &\gamma=2
\end{cases}\\
&\le CE_{0}^{(s, p)}(1+L(T))\begin{cases}(1+t)^{-\frac{\gamma}{2}+\frac{1}{2p}-\frac{l}{2}}, &t\ge1, \ 1<\gamma<2, \\
 (1+t)^{-1+\frac{1}{2p}-\frac{l}{2}+\e}, &t\ge1, \ \gamma=2.
 \end{cases}
\end{split}
\end{align}
Thus, combining \eqref{3-4}, \eqref{3-29} through \eqref{3-33} and \eqref{3-35}, we obtain 
\begin{align}
\label{3-36}
\begin{split}
\|\p_{x}^{l}\phi(\cdot, t)\|_{L^{p}}\le&\ CE_{0}^{(s, p)}(1+t)^{-1+\frac{1}{2p}-\frac{l}{2}}+CE_{0}^{(s, p)}(1+t)^{-1+\frac{1}{2p}-\frac{l}{2}}\log(2+t)\\
&+C_{0}\begin{cases}
(1+t)^{-\frac{\gamma}{2}+\frac{1}{2p}-\frac{l}{2}}, &1\le t\le T, \ 1<\gamma<2,\\
(1+t)^{-1+\frac{1}{2p}-\frac{l}{2}}\log(2+t), &1\le t\le T, \ \gamma=2
\end{cases}
\\
&+CE_{0}^{(s, p)}(1+L(T))\begin{cases}(1+t)^{-\frac{\gamma}{2}+\frac{1}{2p}-\frac{l}{2}}, &1\le t\le T, \ 1<\gamma<2, \\
 (1+t)^{-1+\frac{1}{2p}-\frac{l}{2}+\e}, &1\le t\le T, \ \gamma=2.
 \end{cases}
\end{split}
\end{align}
For $0\le t\le 1$, in the same way to get \eqref{3-14} and \eqref{3-24}, we easily see  
\begin{equation}
\label{3-37}
\|\p_{x}^{l}\phi(\cdot, t)\|_{L^{p}}\le \|\p_{x}^{l}u(\cdot, t)\|_{L^{p}}+\|\p_{x}^{l}\chi(\cdot, t)\|_{L^{p}}\le CE_{0}^{(s, p)}, \ 0\le t\le 1.
\end{equation}
Summing up \eqref{3-36} and \eqref{3-37}, it follows that  
\begin{equation*}
L(T)\le CE_{0}^{(s, p)}+C_{0}+C_{1}E_{0}^{(s, p)}L(T),
\end{equation*}
where $C_{1}$ is a positive constant. Therefore, we arrive at the desired estimate 
\begin{equation*}
L(T)\le 2CE_{0}^{(s, p)}+2C_{0}
\end{equation*}
if $E_{0}^{(s, p)}$ is so small that $C_{1}E_{0}^{(s, p)}\le\frac{1}{2}$. This completes the proof. 
\end{proof}

\newpage
\begin{proof}[\rm{\bf{End of the proof of Theorem 1.1}}]
Since we have already shown \eqref{1-16} and \eqref{1-17} with $k=0, 1$ (Corollary~\ref{cor.Lq} and Proposition~\ref{prop.Lp}), we only need to prove \eqref{1-17} with $k=1, 2$. First, differentiating \eqref{3-1} with respect to $t$, then we have 
\begin{align*}
\begin{split}
\p_{t}u(t)=\p_{t}G(t)*(u_{0}+u_{1})+\p_{t}^{2}G(t)*u_{0}-\int_{0}^{t}\p_{t}G(t-\tau)*\p_{x}(g(u))(\tau)d\tau-G(0)*\p_{x}(g(u))(t), 
\end{split}
\end{align*}
where $g(u)=\frac{b}{2}u^{2}+\frac{c}{3!}u^{3}$. Moreover, because \eqref{2-2} and \eqref{2-3}, we see that $G(0)*\rho=0$ for any $\rho$. Therefore, we arrive at 
\begin{align}
\label{3-39}
\begin{split} 
\p_{t}u(t)=\p_{t}G(t)*(u_{0}+u_{1})+\p_{t}^{2}G(t)*u_{0}-\int_{0}^{t}\p_{t}G(t-\tau)*\p_{x}(g(u))(\tau)d\tau.
\end{split}
\end{align}
On the other hand, we have from \eqref{3-2}
\begin{align}
\label{3-40}
\begin{split}
\p_{t}\chi(t)=\p_{t}G_{0}(t)*\chi_{0}-\frac{b}{2}\int_{0}^{t}\p_{t}G_{0}(t-\tau)*\p_{x}(\chi^{2})(\tau)d\tau-\frac{b}{2}\p_{x}(\chi^{2})(t), 
\end{split}
\end{align}
where $\chi_{0}(x)=\chi(x, 0)$. Thus, combining \eqref{3-39} and \eqref{3-40}, it follows that 
\begin{align}
\label{3-41}
\begin{split}
\p_{t}(u(t)-\chi(t))&=\p_{t}(G-G_{0})(t)*(u_{0}+u_{1})+\p_{t}G_{0}(t)*(u_{0}+u_{1}-\chi_{0})\\
&\ \ \ +\p_{t}^{2}G(t)*u_{0}-\int_{0}^{t}\p_{t}G(t-\tau)*\p_{x}\left(g(u)-\frac{b}{2}\chi^{2}\right)(\tau)d\tau\\
&\ \ \ -\frac{b}{2}\int_{0}^{t}\p_{t}(G-G_{0})(t-\tau)*\p_{x}(\chi^{2})(\tau)d\tau+\frac{b}{2}\p_{x}(\chi^{2})(t)\\
&\equiv J_{1}+J_{2}+J_{3}+J_{4}+J_{5}+J_{6}. 
\end{split}
\end{align}

Now, we shall evaluate the all terms of the right hand side of \eqref{3-41}. Here and after in this proof, we set $\gamma \equiv \min\{\alpha, \beta\}$. First for $J_{1}$, it follows from \eqref{2-7} that 
\begin{align}
\label{3-42}
\begin{split}
\|\p_{x}^{l}J_{1}(\cdot, t)\|_{L^{p}}&\le CE_{0}^{(s, p)}t^{-\frac{1}{2}(1-\frac{1}{p})-\frac{1+l}{2}}(1+t)^{-\frac{1}{2}} \\
&\le CE_{0}^{(s, p)}(1+t)^{-\frac{3}{2}+\frac{1}{2p}-\frac{l}{2}}, \ t\ge1.
\end{split}
\end{align}
Also, since $\int_{\R}(u_{0}+u_{1}-\chi_{0})dx=0$, we have from \eqref{2-15}
\begin{align}
\label{3-43}
\|\p_{x}^{l}J_{2}(\cdot, t)\|_{L^{p}}\le C\begin{cases}
(1+t)^{-\frac{\gamma}{2}+\frac{1}{2p}-\frac{1+l}{2}}, &t\ge1, \ 1<\gamma<2,\\
(1+t)^{-\frac{3}{2}+\frac{1}{2p}-\frac{l}{2}}\log(2+t), &t\ge1, \ \gamma=2.
\end{cases}
\end{align}
Similarly, from \eqref{2-5}, we obtain 
\begin{align}
\label{3-44}
\|\p_{x}^{l}J_{3}(\cdot, t)\|_{L^{p}}&\le CE_{0}^{(s, p)}(1+t)^{-\frac{3}{2}+\frac{1}{2p}-\frac{l}{2}}, \ t\ge0. 
\end{align}
Next, we evaluate $J_{4}$. From \eqref{2-9}, \eqref{2-10}, \eqref{2-12}, \eqref{3-26} and \eqref{3-27}, we have
\begin{align}
\label{3-45}
\biggl\|\biggl(g(u)-\frac{b}{2}\chi^{2}\biggl)(\cdot, t)\biggl\|_{L^{1}}&\le C\begin{cases}
(1+t)^{-\frac{\gamma}{2}}, &t\ge0, \ 1<\gamma<2,\\
(1+t)^{-1+\e}, &t\ge0, \ \gamma=2,
\end{cases}
\end{align}
\begin{align}
\label{3-46}
\biggl\|\p_{x}^{l}\biggl(g(u)-\frac{b}{2}\chi^{2}\biggl)(\cdot, t)\biggl\|_{L^{p}}&\le C\begin{cases}
(1+t)^{-\frac{\gamma}{2}+\frac{1}{2p}-\frac{1+l}{2}}, &t\ge0, \ 1<\gamma<2,\\
(1+t)^{-\frac{3}{2}+\frac{1}{2p}-\frac{l}{2}+\e}, &t\ge0, \ \gamma=2,
\end{cases}
\end{align}
for $0\le l \le s$. Therefore, we can compute that from \eqref{2-5}, \eqref{3-45} and \eqref{3-46} that 
\begin{align}
\label{3-47}
\begin{split}
\|\p_{x}^{l}J_{4}(\cdot, t)\|_{L^{p}}&\le \int_{0}^{t/2}\biggl\|\p_{t}\p_{x}^{l+1}G(t-\tau)*\biggl(g(u)-\frac{b}{2}\chi^{2}\biggl)(\tau)\biggl\|_{L^{p}}d\tau\\
&\ \ \ +\int_{t/2}^{t}\biggl\|\p_{t}G(t-\tau)*\p_{x}^{l+1}\biggl(g(u)-\frac{b}{2}\chi^{2}\biggl)(\tau)\biggl\|_{L^{p}}d\tau\\
&\le C\int_{0}^{t/2}(1+t-\tau)^{-\frac{3}{2}+\frac{1}{2p}-\frac{l}{2}}\biggl\|\biggl(g(u)-\frac{b}{2}\chi^{2}\biggl)(\cdot, \tau)\biggl\|_{L^{1}}d\tau\\
&\ \ \ +C\int_{t/2}^{t}(1+t-\tau)^{-\frac{1}{2}}\biggl\|\p_{x}^{l+1}\biggl(g(u)-\frac{b}{2}\chi^{2}\biggl)(\cdot, \tau)\biggl\|_{L^{p}}d\tau\\
&\le C\begin{cases}\d \int_{0}^{t/2}(1+t-\tau)^{-\frac{3}{2}+\frac{1}{2p}-\frac{l}{2}}(1+\tau)^{-\frac{\gamma}{2}}d\tau, &1<\gamma<2,\\
\d \int_{0}^{t/2}(1+t-\tau)^{-\frac{3}{2}+\frac{1}{2p}-\frac{l}{2}}(1+\tau)^{-1+\e}d\tau, &\gamma=2
\end{cases}\\
&\ \ \ +C\begin{cases}\d \int_{t/2}^{t}(1+t-\tau)^{-\frac{1}{2}}(1+\tau)^{-1+\frac{1}{2p}-\frac{l}{2}-\frac{\gamma}{2}}d\tau, &1<\gamma<2,\\
\d \int_{t/2}^{t}(1+t-\tau)^{-\frac{1}{2}}(1+\tau)^{-2+\frac{1}{2p}-\frac{l}{2}+\e}d\tau, &\gamma=2
\end{cases}\\
&\le C\begin{cases}(1+t)^{-\frac{\gamma}{2}+\frac{1}{2p}-\frac{1+l}{2}}, &t\ge0, \ 1<\gamma<2, \\
 (1+t)^{-\frac{3}{2}+\frac{1}{2p}-\frac{l}{2}+\e}, &t\ge0, \ \gamma=2.
 \end{cases}
\end{split}
\end{align}
On the other hand, by using \eqref{2-7} and \eqref{2-12}, it follows that 
\begin{align}
\label{3-48}
\begin{split}
\|\p_{x}^{l}J_{5}(\cdot, t)\|_{L^{p}}&\le C\int_{0}^{t/2}\|\p_{t}\p_{x}^{l+1}(G-G_{0})(t-\tau)*(\chi^{2})(\tau)\biggl\|_{L^{p}}d\tau\\
&\ \ \ +C\int_{t/2}^{t}\biggl\|\p_{t}(G-G_{0})(t-\tau)*\p_{x}^{l+1}(\chi^{2})(\tau)\|_{L^{p}}d\tau\\
&\le C\int_{0}^{t/2}(t-\tau)^{-\frac{3}{2}+\frac{1}{2p}-\frac{l}{2}}(1+t-\tau)^{-\frac{1}{2}}(\|\chi^{2}(\cdot, \tau)\|_{L^{1}}+\|\chi^{2}(\cdot, \tau)\|_{W^{l+1, p}})d\tau\\
&\ \ \ +C\int_{t/2}^{t}(t-\tau)^{-\frac{1}{2}}(1+t-\tau)^{-\frac{1}{2}}\|\p_{x}^{l+1}(\chi^{2})(\cdot, \tau)\|_{L^{p}}d\tau\\
&\le CE_{0}^{(s, p)}\int_{0}^{t/2}(t-\tau)^{-\frac{3}{2}+\frac{1}{2p}-\frac{l}{2}}(1+t-\tau)^{-\frac{1}{2}}(1+\tau)^{-\frac{1}{2}}d\tau\\
&\ \ \ +CE_{0}^{(s, p)}\int_{t/2}^{t}(t-\tau)^{-\frac{1}{2}}(1+t-\tau)^{-\frac{1}{2}}(1+\tau)^{-\frac{3}{2}+\frac{1}{2p}-\frac{l}{2}}d\tau\\
&\le CE_{0}^{(s, p)}(1+t)^{-\frac{3}{2}+\frac{1}{2p}-\frac{l}{2}}\log(2+t), \ t\ge1.
\end{split}
\end{align}
Finally, we easily see form \eqref{2-12}
\begin{equation}
\label{3-49}
\|\p_{x}^{l}J_{6}(\cdot, t)\|_{L^{p}}\le CE_{0}^{(s, p)}(1+t)^{-\frac{3}{2}+\frac{1}{2p}-\frac{l}{2}}, \ t\ge0. 
\end{equation}
 Therefore, combining \eqref{3-41} through \eqref{3-44} and \eqref{3-47} through \eqref{3-49}, we arrive at 
 \begin{align}
 \label{3-50}
\|\p_{t}\p_{x}^{l}(u(\cdot, t)-\chi(\cdot, t))\|_{L^{p}}\le C\begin{cases}
(1+t)^{-\frac{\gamma}{2}+\frac{1}{2p}-\frac{1+l}{2}}, &t\ge1, \ 1<\gamma<2,\\
(1+t)^{-\frac{3}{2}+\frac{1}{2p}-\frac{l}{2}+\e}, &t\ge1, \ \gamma=2
\end{cases}
\end{align}
for $0\le l\le s-1$. For $0\le t\le 1$, we obtain from \eqref{2-10} and \eqref{2-12}
\begin{equation}
\label{3-51}
\|\p_{t}^{k}\p_{x}^{l}(u(\cdot, t)-\chi(\cdot, t))\|_{L^{p}}\le \|\p_{t}^{k}\p_{x}^{l}u(\cdot, t)\|_{L^{p}}+\|\p_{t}^{k}\p_{x}^{l}\chi(\cdot, t)\|_{L^{p}}\le CE_{0}^{(s, p)}\le C, 
\end{equation}
where $k=0, 1, 2$ and $l\ge0$ with $0\le k+l\le s$. Summing up \eqref{3-50} and \eqref{3-51}, we get \eqref{1-17} with $k=1$ and $0\le l\le s-1$. 

Next, we shall show \eqref{1-17} with $k=2$ and $0\le l\le s-2$. By using the integration by parts, in the same way to get \eqref{3-39}, we obtain 
\begin{align}
\label{3-52}
\begin{split}
\p_{t}^{2}u(t)&=\p_{t}^{2}G(t)*(u_{0}+u_{1})+\p_{t}^{3}G(t)*u_{0}-\int_{0}^{t/2}\p_{t}^{2}\p_{x}G(t-\tau)*g(u)(\tau)d\tau\\
&\ \ \ -\int_{t/2}^{t}\p_{t}G(t-\tau)*\p_{t}\p_{x}(g(u))(\tau)d\tau-\p_{t}\p_{x}G\left(\frac{t}{2}\right)*\left(g(u)\right)\left(\frac{t}{2}\right),
\end{split}
\end{align}
\begin{align}
\label{3-53}
\begin{split}
\p_{t}^{2}\chi(t)&=\p_{t}^{2}G_{0}(t)*\chi_{0}-\frac{b}{2}\int_{0}^{t/2}\p_{t}^{2}\p_{x}G_{0}(t-\tau)*(\chi^{2})(\tau)d\tau-\frac{b}{2}\p_{t}\p_{x}(\chi^{2})(t)\\
&\ \ \ -\frac{b}{2}\int_{t/2}^{t}\p_{t}G_{0}(t-\tau)*\p_{t}\p_{x}(\chi^{2})(\tau)d\tau-\frac{b}{2}\p_{t}\p_{x}G_{0}\left(\frac{t}{2}\right)*\left(\chi^{2}\right)\left(\frac{t}{2}\right).
\end{split}
\end{align}
Thus, from \eqref{3-52} and \eqref{3-53}, we have 
\begin{align}
\label{3-54}
\begin{split}
&\p_{t}^{2}(u(t)-\chi(t))\\
&=\p_{t}^{2}(G-G_{0})(t)*(u_{0}+u_{1})+\p_{t}^{2}G_{0}(t)*(u_{0}+u_{1}-\chi_{0})+\p_{t}^{3}G(t)*u_{0}\\
&\ \ \ -\int_{0}^{t/2}\p_{t}^{2}\p_{x}G(t-\tau)*\left(g(u)-\frac{b}{2}\chi^{2}\right)(\tau)d\tau-\int_{t/2}^{t}\p_{t}G(t-\tau)*\p_{t}\p_{x}\left(g(u)-\frac{b}{2}\chi^{2}\right)(\tau)d\tau\\
&\ \ \ -\frac{b}{2}\int_{0}^{t/2}\p_{t}^{2}\p_{x}(G-G_{0})(t-\tau)*(\chi^{2})(\tau)d\tau-\frac{b}{2}\int_{t/2}^{t}\p_{t}(G-G_{0})(t-\tau)*\p_{t}\p_{x}(\chi^{2})(\tau)d\tau\\
&\ \ \ +\frac{b}{2}\p_{t}\p_{x}(\chi^{2})-\p_{t}\p_{x}G\left(\frac{t}{2}\right)*\left(g(u)-\frac{b}{2}\chi^{2}\right)\left(\frac{t}{2}\right)-\frac{b}{2}\p_{t}\p_{x}(G-G_{0})\left(\frac{t}{2}\right)*(\chi^{2})\left(\frac{t}{2}\right)\\
&=\tilde{J_{1}}+\tilde{J_{2}}+\tilde{J_{3}}+\tilde{J_{4}}+\tilde{J_{5}}+\tilde{J_{6}}+\tilde{J_{7}}+\tilde{J_{8}}+\tilde{J_{9}}+\tilde{J_{10}}.
\end{split}
\end{align}
Similarly as \eqref{3-46}, we note that 
\begin{align}
\label{3-55}
\biggl\|\p_{t}\p_{x}^{l}\biggl(g(u)-\frac{b}{2}\chi^{2}\biggl)(\cdot, t)\biggl\|_{L^{p}}&\le C\begin{cases}
(1+t)^{-\frac{\gamma}{2}+\frac{1}{2p}-\frac{l}{2}-1}, &t\ge0, \ 1<\gamma<2,\\
(1+t)^{-2+\frac{1}{2p}-\frac{l}{2}+\e}, &t\ge0, \ \gamma=2
\end{cases}
\end{align}
for $0\le l \le s-1$. By using the same argument given in the above paragraph, we have the following estimates:
\begin{equation*}
\|\p_{x}^{l}\tilde{J_{1}}(\cdot, t)\|_{L^{p}}+\|\p_{x}^{l}\tilde{J_{3}}(\cdot, t)\|_{L^{p}}+\|\p_{x}^{l}\tilde{J_{6}}(\cdot, t)\|_{L^{p}}+\|\p_{x}^{l}\tilde{J_{8}}(\cdot, t)\|_{L^{p}}\le C(1+t)^{-2+\frac{1}{2p}-\frac{l}{2}}, \ t\ge1, 
\end{equation*}
\begin{align*}
\|\p_{x}^{l}\tilde{J_{2}}(\cdot, t)\|_{L^{p}}&\le C\begin{cases}
(1+t)^{-\frac{\gamma}{2}+\frac{1}{2p}-\frac{l}{2}-1}, &t\ge1, \ 1<\gamma<2,\\
(1+t)^{-2+\frac{1}{2p}-\frac{l}{2}}\log(2+t), &t\ge1, \ \gamma=2,
\end{cases}
\end{align*}
\begin{align*}
\|\p_{x}^{l}\tilde{J_{4}}(\cdot, t)\|_{L^{p}}+\|\p_{x}^{l}\tilde{J_{5}}(\cdot, t)\|_{L^{p}}&\le C\begin{cases}
(1+t)^{-\frac{\gamma}{2}+\frac{1}{2p}-\frac{l}{2}-1}, &t\ge0, \ 1<\gamma<2,\\
(1+t)^{-2+\frac{1}{2p}-\frac{l}{2}+\e}, &t\ge0, \ \gamma=2
\end{cases}
\end{align*}
and
\begin{equation*}
\|\p_{x}^{l}\tilde{J_{7}}(\cdot, t)\|_{L^{p}}\le C(1+t)^{-2+\frac{1}{2p}-\frac{l}{2}}\log(2+t), \ t\ge1.
\end{equation*}
On the other hand for $\tilde{J_{9}}$, from \eqref{2-5} and \eqref{3-45}, we get
\begin{align*}
\begin{split}
\|\p_{x}^{l}\tilde{J_{9}}(\cdot, t)\|_{L^{p}}&\le C(1+t)^{-\frac{3}{2}+\frac{1}{2p}-\frac{l}{2}}\biggl\|\left(g(u)-\frac{b}{2}\chi^{2}\right)\left(\cdot, \frac{t}{2}\right)\biggl\|_{L^{1}}\\
&\le C\begin{cases}
(1+t)^{-\frac{\gamma}{2}+\frac{1}{2p}-\frac{l}{2}-\frac{3}{2}}, &t\ge0, \ 1<\gamma<2,\\
(1+t)^{-\frac{5}{2}+\frac{1}{2p}-\frac{l}{2}+\e}, &t\ge0, \ \gamma=2.
\end{cases}
\end{split}
\end{align*}
For $\tilde{J_{10}}$, it follows that from \eqref{2-7} and \eqref{2-12}
\begin{align*}
\begin{split}
\|\p_{x}^{l}\tilde{J_{10}}(\cdot, t)\|_{L^{p}}&\le Ct^{-\frac{3}{2}+\frac{1}{2p}-\frac{l}{2}}(1+t)^{-\frac{1}{2}}\left(\biggl\|\chi^{2}\left(\cdot, \frac{t}{2}\right)\biggl\|_{L^{1}}+\ \biggl\|\chi^{2}\left(\cdot, \frac{t}{2}\right)\biggl\|_{W^{l+1,p}}\right)\\
&\le C(1+t)^{-\frac{5}{2}+\frac{1}{2p}-\frac{l}{2}}, \ t\ge1. 
\end{split}
\end{align*}
Therefore, combining all the above estimates, we can derive  
 \begin{align}
 \label{3-56}
\|\p_{t}^{2}\p_{x}^{l}(u(\cdot, t)-\chi(\cdot, t))\|_{L^{p}}\le C\begin{cases}
(1+t)^{-\frac{\gamma}{2}+\frac{1}{2p}-\frac{l}{2}-1}, &t\ge1, \ 1<\gamma<2,\\
(1+t)^{-2+\frac{1}{2p}-\frac{l}{2}+\e}, &t\ge1, \ \gamma=2
\end{cases}
\end{align}
for $0\le l\le s-2$. Thus, summing up \eqref{3-56} and \eqref{3-51}, we can prove \eqref{1-17} with $k=2$. This completes the proof.
\end{proof}

Finally in this section, we give the additional decay estimate for $u-\chi$. From the original equations \eqref{1-2} and \eqref{1-6}, we see that 
\begin{equation*}
(\p_{t}+a\p_{x})(u-\chi)=(-\p_{t}^{2}+\p_{x}^{2})(u-\chi)-\frac{b}{2}\p_{x}(u^{2}-\chi^{2})-\frac{c}{3!}\p_{x}(u^{3})-(\p_{t}-a\p_{x})(\p_{t}+a\p_{x})\chi.
\end{equation*}
By virtue of this relation, we have the following estimate:
\begin{cor}\label{cor.add}
Assume the same conditions on $u_{0}$ and $u_{1}$ in Theorem~\ref{main1} are valid. Then, for any $\e>0$, the estimate 
\begin{align}
\label{3-57}
\|\p_{x}^{l}(\p_{t}+a\p_{x})((u-\chi)(\cdot, t))\|_{L^{p}}\le C\begin{cases}
(1+t)^{-\frac{\min\{\alpha, \beta\}}{2}+\frac{1}{2p}-\frac{l}{2}-1}, &t\ge0, \ 1<\min\{\alpha, \beta\}<2,\\
(1+t)^{-2+\frac{1}{2p}-\frac{l}{2}+\e}, &t\ge0, \ \min\{\alpha, \beta\}=2
\end{cases}
\end{align}
holds for $0\le l\le s-2$. 
\end{cor}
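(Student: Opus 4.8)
The plan is to read off \eqref{3-57} directly from the identity displayed immediately before the statement,
\begin{equation*}
(\p_t+a\p_x)(u-\chi)=(-\p_t^2+\p_x^2)(u-\chi)-\frac{b}{2}\p_x(u^2-\chi^2)-\frac{c}{3!}\p_x(u^3)-(\p_t-a\p_x)(\p_t+a\p_x)\chi,
\end{equation*}
by applying $\p_x^l$ and the triangle inequality and then bounding each of the four groups on the right with the estimates already proved in this section. Writing $\phi=u-\chi$ and $\gamma\equiv\min\{\alpha,\beta\}$, the whole proof reduces to matching decay exponents for $0\le l\le s-2$, up to a logarithmic loss absorbed into $\e$ in the case $\gamma=2$.

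For the linear-in-$\phi$ part $\p_x^l(-\p_t^2+\p_x^2)\phi$ I would estimate $\p_t^2\p_x^l\phi$ by \eqref{3-56} and $\p_x^{l+2}\phi$ by \eqref{3-27} with $l$ replaced by $l+2$ (legitimate since $l+2\le s$); both yield exactly $(1+t)^{-\frac{\gamma}{2}+\frac{1}{2p}-\frac{l}{2}-1}$ for $1<\gamma<2$ and $(1+t)^{-2+\frac{1}{2p}-\frac{l}{2}+\e}$ for $\gamma=2$, which is precisely the target rate. The quadratic term $\p_x^l\p_x(u^2-\chi^2)=\p_x^{l+1}(u^2-\chi^2)$ is controlled by \eqref{3-34} with $l$ replaced by $l+1$ (here $l+1\le s-1$), using that $L(T)$ is now a fixed finite constant; this again reproduces the target rate. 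The cubic term $\p_x^{l+1}(u^3)$ is handled by the bound $\|\p_x^m(u^3)\|_{L^p}\le CE_0^{(s,p)}(1+t)^{-\frac{3}{2}+\frac{1}{2p}-\frac{m}{2}}$ already used in \eqref{3-32}, which with $m=l+1$ gives $(1+t)^{-2+\frac{1}{2p}-\frac{l}{2}}$; since $\gamma<2$ this decays strictly faster than the target and is therefore harmless, while for $\gamma=2$ it is dominated by the $\e$-relaxed rate.

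The one point requiring genuine care, and which I expect to be the only real obstacle, is the nonlinear-diffusion-wave term $(\p_t-a\p_x)(\p_t+a\p_x)\chi$. A naive bound treating $\p_t^2\chi$ and $\p_x^2\chi$ separately gives only $(1+t)^{-\frac{3}{2}+\frac{1}{2p}-\frac{l}{2}}$, which is too slow for $1<\gamma<2$ because $-\frac{\gamma}{2}-1<-\frac{3}{2}$. The remedy is to keep the factored form and apply \eqref{2-12} with $m=1$: expanding $(\p_t-a\p_x)(\p_t+a\p_x)\chi=\p_t(\p_t+a\p_x)\chi-a\p_x(\p_t+a\p_x)\chi$ and invoking \eqref{2-12} for $\p_x^l\p_t(\p_t+a\p_x)\chi$ (with $k=1$, $m=1$) and for $\p_x^{l+1}(\p_t+a\p_x)\chi$ (with $k=0$, $m=1$), both contributions gain the extra power coming from $2m=2$ and decay like $(1+t)^{-2+\frac{1}{2p}-\frac{l}{2}}$. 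This exploits the transport structure of $\chi$ (motion at speed $a$), so that $(\p_t+a\p_x)\chi$ is governed by diffusion and is smaller than a generic second derivative; consequently this term decays faster than the target for $\gamma<2$ and is dominated for $\gamma=2$.

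Finally I would collect the four estimates: every contribution is bounded by the claimed rate, with the $(-\p_t^2+\p_x^2)\phi$ and $\p_x(u^2-\chi^2)$ terms being exactly of that order and the $u^3$ and $\chi$ terms being faster. The logarithms appearing for $\gamma=2$ are absorbed through $\log(2+t)\le C(1+t)^\e$. This yields \eqref{3-57} for $t\ge1$, and the range $0\le t\le1$ follows from \eqref{2-10} and \eqref{2-12} exactly as in \eqref{3-51}, completing the proof.
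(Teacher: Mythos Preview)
Your proposal is correct and follows precisely the route the paper intends: the paper merely displays the identity and states the corollary without further argument, and you have filled in the details exactly as indicated, invoking \eqref{3-27}, \eqref{3-34}, \eqref{3-56}, and the $m=1$ case of \eqref{2-12}. Your remark that the $\chi$ term must be handled via the factored form $(\p_t-a\p_x)(\p_t+a\p_x)\chi$ rather than by separating $\p_t^2\chi$ and $\p_x^2\chi$ is the one nontrivial point, and you treat it correctly.
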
 
\noindent
We will use this estimate in the proof of Theorem~\ref{main2}.

\section{Proof of Theorem 1.2 for $1<\min\{\alpha, \beta\}< 2$}

In this section, we shall prove Theorem~\ref{main2} in the case of $1<\min\{\alpha, \beta\}<2$. Namely, our purpose of this section is to derive \eqref{1-19} and \eqref{1-23}. First, we set 
\begin{equation}
\label{4-1}
\psi(x, t)\equiv u(x, t)+u_{t}(x, t)-\chi(x, t), \ \ \psi_{0}(x)\equiv u_{0}(x)+u_{1}(x)-\chi_{0}(x), 
\end{equation}
where $\chi_{0}$ is defined by $\chi_{0}(x)=\chi(x, 0)$. Then we have the following initial value problem:
\begin{align}
\label{4-2}
\begin{split}
\psi_{t}+a\psi_{x}+(b\chi \psi)_{x}-\mu \psi_{xx}&=a\p_{x}(\p_{t}+a\p_{x})(u-\chi)+a\p_{x}(\p_{t}+a\p_{x})\chi\\
&\ \ \ -\frac{b}{2}\p_{x}((u-\chi)^{2})-\frac{c}{3!}\p_{x}(u^{3})+\p_{x}(b\chi u_{t}-\mu u_{tx}), \ x\in \R, \ t>0,\\
\psi(x, 0)&=\psi_{0}(x), \ x\in \R.\\
\end{split}
\end{align}
Therefore, from Lemma~\ref{RF}, we obtain 
\begin{align}
\label{4-3}
\begin{split}
\psi(x, t)=&\ U[\psi_{0}](x, t, 0)\\
&+a\int_{0}^{t}U[\p_{x}(\p_{t}+a\p_{x})(u-\chi)(\tau)](x, t, \tau)d\tau+a\int_{0}^{t}U[\p_{x}(\p_{t}+a\p_{x})\chi(\tau)](x, t, \tau)d\tau\\
&-\frac{b}{2}\int_{0}^{t}U[\p_{x}((u-\chi)^{2})(\tau)](x, t, \tau)d\tau-\frac{c}{3!}\int_{0}^{t}U[\p_{x}(u^{3})(\tau)](x, t, \tau)d\tau\\
&+b\int_{0}^{t}U[\p_{x}(u_{t}\chi)(\tau)](x, t, \tau)d\tau-\mu \int_{0}^{t}U[\p_{x}(u_{tx})(\tau)](x, t, \tau)d\tau.
\end{split}
\end{align}
For the first term of the right hand side in the above equation \eqref{4-3}, we have the following asymptotic formula. The proof is given by almost the same argument as in Proposition 4.1 in \cite{F19-2}. 
\begin{prop}\label{AF}
Assume the same conditions on $u_{0}$ and $u_{1}$ in Theorem~\ref{main2} are valid. Then we have 
\begin{align}
\label{4-4}
&\lim_{t\to \infty}(1+t)^{\frac{\min\{\alpha, \beta\}}{2}}\|U[\psi_{0}](\cdot, t, 0)-Z(\cdot, t)\|_{L^{\infty}}=0, \ 1<\min\{\alpha, \beta\}<2, \\
\label{4-5}
&\lim_{t\to \infty}\frac{(1+t)}{\log(1+t)}\|U[\psi_{0}](\cdot, t, 0)-Z(\cdot, t)\|_{L^{\infty}}=0, \ \min\{\alpha, \beta\}=2,  
\end{align}
where $Z(x, t)$ is defined by \eqref{1-21}. 
\end{prop}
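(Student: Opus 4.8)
The plan is to reduce the statement to an estimate for a single convolution-type integral and then to extract the leading behaviour by a self-similar rescaling combined with a dominated-convergence argument.

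First I would identify the integrand explicitly. Setting $\tau=0$ in \eqref{2-26} and recalling \eqref{1-18}, the factor $(\eta(y,0))^{-1}\int_{-\infty}^{y}\psi_{0}(\xi)d\xi$ is precisely $z_{0}(y)$, so that
\begin{equation*}
U[\psi_{0}](x,t,0)=\int_{\R}\p_{x}(G_{0}(x-y,t)\eta(x,t))z_{0}(y)dy.
\end{equation*}
Since $Z(x,t)$ in \eqref{1-21} has the identical structure with $z_{0}(y)$ replaced by its limiting tail profile $c_{\alpha,\beta}(y)(1+|y|)^{-(\gamma-1)}$, where $\gamma\equiv\min\{\alpha,\beta\}$, the difference is
\begin{equation*}
U[\psi_{0}](\cdot,t,0)-Z(\cdot,t)=\int_{\R}\p_{x}(G_{0}(x-y,t)\eta(x,t))w(y)dy,\quad w(y)\equiv z_{0}(y)-\frac{c_{\alpha,\beta}(y)}{(1+|y|)^{\gamma-1}}.
\end{equation*}
The role of the hypothesis on $z_{0}$ is that $w$ decays strictly faster than its size bound: using $\int_{\R}\psi_{0}dx=0$ (which forces $|\int_{-\infty}^{y}\psi_{0}|\le C(1+|y|)^{-(\gamma-1)}$ via \eqref{1-3} and the Gaussian decay of $\chi_{0}$) together with \eqref{2-22}, one gets $|w(y)|\le C(1+|y|)^{-(\gamma-1)}$, while the existence of $\lim_{x\to\pm\infty}(1+|x|)^{\gamma-1}z_{0}(x)=c^{\pm}_{\alpha,\beta}$ gives the crucial gain $(1+|y|)^{\gamma-1}w(y)\to 0$ as $|y|\to\infty$.

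Next I would split $\p_{x}(G_{0}\eta)=\eta\,\p_{x}G_{0}+(\p_{x}\eta)G_{0}$ and treat the two pieces using \eqref{2-21} (boundedness of $\eta$) and Lemma~\ref{eta.decay} (which yields $\|\p_{x}\eta(\cdot,t)\|_{L^{\infty}}\le C(1+t)^{-1/2}$). In each piece I would pass to the self-similar variables $x=at+\sqrt{t}\,\xi$ and $y=\sqrt{t}\,\zeta$, under which $G_{0}(x-y,t)=t^{-1/2}\tilde g(\xi-\zeta)$ with $\tilde g(s)=(4\pi\mu)^{-1/2}e^{-s^{2}/(4\mu)}$ and $\p_{x}G_{0}(x-y,t)=t^{-1}g(\xi-\zeta)$ with $g(s)=-s(2\mu)^{-1}\tilde g(s)$. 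Absorbing the Jacobian $dy=\sqrt{t}\,d\zeta$, the $\p_{x}G_{0}$-piece becomes $\eta\cdot t^{-1/2}\int_{\R}g(\xi-\zeta)w(\sqrt{t}\zeta)d\zeta$ and the $G_{0}$-piece becomes $(\p_{x}\eta)\cdot\int_{\R}\tilde g(\xi-\zeta)w(\sqrt{t}\zeta)d\zeta$. Thus everything reduces to controlling $\int_{\R}k(\xi-\zeta)w(\sqrt{t}\zeta)d\zeta$ uniformly in $\xi$ for the Gaussian-type kernels $k=g,\tilde g$, the two pieces then carrying the bounded factor $\eta$ together with $t^{-1/2}$, respectively the decaying factor $\p_{x}\eta=O(t^{-1/2})$.

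For $1<\gamma<2$ I would show $t^{(\gamma-1)/2}\sup_{\xi}|\int_{\R}k(\xi-\zeta)w(\sqrt{t}\zeta)d\zeta|\to 0$ by a three-region split in $\zeta$: on $|\zeta|<\delta$ the size bound $|w|\le C(1+|\cdot|)^{-(\gamma-1)}$ and $\int_{|\zeta|<\delta}(1+\sqrt{t}|\zeta|)^{-(\gamma-1)}d\zeta\sim C\delta^{2-\gamma}t^{(1-\gamma)/2}$ contribute $O(\delta^{2-\gamma})$, small for small $\delta$; on $\delta\le|\zeta|\le R$ the vanishing of $(1+|y|)^{\gamma-1}w(y)$ gives $t^{(\gamma-1)/2}|w(\sqrt{t}\zeta)|\le\delta^{-(\gamma-1)}o(1)$ uniformly; and on $|\zeta|>R$ one has $t^{(\gamma-1)/2}|w(\sqrt{t}\zeta)|\le C|\zeta|^{-(\gamma-1)}\le CR^{-(\gamma-1)}$, integrable against $k$. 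Letting $\delta\to0$, $R\to\infty$, then $t\to\infty$ gives $\int k\,w = o(t^{-(\gamma-1)/2})$, so both pieces are $o(t^{-\gamma/2})$ and \eqref{4-4} follows. The borderline $\gamma=2$ is the main obstacle and must be handled separately, since the small-$\zeta$ region now produces a logarithm, $\int_{|\zeta|<\delta}(1+\sqrt{t}|\zeta|)^{-1}d\zeta\sim t^{-1/2}\log t$, which matches the target rate and cannot be absorbed by the crude size bound. To gain the required smallness I would use the vanishing of $(1+|y|)w(y)$ directly: writing $w(\sqrt{t}\zeta)=(1+\sqrt{t}|\zeta|)^{-1}h(\sqrt{t}|\zeta|)$ with $h(r)\to 0$, fixing $A$ so that $h<\e$ for $r>A$, and splitting at $|\zeta|=A/\sqrt{t}$, one bounds $|\int_{\R}k(\xi-\zeta)w(\sqrt{t}\zeta)d\zeta|\le \e C t^{-1/2}\log(2+t)+CAt^{-1/2}$, whence $\frac{t}{\log(1+t)}\cdot t^{-1/2}|\int k\,w\,d\zeta|\to 0$ as $t\to\infty$ because $\e$ is arbitrary, giving \eqref{4-5}. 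The delicate point throughout is that all bounds are uniform in $\xi$ (equivalently in $x$), which is exactly why I organise the split in the $\zeta$-variable rather than relative to the Gaussian's centre; this is the same mechanism as in Proposition~4.1 of \cite{F19-2}.
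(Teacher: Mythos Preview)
Your argument is correct and close in spirit to the paper's, though organised differently. The paper does not rescale: it fixes $R=R(\e)$ so that $|w(y)|\le\e(1+|y|)^{-(\gamma-1)}$ for $|y|\ge R$, then splits the $y$-integral at $|y|=R$. The inner piece is bounded crudely by $\|\p_x^n G_0(\cdot,t)\|_{L^\infty}\int_{|y|\le R}|w|\,dy\le C(1+t)^{-1}$, while the outer piece carries the factor $\e$ and is controlled by the single convolution estimate
\[
\int_{\R}|\p_x^n G_0(x-y,t)|(1+|y|)^{-(\gamma-1)}dy\le
\begin{cases}
C(1+t)^{-\frac{\gamma-1}{2}-\frac{n}{2}}, & 1<\gamma<2,\\
C(1+t)^{-\frac{1+n}{2}}\log(1+t), & \gamma=2,
\end{cases}
\]
obtained by a further split at $|y|=\sqrt{1+t}-1$. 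This yields $\|U[\psi_0]-Z\|_{L^\infty}\le C(1+t)^{-1}+\e C\cdot(\text{target rate})$, and arbitrariness of $\e$ finishes. Your rescaled three-region split (for $1<\gamma<2$) is a variant of the same mechanism with thresholds $|y|\sim\delta\sqrt t$ and $|y|\sim R\sqrt t$ instead of the paper's fixed $R$ and $\sqrt t$; your $\gamma=2$ argument, once unwound via $y=\sqrt t\,\zeta$, is essentially the paper's split at $|y|=A$ followed by the same logarithmic convolution bound. The paper's route is slightly leaner (one small parameter $\e$ rather than two), but both are valid.

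One small slip: the order of limits in your $1<\gamma<2$ case should be $t\to\infty$ first (annihilating the middle region for fixed $\delta,R$), then $\delta\to0$ and $R\to\infty$; as you wrote it, sending $\delta\to0$ before $t\to\infty$ would invalidate the middle-region bound.
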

\begin{proof}
From the definition of $U$ given by \eqref{2-26} and $\eta_{0}(x)=\eta(x, 0)$, we have 
\begin{align}
\label{4-6}
\begin{split}
U[\psi_{0}](x, t, 0)=&\int_{\R}\p_{x}(G_{0}(x-y, t)\eta(x, t))\eta_{0}(y)^{-1}\biggl(\int_{-\infty}^{y}(u_{0}(\xi)+u_{1}(\xi)-\chi_{0}(\xi))d\xi\biggl) dy\\
=&\int_{\R}\p_{x}(G_{0}(x-y, t)\eta(x, t))z_{0}(y)dy, 
\end{split}
\end{align} 
where $z_{0}(y)$ is defined by \eqref{1-18}. First, we shall check the following estimate: 
\begin{equation}
\label{4-7}
|z_{0}(x)|\le C(1+|x|)^{-(\min\{\alpha, \beta\}-1)}, \ \ x\in \R. 
\end{equation}
If $x<0$, from \eqref{2-22}, \eqref{1-3}, \eqref{1-4} and \eqref{1-5}, we have
\begin{align*}
|z_{0}(x)|&\le C\int_{-\infty}^{x}(|u_{0}(y)|+|u_{1}(y)|+|\chi_{0}(y)|)dy \\
&\le C\int_{-\infty}^{x}(1+|y|)^{-\min\{\alpha, \beta\}}dy+C\int_{-\infty}^{x}(1+|y|)^{-N}dy \ \ (\forall N\ge0) \\
&\le C\int_{-\infty}^{x}(1-y)^{-\min\{\alpha, \beta\}}dy\le C(1-x)^{-(\min\{\alpha, \beta\}-1)}=C(1+|x|)^{-(\min\{\alpha, \beta\}-1)}. 
\end{align*}
On the other hand, since $\int_{\R}(u_{0}(x)+u_{1}(x))dx=\int_{\R}\chi_{0}(x)dx=M$, if $x>0$, similarly we have
\begin{align*}
|z_{0}(x)|&\le C\biggl|\int_{-\infty}^{x}(u_{0}(y)+u_{1}(y)-\chi_{0}(y))dy\biggl| \\
&=C\biggl|\int_{-\infty}^{x}(u_{0}(y)+u_{1}(y))dy-\int_{\R}(u_{0}(y)+u_{1}(y))dy-\int_{-\infty}^{x}\chi_{0}(y)dy+\int_{\R}\chi_{0}(y)dy\biggl| \\
&\le C\biggl|\int_{x}^{\infty}(u_{0}(y)+u_{1}(y))dy\biggl|+\biggl|\int_{x}^{\infty}\chi_{0}(y)dy\biggl| \\
&\le C\int_{x}^{\infty}(1+|y|)^{-\min\{\alpha, \beta\}}dy+C\int_{x}^{\infty}(1+|y|)^{-N}dy \ \ (\forall N\ge0) \\
&\le C\int_{x}^{\infty}(1+y)^{-\min\{\alpha, \beta\}}dy\le C(1+x)^{-(\min\{\alpha, \beta\}-1)}=C(1+|x|)^{-(\min\{\alpha, \beta\}-1)}. 
\end{align*}
Therefore we get \eqref{4-7}. Thus, we obtain the boundedness of $(1+|y|)^{\min\{\alpha, \beta\}-1}z_{0}(y)$. Moreover, from the assumption on $z_{0}(y)$, for any $\e>0$ there is a constant $R=R(\e)>0$ such that
 \begin{align*}
&|z_{0}(y)-c_{\alpha, \beta}^{+}(1+|y|)^{-(\min\{\alpha, \beta\}-1)}|\le \e(1+|y|)^{-(\min\{\alpha, \beta\}-1)}, \ y\ge R, \\
&|z_{0}(y)-c_{\alpha, \beta}^{-}(1+|y|)^{-(\min\{\alpha, \beta\}-1)}|\le \e(1+|y|)^{-(\min\{\alpha, \beta\}-1)}, \ y\le -R. 
 \end{align*}
 From \eqref{1-21} and \eqref{4-6}, we have the following estimate
 \begin{align}
 \label{4-8}
 \begin{split}
&|U[\psi_{0}](x, t, 0)-Z(x, t)|\\
 &\le \int_{\R}|\p_{x}(G_{0}(x-y, t)\eta(x, t))||z_{0}(y)-c_{\alpha, \beta}(y)(1+|y|)^{-(\min\{\alpha, \beta\}-1)}|dy\\
 &\le \int_{|y|\le R}|\p_{x}(G_{0}(x-y, t)\eta(x, t))||z_{0}(y)-c_{\alpha, \beta}(y)(1+|y|)^{-(\min\{\alpha, \beta\}-1)}|dy\\
 &\ \ \ \ +\e \int_{|y|\ge R}|\p_{x}(G_{0}(x-y, t)\eta(x, t))|(1+|y|)^{-(\min\{\alpha, \beta\}-1)}dy \\
 &\le C\sum_{n=0}^{1}\|\p_{x}^{1-n}\eta(\cdot, t)\|_{L^{\infty}}\|\p_{x}^{n}G_{0}(\cdot, t)\|_{L^{\infty}}\int_{|y|\le R}|z_{0}(y)-c_{\alpha, \beta}(y)(1+|y|)^{-(\min\{\alpha, \beta\}-1)}|dy\\
 &\ \ \ +\e C\sum_{n=0}^{1}\|\p_{x}^{1-n}\eta(\cdot, t)\|_{L^{\infty}}\int_{\R}|\p_{x}^{n}G_{0}(x-y, t)|(1+|y|)^{-(\min\{\alpha, \beta\}-1)}dy.\\
 \end{split}
 \end{align}
 For the integral in the last term of the right hand side of \eqref{4-8}, we can estimate it as follows 
 \begin{align}
 \label{4-9}
\begin{split}
&\int_{\R} |\p_{x}^{n}G_{0}(x-y, t)|(1+|y|)^{-(\min\{\alpha, \beta\}-1)}dy \\
&=\biggl(\int_{|y|\ge \sqrt{1+t}-1}+\int_{|y|\le \sqrt{1+t}-1}\biggl)|\p_{x}^{n}G_{0}(x-y, t)|(1+|y|)^{-(\min\{\alpha, \beta\}-1)}dy \\
&\le \biggl(\sup_{|y|\ge \sqrt{1+t}-1}(1+|y|)^{-(\min\{\alpha, \beta\}-1)}\biggl)\int_{|y|\ge \sqrt{1+t}-1}|\p_{x}^{n}G_{0}(x-y, t)|dy\\
&\ \ \ \ +\biggl(\sup_{|y|\le \sqrt{1+t}-1}|\p_{x}^{n}G_{0}(x-y, t)|\biggl)\int_{|y|\le \sqrt{1+t}-1}(1+|y|)^{-(\min\{\alpha, \beta\}-1)}dy\\
&\le (1+t)^{-\frac{\min\{\alpha, \beta\}-1}{2}}\|\p_{x}^{n}G_{0}(\cdot, t)\|_{L^{1}}+\|\p_{x}^{n}G_{0}(\cdot, t)\|_{L^{\infty}}\int_{|y|\le \sqrt{1+t}-1}(1+|y|)^{-(\min\{\alpha, \beta\}-1)}dy\\
&\le C(1+t)^{-\frac{\min\{\alpha, \beta\}-1}{2}-\frac{n}{2}}+Ct^{-\frac{1+n}{2}}\int_{0}^{\sqrt{1+t}-1}(1+y)^{-(\min\{\alpha, \beta\}-1)}dy \\
&\le C(1+t)^{-\frac{\min\{\alpha, \beta\}-1}{2}-\frac{n}{2}}+Ct^{-\frac{1+n}{2}}\begin{cases}
(1+t)^{-\frac{\min\{\alpha, \beta\}-1}{2}+\frac{1}{2}}, &1<\min\{\alpha, \beta\}<2,  \\
\log(1+t), &\min\{\alpha, \beta\}=2
\end{cases}\\
&\le C\begin{cases}
(1+t)^{-\frac{\min\{\alpha, \beta\}-1}{2}-\frac{n}{2}}, &t\ge1, \ 1<\min\{\alpha, \beta\}<2, \\
(1+t)^{-\frac{1+n}{2}}\log(1+t), &t\ge1, \ \min\{\alpha, \beta\}=2.
\end{cases}
\end{split}
\end{align} 
Therefore, by using \eqref{4-8}, Lemma~\ref{eta.decay}, Lemma~\ref{heat.decay} and \eqref{4-9}, we get 
  \begin{align*}
 \begin{split}
 &\|U[\psi_{0}](\cdot, t, 0)-Z(\cdot, t)\|_{L^{\infty}}\\
 &\le C(1+t)^{-1}+\e C\begin{cases}
(1+t)^{-\frac{\min\{\alpha, \beta\}}{2}}, &t\ge1, \ 1<\min\{\alpha, \beta\}<2, \\
(1+t)^{-1}\log(1+t), &t\ge1, \ \min\{\alpha, \beta\}=2.\end{cases}
  \end{split}
 \end{align*}
 Thus, we obtain 
 \begin{align*}
&\limsup_{t\to \infty}(1+t)^{\frac{\min\{\alpha, \beta\}}{2}}\|U[\psi_{0}](\cdot, t, 0)-Z(\cdot, t)\|_{L^{\infty}}\le\e C, \ 1<\min\{\alpha, \beta\}<2, \\
&\limsup_{t\to \infty}\frac{(1+t)}{\log(1+t)}\|U[\psi_{0}](\cdot, t, 0)-Z(\cdot, t)\|_{L^{\infty}} \le\e C, \ \min\{\alpha, \beta\}=2.  
\end{align*}
Therefore, we get \eqref{4-4} and \eqref{4-5}, because $\e>0$ can be chosen arbitrarily small. 
\end{proof}

\newpage
\begin{proof}[\rm{\bf{End of the proof of Theorem 1.2 for $1<\min\{\alpha, \beta\}<2$}}]

We shall prove \eqref{1-19} and \eqref{1-23}. By using \eqref{1-21} and \eqref{4-3}, we have 
\begin{align}
\label{4-10}
\begin{split}
&u(x, t)-\chi(x, t)-Z(x, t)\\
&=U[\psi_{0}](x, t, 0)-Z(x, t)-u_{t}(x, t)\\
&\ \ \ \ +a\int_{0}^{t}U[\p_{x}(\p_{t}+a\p_{x})(u-\chi)(\tau)](x, t, \tau)d\tau+a\int_{0}^{t}U[\p_{x}(\p_{t}+a\p_{x})\chi(\tau)](x, t, \tau)d\tau\\
&\ \ \ \ -\frac{b}{2}\int_{0}^{t}U[\p_{x}((u-\chi)^{2})(\tau)](x, t, \tau)d\tau-\frac{c}{3!}\int_{0}^{t}U[\p_{x}(u^{3})(\tau)](x, t, \tau)d\tau\\
&\ \ \ \ +b\int_{0}^{t}U[\p_{x}(u_{t}\chi)(\tau)](x, t, \tau)d\tau-\mu\int_{0}^{t}U[\p_{x}(u_{tx})(\tau)](x, t, \tau)d\tau\\
&\equiv U[\psi_{0}](x, t, 0)-Z(x, t)-u_{t}(x, t)+J_{1}+J_{2}+J_{3}+J_{4}+J_{5}+J_{6}.
\end{split}
\end{align}

We shall evaluate $J_{1}$, $J_{2}$, $J_{3}$, $J_{4}$, $J_{5}$ and $J_{6}$. First for $J_{1}$, from Lemma~\ref{N.decay} and \eqref{3-57}, we have 
\begin{align}
\label{4-11}
\begin{split}
\|J_{1}(\cdot, t)\|_{L^{\infty}}&\le C\sum_{n=0}^{1}(1+t)^{-\frac{1}{2}+\frac{n}{2}}\biggl(\int_{0}^{t/2}(t-\tau)^{-\frac{1}{2}-\frac{n}{2}}\| (\p_{t}+a\p_{x})((u-\chi)(\cdot, \tau))\|_{L^{1}}d\tau\\
&\ \ \ \ +\int_{t/2}^{t}(t-\tau)^{-\frac{1}{4}-\frac{n}{2}}\| (\p_{t}+a\p_{x})((u-\chi)(\cdot, \tau))\|_{L^{2}}d\tau \biggl) \\
&\le C\sum_{n=0}^{1}(1+t)^{-\frac{1}{2}+\frac{n}{2}}\biggl(\int_{0}^{t/2}(t-\tau)^{-\frac{1}{2}-\frac{n}{2}}(1+\tau)^{-\frac{\min\{\alpha, \beta\}+1}{2}}d\tau\\
&\ \ \ \ +\int_{t/2}^{t}(t-\tau)^{-\frac{1}{4}-\frac{n}{2}}(1+\tau)^{-\frac{\min\{\alpha, \beta\}}{2}-\frac{3}{4}}d\tau \biggl) \\
&\le C(1+t)^{-1}, \ t\ge1.
\end{split}
\end{align}
On the other hand, we have from \eqref{2-12} that 
\begin{align}
\label{4-12}
\begin{split}
\|J_{2}(\cdot, t)\|_{L^{\infty}}&\le C\sum_{n=0}^{1}(1+t)^{-\frac{1}{2}+\frac{n}{2}}\biggl(\int_{0}^{t/2}(t-\tau)^{-\frac{1}{2}-\frac{n}{2}}\| (\p_{t}+a\p_{x})\chi(\cdot, \tau)\|_{L^{1}}d\tau\\
&\ \ \ \ +\int_{t/2}^{t}(t-\tau)^{-\frac{n}{2}}\| (\p_{t}+a\p_{x})\chi(\cdot, \tau)\|_{L^{\infty}}d\tau \biggl) \\
&\le C\sum_{n=0}^{1}(1+t)^{-\frac{1}{2}+\frac{n}{2}}\biggl(\int_{0}^{t/2}(t-\tau)^{-\frac{1}{2}-\frac{n}{2}}(1+\tau)^{-1}d\tau+\int_{t/2}^{t}(t-\tau)^{-\frac{n}{2}}(1+\tau)^{-\frac{3}{2}}d\tau \biggl) \\
&\le C(1+t)^{-1}\log(1+t), \ t\ge1.
\end{split}
\end{align}
We note that $J_{1}$ and $J_{2}$ do not appear if $a=0$. Next, from Lemma~\ref{N.decay} and \eqref{1-16}, we obtain 
\begin{align}
\label{4-13}
\begin{split}
\|J_{3}(\cdot, t)\|_{L^{\infty}}&\le C\sum_{n=0}^{1}(1+t)^{-\frac{1}{2}+\frac{n}{2}}\biggl(\int_{0}^{t/2}(t-\tau)^{-\frac{1}{2}-\frac{n}{2}}\| (u-\chi)^{2}(\cdot, \tau)\|_{L^{1}}d\tau\\
&\ \ \ \ +\int_{t/2}^{t}(t-\tau)^{-\frac{n}{2}}\| (u-\chi)^{2}(\cdot, \tau)\|_{L^{\infty}}d\tau \biggl) \\
&\le C\sum_{n=0}^{1}(1+t)^{-\frac{1}{2}+\frac{n}{2}}\biggl(\int_{0}^{t/2}(t-\tau)^{-\frac{1}{2}-\frac{n}{2}}(1+\tau)^{-\min\{\alpha, \beta\}+\frac{1}{2}}d\tau\\
&\ \ \ \ +\int_{t/2}^{t}(t-\tau)^{-\frac{n}{2}}(1+\tau)^{-\min\{\alpha, \beta\}}d\tau \biggl) \\
&\le C
\begin{cases}
(1+t)^{-1}, &t\ge1, \ 3/2<\min\{\alpha, \beta\}<2, \\
(1+t)^{-1}\log(1+t), &t\ge1, \ \min\{\alpha, \beta\}=3/2, \\
(1+t)^{-\min\{\alpha, \beta\}+1/2}, &t\ge1, \ 1<\min\{\alpha, \beta\}<3/2.
\end{cases}
\end{split}
\end{align}
For $J_{4}$, in the same way to get \eqref{4-11}, from Lemma~\ref{N.decay} and \eqref{2-9}, we get 
\begin{align}
\label{4-14}
\begin{split}
\|J_{4}(\cdot, t)\|_{L^{\infty}}&\le C\sum_{n=0}^{1}(1+t)^{-\frac{1}{2}+\frac{n}{2}}\\
&\ \ \ \ \times \biggl(\int_{0}^{t/2}(t-\tau)^{-\frac{1}{2}-\frac{n}{2}}\| u^{3}(\cdot, \tau)\|_{L^{1}}d\tau+\int_{t/2}^{t}(t-\tau)^{-\frac{n}{2}}\| u^{3}(\cdot, \tau)\|_{L^{\infty}}d\tau \biggl) \\
&\le C\sum_{n=0}^{1}(1+t)^{-\frac{1}{2}+\frac{n}{2}}\\
&\ \ \ \ \times \biggl(\int_{0}^{t/2}(t-\tau)^{-\frac{1}{2}-\frac{n}{2}}(1+\tau)^{-1}d\tau+\int_{t/2}^{t}(t-\tau)^{-\frac{n}{2}}(1+\tau)^{-\frac{3}{2}}d\tau \biggl) \\
&\le C(1+t)^{-1}\log(1+t), \ t\ge1. 
\end{split}
\end{align}
We note that $J_{4}$ does not appear if $c=0$. Also, from Lemma~\ref{N.decay}, Gagliardo-Nirenberg inequality, \eqref{2-10} and \eqref{2-12}, we have the estimate for $J_{5}$ as follows.  
\begin{align}
\label{4-15}
\begin{split}
\|J_{5}(\cdot, t)\|_{L^{\infty}}&\le C\sum_{n=0}^{1}(1+t)^{-\frac{1}{2}+\frac{n}{2}}\biggl(\int_{0}^{t/2}(t-\tau)^{-\frac{1}{2}-\frac{n}{2}}\| u_{t}(\cdot, \tau)\|_{L^{\infty}}\|\chi(\cdot, \tau)\|_{L^{1}}d\tau\\
&\ \ \ \ +\int_{t/2}^{t}(t-\tau)^{-\frac{n}{2}}\|u_{t}(\cdot, \tau)\|_{L^{\infty}}\|\chi(\cdot, \tau)\|_{L^{\infty}}d\tau \biggl) \\
&\le C\sum_{n=0}^{1}(1+t)^{-\frac{1}{2}+\frac{n}{2}}\biggl(\int_{0}^{t/2}(t-\tau)^{-\frac{1}{2}-\frac{n}{2}}(1+\tau)^{-1}d\tau+\int_{t/2}^{t}(t-\tau)^{-\frac{n}{2}}(1+\tau)^{-\frac{3}{2}}d\tau \biggl) \\
&\le C(1+t)^{-1}\log(1+t), \ t\ge1. 
\end{split}
\end{align}
Finally, we evaluate $J_{6}$. Similarly as before, it follows that 
\begin{align}
\label{4-16}
\begin{split}
\|J_{6}(\cdot, t)\|_{L^{\infty}}&\le C\sum_{n=0}^{1}(1+t)^{-\frac{1}{2}+\frac{n}{2}}\\
&\ \ \ \times \biggl(\int_{0}^{t/2}(t-\tau)^{-\frac{1}{2}-\frac{n}{2}}\| u_{tx}(\cdot, \tau)\|_{L^{1}}d\tau+\int_{t/2}^{t}(t-\tau)^{-\frac{1}{4}-\frac{n}{2}}\|u_{tx}(\cdot, \tau)\|_{L^{2}}d\tau \biggl) \\
&\le C\sum_{n=0}^{1}(1+t)^{-\frac{1}{2}+\frac{n}{2}}\biggl(\int_{0}^{t/2}(t-\tau)^{-\frac{1}{2}-\frac{n}{2}}(1+\tau)^{-1}d\tau+\int_{t/2}^{t}(t-\tau)^{-\frac{1}{4}-\frac{n}{2}}(1+\tau)^{-\frac{5}{4}}d\tau \biggl) \\
&\le C(1+t)^{-1}\log(1+t), \ t\ge1. 
\end{split}
\end{align}
By using \eqref{4-10}, \eqref{2-10}, Gagliardo-Nirenberg inequality and \eqref{4-11} through \eqref{4-16}, we have
\begin{align*}
\|u(\cdot, t)-\chi(\cdot, t)-Z(\cdot, t)\|_{L^{\infty}}&\le \|U[\psi_{0}](\cdot, t, 0)-Z(\cdot, t)\|_{L^{\infty}}+C(1+t)^{-1}+C(1+t)^{-1}\log(1+t)\\
&\ \ \ \ +C\begin{cases}
(1+t)^{-1}, &t\ge1, \ \ 3/2<\min\{\alpha, \beta\}<2,  \\
(1+t)^{-1}\log(1+t), &t\ge1, \ \ \min\{\alpha, \beta\}=3/2,  \\
(1+t)^{-\min\{\alpha, \beta\}+1/2}, &t\ge1, \ \ 1<\min\{\alpha, \beta\}<3/2.
\end{cases}
\end{align*}
Therefore, from \eqref{4-4}, we finally obtain 
\begin{equation*}
\limsup_{t\to \infty}(1+t)^{\frac{\min\{\alpha, \beta\}}{2}}\|u(\cdot, t)-\chi(\cdot, t)-Z(\cdot, t)\|_{L^{\infty}}=0.
\end{equation*}
Thus we completes the proof of \eqref{1-19}. 

In the rest of this proof, we shall prove \eqref{1-23}. First, in the same way to get \eqref{4-8}, by using Lemma~\ref{eta.decay} and \eqref{4-9}, we can derive the upper bound estimate of \eqref{1-23} as follows.
 \begin{align}
 \label{4-17}
 \begin{split}
|Z(x, t)|&\le C\max\{|c_{\alpha, \beta}^{+}|, |c_{\alpha, \beta}^{-}|\}\sum_{n=0}^{1}\|\p_{x}^{1-n}\eta(\cdot, t)\|_{L^{\infty}}\int_{\R}|\p_{x}^{n}G_{0}(x-y, t)|(1+|y|)^{-(\min\{\alpha, \beta\}-1)}dy\\
&\le C\max\{|c_{\alpha, \beta}^{+}|, |c_{\alpha, \beta}^{-}|\}(1+t)^{-\frac{1}{2}+\frac{n}{2}}\begin{cases}
(1+t)^{-\frac{\min\{\alpha, \beta\}-1}{2}+\frac{n}{2}}, &t\ge1, \ \ 1<\min\{\alpha, \beta\}<2, \\
(1+t)^{-\frac{1+n}{2}}\log(1+t), &t\ge1, \ \ \min\{\alpha, \beta\}=2
\end{cases}\\
&\le C\max\{|c_{\alpha, \beta}^{+}|, |c_{\alpha, \beta}^{-}|\}\begin{cases}
(1+t)^{-\frac{\min\{\alpha, \beta\}}{2}}, &t\ge1, \ \ 1<\min\{\alpha, \beta\}<2, \\
(1+t)^{-1}\log(1+t), &t\ge1, \ \ \min\{\alpha, \beta\}=2.
\end{cases}
 \end{split}
 \end{align}
Next, we shall prove the lower bound estimate of \eqref{1-23}. For the simplicity, we set $\gamma \equiv \min\{\alpha, \beta\}$. First, we take $x=at$ in \eqref{1-21}, then we have from \eqref{1-4} and \eqref{1-21} 
\begin{align}
\label{4-18}
\begin{split}
Z(at, t)&=\int_{\R}c_{\alpha, \beta}(y)\biggl((\p_{y}G_{0})(at-y, t)\eta(at, t)\\
&\ \ \ +G_{0}(at-y, t)\frac{b}{2\mu}\chi(at, t)\eta(at, t)\biggl)(1+|y|)^{-(\gamma-1)}dy\\
&=c_{\alpha, \beta}^{+}\eta(at, t)\int_{0}^{\infty}(\p_{y}G_{0})(at-y, t)(1+y)^{-(\gamma-1)}dy\\
&\ \ \ +c_{\alpha, \beta}^{-}\eta(at, t)\int_{-\infty}^{0}(\p_{y}G_{0})(at-y, t)(1-y)^{-(\gamma-1)}dy\\
&\ \ \ +\frac{bc_{\alpha, \beta}^{+}}{2\mu}\eta(at, t)\chi(at, t)\int_{0}^{\infty}G_{0}(at-y, t)(1+y)^{-(\gamma-1)}dy \\
&\ \ \ +\frac{bc_{\alpha, \beta}^{-}}{2\mu}\eta(at, t)\chi(at, t)\int_{-\infty}^{0}G_{0}(at-y, t)(1-y)^{-(\gamma-1)}dy\\
&=\frac{(c_{\alpha, \beta}^{+}-c_{\alpha, \beta}^{-})}{4\sqrt{\pi}\mu^{3/2}}\eta(at, t)t^{-\frac{3}{2}}\int_{0}^{\infty}e^{-\frac{y^{2}}{4\mu t}}y(1+y)^{-(\gamma-1)}dy\\
&\ \ \ +\frac{b(c_{\alpha, \beta}^{+}+c_{\alpha, \beta}^{-})}{4\sqrt{\pi}\mu^{3/2}}\eta(at, t)\chi(at, t)t^{-\frac{1}{2}}\int_{0}^{\infty}e^{-\frac{y^{2}}{4\mu t}}(1+y)^{-(\gamma-1)}dy \\
&\equiv L_{1}(t)+L_{2}(t). 
\end{split}
\end{align}
From the mean value theorem, there exists $\theta_{j} \in (0, 1)$ such that 
\begin{equation}
\label{4-19}
(1+y)^{-(\gamma-j)}-y^{-(\gamma-j)}=-(\gamma-j)(y+\theta_{j})^{-(\gamma-j+1)}.
\end{equation}
Therefore, since 
\begin{equation}
\label{4-20}
\int_{0}^{\infty}e^{-\frac{y^{2}}{4\mu t}}y^{j-\gamma}dy=2^{j-\gamma}(\mu t)^{\frac{j+1-\gamma}{2}}\Gamma \biggl(\frac{j+1-\gamma}{2}\biggl), \ \ j\ge1,   
\end{equation}
it follows that 
\begin{align}
\label{4-21}
\begin{split}
L_{1}(t)
&=\frac{(c_{\alpha, \beta}^{+}-c_{\alpha, \beta}^{-})}{4\sqrt{\pi}\mu^{3/2}}\eta(at, t)t^{-\frac{3}{2}}\\
&\ \ \ \ \times \biggl(\int_{0}^{\infty}e^{-\frac{y^{2}}{4\mu t}}y^{2-\gamma}dy+\int_{0}^{\infty}e^{-\frac{y^{2}}{4\mu t}}y((1+y)^{-(\gamma-1)}-y^{-(\gamma-1)})dy\biggl)\\
&=\frac{(c_{\alpha, \beta}^{+}-c_{\alpha, \beta}^{-})}{4\sqrt{\pi}\mu^{3/2}}\eta(at, t)t^{-\frac{3}{2}}\\
&\ \ \ \ \times \biggl(2^{2-\gamma}(\mu t)^{\frac{3-\gamma}{2}}\Gamma\biggl(\frac{3-\gamma}{2}\biggl)-(\gamma-1)\int_{0}^{\infty}e^{-\frac{y^{2}}{4\mu t}}y(y+\theta_{1})^{-\gamma}dy\biggl), 
\end{split}
\end{align}
where $\Gamma(s)$ is the Gamma function for $s>0$. On the other hand, by making the integration by parts, we have from \eqref{4-19} and \eqref{4-20}
\begin{align}
\label{4-22}
\begin{split}
L_{2}(t)&=\frac{b(c_{\alpha, \beta}^{+}+c_{\alpha, \beta}^{-})}{4\sqrt{\pi}\mu^{3/2}}\eta(at, t)\chi(at, t)t^{-\frac{1}{2}} \\
&\ \ \ \ \times \biggl(\frac{1}{\gamma-2}+\frac{1}{2\mu(2-\gamma)}t^{-1}\int_{0}^{\infty}e^{-\frac{y^{2}}{4\mu t}}y(1+y)^{-(\gamma-2)}dy\biggl) \\
&=\frac{b(c_{\alpha, \beta}^{+}+c_{\alpha, \beta}^{-})}{4\sqrt{\pi}\mu^{3/2}}\eta(at, t)\chi(at, t)t^{-\frac{1}{2}}\biggl(\frac{1}{\gamma-2}+\frac{1}{2\mu(2-\gamma)}t^{-1}\\
&\ \ \ \ \times \biggl(2^{3-\gamma}(\mu t)^{2-\frac{\gamma}{2}}\Gamma \biggl(2-\frac{\gamma}{2}\biggl)+(2-\gamma)\int_{0}^{\infty}e^{-\frac{y^{2}}{4\mu t}}y(y+\theta_{2})^{-(\gamma-1)}dy\biggl)\biggl). \\
\end{split}
\end{align}
Therefore, from \eqref{4-18}, \eqref{4-21}, \eqref{4-22} and \eqref{2-21}, we obtain 
\begin{align}
\label{4-23}
\begin{split}
&\|Z(\cdot, t)\|_{L^{\infty}}\ge|Z(at, t)|=|L_{1}(t)+L_{2}(t)| \\
&=\frac{|\eta(at, t)|}{4\sqrt{\pi}\mu^{3/2}}
\biggl| 2^{2-\gamma}\mu^{\frac{3-\gamma}{2}}(c_{\alpha, \beta}^{+}-c_{\alpha, \beta}^{-})\Gamma\biggl(\frac{3-\gamma}{2}\biggl)t^{-\frac{\gamma}{2}}\\
&\ \ \ \ -(c_{\alpha, \beta}^{+}-c_{\alpha, \beta}^{-})(\gamma-1)t^{-\frac{3}{2}}\int_{0}^{\infty}e^{-\frac{y^{2}}{4\mu t}}y(y+\theta_{1})^{-\gamma}dy-\frac{b(c_{\alpha, \beta}^{+}+c_{\alpha, \beta}^{-})}{2-\gamma}\chi(at, t)t^{-\frac{1}{2}}\\
&\ \ \ \ +\frac{2^{2-\gamma}\mu^{1-\frac{\gamma}{2}}b(c_{\alpha, \beta}^{+}+c_{\alpha, \beta}^{-})}{2-\gamma}\Gamma\biggl(2-\frac{\gamma}{2}\biggl)\chi(at, t)t^{\frac{1-\gamma}{2}}\\
&\ \ \ \ +\frac{b(c_{\alpha, \beta}^{+}+c_{\alpha, \beta}^{-})}{2\mu}\chi(at, t)t^{-\frac{3}{2}}\int_{0}^{\infty}e^{-\frac{y^{2}}{4\mu t}}y(y+\theta_{2})^{-(\gamma-1)}dy\biggl| \\
&\ge \frac{\min\{1, e^{\frac{bM}{2\mu}}\}}{4\sqrt{\pi}\mu^{3/2}}|M_{1}(t)+M_{2}(t)+M_{3}(t)+M_{4}(t)+M_{5}(t)|. 
\end{split}
\end{align}
First, we focus on the term $M_{1}(t)$ and $M_{4}(t)$. From the mean value theorem, there exists $\tilde{\theta} \in (0, 1)$ such that 
\begin{equation*}
\chi_{*}\biggl(\frac{-a}{\sqrt{1+t}}\biggl)=\chi_{*}(0)-\frac{a}{\sqrt{1+t}}\chi_{*}'\biggl(\frac{-a\tilde{\theta}}{\sqrt{1+t}}\biggl).
\end{equation*}
Therefore, we get
\begin{equation}
\label{4-24}
\chi(at, t)=(1+t)^{-\frac{1}{2}}\chi_{*}(0)-a(1+t)^{-1}\chi_{*}'\biggl(\frac{-a\tilde{\theta}}{\sqrt{1+t}}\biggl).
\end{equation}
Thus, using \eqref{4-24}, we have 
\begin{align}
\label{4-25}
\begin{split}
&|M_{1}(t)+M_{4}(t)|\\
&=2^{2-\gamma}\mu^{1-\frac{\gamma}{2}}\biggl|\sqrt{\mu}(c_{\alpha, \beta}^{+}-c_{\alpha, \beta}^{-})\Gamma\biggl(\frac{3-\gamma}{2}\biggl)t^{-\frac{\gamma}{2}}+\frac{b(c_{\alpha, \beta}^{+}+c_{\alpha, \beta}^{-})}{2-\gamma}\Gamma\biggl(2-\frac{\gamma}{2}\biggl)\chi(at, t)t^{\frac{1-\gamma}{2}}\biggl|\\
&=2^{2-\gamma}\mu^{1-\frac{\gamma}{2}}\biggl|\sqrt{\mu}(c_{\alpha, \beta}^{+}-c_{\alpha, \beta}^{-})\Gamma\biggl(\frac{3-\gamma}{2}\biggl)t^{-\frac{\gamma}{2}}+\frac{b\chi_{*}(0)(c_{\alpha, \beta}^{+}+c_{\alpha, \beta}^{-})}{2-\gamma}\Gamma\biggl(2-\frac{\gamma}{2}\biggl)(1+t)^{-\frac{1}{2}}t^{\frac{1-\gamma}{2}}\\
&\ \ \ -\frac{ab(c_{\alpha, \beta}^{+}+c_{\alpha, \beta}^{-})}{2-\gamma}\Gamma\biggl(2-\frac{\gamma}{2}\biggl)(1+t)^{-1}t^{\frac{1-\gamma}{2}}\chi_{*}'\biggl(\frac{-a\tilde{\theta}}{\sqrt{1+t}}\biggl)\biggl|\\
&=2^{2-\gamma}\mu^{1-\frac{\gamma}{2}}\biggl|\biggl(\sqrt{\mu}(c_{\alpha, \beta}^{+}-c_{\alpha, \beta}^{-})\Gamma\biggl(\frac{3-\gamma}{2}\biggl)+\frac{b\chi_{*}(0)(c_{\alpha, \beta}^{+}+c_{\alpha, \beta}^{-})}{2-\gamma}\Gamma\biggl(2-\frac{\gamma}{2}\biggl)\biggl)t^{-\frac{\gamma}{2}}\\
&\ \ \ +\frac{b\chi_{*}(0)(c_{\alpha, \beta}^{+}+c_{\alpha, \beta}^{-})}{2-\gamma}\Gamma\biggl(2-\frac{\gamma}{2}\biggl)((1+t)^{-\frac{1}{2}}-t^{-\frac{1}{2}})t^{\frac{1-\gamma}{2}}\\
&\ \ \ -\frac{ab(c_{\alpha, \beta}^{+}+c_{\alpha, \beta}^{-})}{2-\gamma}\Gamma\biggl(2-\frac{\gamma}{2}\biggl)(1+t)^{-1}t^{\frac{1-\gamma}{2}}\chi_{*}'\biggl(\frac{-a\tilde{\theta}}{\sqrt{1+t}}\biggl)\biggl|.
\end{split}
\end{align}
From \eqref{1-4} and \eqref{1-6}, it is easy to see that 
\begin{equation*}
\chi_{*}'+\frac{x}{2\mu}\chi_{*}=\frac{b}{2\mu}\chi_{*}^{2}.
\end{equation*}
Then, using \eqref{2-11}, we obtain 
\begin{equation*}
|\chi_{*}'|\le C(|x|+|\chi_{*}|)|\chi_{*}|\le C|M|e^{-\frac{x^{2}}{8\mu}},
\end{equation*}
and hence 
\[\biggl|\chi_{*}'\biggl(\frac{-a\tilde{\theta}}{\sqrt{1+t}}\biggl)\biggl|\le C|M|e^{-\frac{\tilde{\theta}^{2}a^{2}}{8\mu(1+t)}}\le C=:C_{0}.\]
Therefore, from \eqref{4-25}, we get 
\begin{align}
\label{4-26}
\begin{split}
|M_{1}(t)+M_{4}(t)|&\ge 2^{2-\gamma}\mu^{1-\frac{\gamma}{2}}\biggl(|\tilde{\nu_{0}}|(1+t)^{-\frac{\gamma}{2}}-\frac{|b\chi_{*}(0)(c_{\alpha, \beta}^{+}+c_{\alpha, \beta}^{-})|}{2-\gamma}\Gamma\biggl(2-\frac{\gamma}{2}\biggl)t^{-1-\frac{\gamma}{2}}\\
&\ \ \ -\frac{C_{0}|ab(c_{\alpha, \beta}^{+}+c_{\alpha, \beta}^{-})|}{2-\gamma}\Gamma\biggl(2-\frac{\gamma}{2}\biggl)t^{-\frac{1+\gamma}{2}}\biggl),
\end{split}
\end{align}
where $\tilde{\nu_{0}}$ is defined by \eqref{1-25}. Next, for $M_{2}(t)$, we have from \eqref{4-20}
\begin{align}
\label{4-27}
\begin{split}
|M_{2}(t)|&\le (\gamma-1)|c_{\alpha, \beta}^{+}-c_{\alpha, \beta}^{-}|t^{-\frac{3}{2}}\int_{0}^{\infty}e^{-\frac{y^{2}}{4\mu t}}y^{1-\gamma}dy\\
&=\frac{\mu^{1-\frac{\gamma}{2}}(\gamma-1)|c_{\alpha, \beta}^{+}-c_{\alpha, \beta}^{-}|}{2^{\gamma-1}}\Gamma \biggl(1-\frac{\gamma}{2}\biggl)t^{-\frac{1+\gamma}{2}}.\\
\end{split}
\end{align}
It is easy to see that 
\begin{align}
\label{4-28}
\begin{split}
|M_{3}(t)|&\le \frac{|b(c_{\alpha, \beta}^{+}+c_{\alpha, \beta}^{-})|}{2-\gamma}\|\chi(\cdot, t)\|_{L^{\infty}}t^{-\frac{1}{2}}\le \frac{C_{1}|b(c_{\alpha, \beta}^{+}+c_{\alpha, \beta}^{-})|}{2-\gamma}t^{-1}.
\end{split}
\end{align}
Finally, for $M_{5}(t)$, we obtain from \eqref{4-20}
\begin{align}
\label{4-29}
\begin{split}
|M_{5}(t)|&\le\frac{|b(c_{\alpha, \beta}^{+}+c_{\alpha, \beta}^{-})|}{2\mu}
\|\chi(\cdot, t)\|_{L^{\infty}}t^{-\frac{3}{2}}\int_{0}^{\infty}e^{-\frac{y^{2}}{4\mu t}}y^{2-\gamma}dy\\
&\le \frac{C_{2}|b(c_{\alpha, \beta}^{+}+c_{\alpha, \beta}^{-})|}{(2\sqrt{\mu})^{\gamma-1}}\Gamma\biggl(\frac{3-\gamma}{2}\biggl)t^{-\frac{1+\gamma}{2}}.
\end{split}
\end{align}
Therefore, combining \eqref{4-23}, \eqref{4-26} through \eqref{4-29}, we have
\begin{align*}
\|Z(\cdot, t)\|_{L^{\infty}}&\ge \frac{\min\{1, e^{\frac{bM}{2\mu}}\}}{4\sqrt{\pi}\mu^{3/2}}(|M_{1}(t)+M_{4}(t)|-|M_{2}(t)|-|M_{3}(t)|-|M_{5}(t)|) \\
&\ge \frac{\min\{1, e^{\frac{bM}{2\mu}}\}}{4\sqrt{\pi}\mu^{3/2}}\biggl\{2^{2-\gamma}\mu^{1-\frac{\gamma}{2}}\biggl(|\tilde{\nu_{0}}|(1+t)^{-\frac{\gamma}{2}}-\frac{|b\chi_{*}(0)(c_{\alpha, \beta}^{+}+c_{\alpha, \beta}^{-})|}{2-\gamma}\Gamma\biggl(2-\frac{\gamma}{2}\biggl)t^{-1-\frac{\gamma}{2}}\\
&\ \ \ -\frac{C_{0}|ab(c_{\alpha, \beta}^{+}+c_{\alpha, \beta}^{-})|}{2-\gamma}\Gamma\biggl(2-\frac{\gamma}{2}\biggl)t^{-\frac{1+\gamma}{2}}\biggl)-\frac{\mu^{1-\frac{\gamma}{2}}(\gamma-1)|c_{\alpha, \beta}^{+}-c_{\alpha, \beta}^{-}|}{2^{\gamma-1}}\Gamma \biggl(1-\frac{\gamma}{2}\biggl)t^{-\frac{1+\gamma}{2}}\\
&\ \ \ -\frac{C_{1}|b(c_{\alpha, \beta}^{+}+c_{\alpha, \beta}^{-})|}{2-\gamma}t^{-1}-\frac{C_{2}|b(c_{\alpha, \beta}^{+}+c_{\alpha, \beta}^{-})|}{(2\sqrt{\mu})^{\gamma-1}}\Gamma\biggl(\frac{3-\gamma}{2}\biggl)t^{-\frac{1+\gamma}{2}}\biggl\}.
\end{align*}
Hence, there is a positive constant $\nu_{0}$ such that \eqref{1-23} holds. This completes the proof of Theorem~\ref{main2} for $1<\min\{\alpha, \beta\}<2$. 
\end{proof}

\section{Proof of Theorem~\ref{main2} for $\min\{\alpha, \beta\}=2$}

Finally in this section, we shall completes the proof of Theorem~\ref{main2}. Namely, we prove \eqref{1-20} and \eqref{1-24}. First, let us recall the following fact derived in~\cite{Ka07}. We consider 
\begin{align}
\label{5-1}
\begin{split}
v_{t}+av_{x}+(b\chi v)_{x}-\mu v_{xx}&=-\kappa\p_{x}(\chi^{3}), \ x\in \R, \ t>0, \\
v(x, 0)&=0, \ x\in \R.
\end{split}
\end{align}
The leading term of the solution $v(x, t)$ to \eqref{5-1} is given by $V(x, t)$ defined by \eqref{1-9}. More precisely, the following asymptotic formula can be shown (for the proof, see Proposition 4.3 in~\cite{Ka07}). 
\begin{prop}\label{AF2}
Assume that $|M| \le 1$. Then the estimate 
\begin{equation}
\label{5-2}
\|v(\cdot, t)-V(\cdot, t)\|_{L^{\infty}} \le C|M|(1+t)^{-1}, \ t\ge1
\end{equation}
holds. Here $v(x, t)$ is the solution to \eqref{5-1} and $V(x, t)$ is defined by \eqref{1-9}.
\end{prop}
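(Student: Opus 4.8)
The plan is to start from the representation formula of Lemma~\ref{RF}. Since \eqref{5-1} is exactly \eqref{2-24} with zero initial data and $\lambda(x,t)=-\kappa\chi^{3}(x,t)$, I obtain
\[
v(x,t)=-\kappa\int_{0}^{t}U[\p_{x}\chi^{3}(\tau)](x,t,\tau)\,d\tau=-\kappa\int_{0}^{t}\int_{\R}\p_{x}\bigl(G_{0}(x-y,t-\tau)\eta(x,t)\bigr)\eta(y,\tau)^{-1}\chi^{3}(y,\tau)\,dy\,d\tau,
\]
where the second equality uses the same rewriting of $U[\p_{x}\cdot]$ as in the proof of Lemma~\ref{N.decay}. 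The basic structural fact, obtained by the self-similar substitution $w=(y-a(1+\tau))/\sqrt{1+\tau}$ together with \eqref{1-4}, \eqref{1-22} and the definition \eqref{1-11} of $d$, is the mass identity $\int_{\R}\eta(y,\tau)^{-1}\chi^{3}(y,\tau)\,dy=d\,(1+\tau)^{-1}$. This is the source of both the decay rate and the logarithm: the source has $L^{1}$-mass $\sim(1+\tau)^{-1}$, which is the borderline rate whose time integral $\int_{0}^{t}(1+\tau)^{-1}d\tau=\log(1+t)$ diverges logarithmically.

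Next I would extract the leading profile by freezing the source at its concentration point $y=a(1+\tau)$. Replacing the $y$-dependence of the kernel by its value there and using the mass identity gives the candidate $-\kappa d\int_{0}^{t}(1+\tau)^{-1}\p_{x}\bigl(G_{0}(x-a(1+\tau),t-\tau)\eta(x,t)\bigr)\,d\tau$. Writing $X=x-a(1+t)$, one checks that $G_{0}(x-a(1+\tau),t-\tau)=(4\pi\mu(t-\tau))^{-1/2}e^{-X^{2}/4\mu(t-\tau)}$ and $\eta(x,t)=\eta_{*}(X/\sqrt{1+t})$. A scaling analysis (substitute $\tau=ts$) shows that the logarithm is generated across $1\lesssim\tau\lesssim t$ with the spatial profile stabilizing to its $t-\tau\approx 1+t$ value, so $t-\tau$ may be replaced by $1+t$ in the Gaussian and prefactor; then $\p_{x}=(1+t)^{-1/2}\p_{\xi}$ acting on $\eta_{*}(\xi)e^{-\xi^{2}/4\mu}$ with $\xi=X/\sqrt{1+t}$ reproduces exactly $(4\pi\mu)^{-1/2}\frac{d}{d\xi}(\eta_{*}(\xi)e^{-\xi^{2}/4\mu})=V_{*}(\xi)$. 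This assembles the profile $V$ in \eqref{1-9}, the extra half-power from the derivative accounting for the gap between the amplitude $(1+t)^{-1/2}\log(1+t)$ of the undifferentiated kernel integral and the amplitude $(1+t)^{-1}\log(1+t)$ of $V$.

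Finally I would bound the remainder $v-V$ by $O((1+t)^{-1})$, splitting $\int_{0}^{t}=\int_{0}^{t/2}+\int_{t/2}^{t}$. The freezing error is estimated by the mean value theorem in $y$, controlled by the first spatial moment $\int_{\R}|y-a(1+\tau)|\,\eta(y,\tau)^{-1}\chi^{3}(y,\tau)\,dy=O((1+\tau)^{-1/2})$ of the source (again via the self-similar substitution), which is one half-power better than the mass and hence removes the logarithm in the remainder; the errors from replacing $t-\tau$ by $1+t$ and from the $\tau\in[t/2,t]$ piece are handled using Lemma~\ref{N.decay} with $(p,q)=(\infty,1)$ together with the decay estimates of Lemma~\ref{chi.decay}, Lemma~\ref{heat.decay} and Lemma~\ref{eta.decay}. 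The main obstacle is exactly this last step: one must show the remainder is sharper than the leading term by a full factor of $\log(1+t)$, which requires simultaneously controlling the near-endpoint region $\tau\approx t$, where the heat-kernel factor $(t-\tau)^{-1/2}$ is singular, and verifying that the accumulated concentration error over the time integral reaches only $O((1+t)^{-1})$ rather than $O((1+t)^{-1}\log(1+t))$.
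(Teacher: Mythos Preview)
The paper does not give its own proof of this proposition; it simply cites Proposition~4.3 of \cite{Ka07}, whose machinery (the representation formula of Lemma~\ref{RF}, the weight $\eta$, and the operator $U$) the present paper has imported wholesale. Your outline is precisely the argument carried out there: express $v$ via Lemma~\ref{RF} as the Duhamel integral with source $-\kappa\chi^{3}$, use the self-similar mass identity $\int_{\R}\eta(y,\tau)^{-1}\chi^{3}(y,\tau)\,dy=d(1+\tau)^{-1}$ to generate the logarithm, freeze the kernel at $y=a(1+\tau)$ and replace $t-\tau$ by $1+t$ to produce $V$, and close with the first-moment bound $\int_{\R}|y-a(1+\tau)|\eta^{-1}\chi^{3}\,dy=O((1+\tau)^{-1/2})$, which is the half-power gain that removes the $\log$ from the remainder.

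One small slip to flag: on the near-endpoint interval $[t/2,t]$ you cannot invoke Lemma~\ref{N.decay} with $(p,q)=(\infty,1)$, because the $n=1$ summand then carries the non-integrable factor $(t-\tau)^{-1}$. The correct choice there is $(p,q)=(1,\infty)$, pairing $(t-\tau)^{-n/2}$ with $\|\chi^{3}(\cdot,\tau)\|_{L^{\infty}}\le C|M|^{3}(1+\tau)^{-3/2}$ from Lemma~\ref{chi.decay}; this yields $\int_{t/2}^{t}(t-\tau)^{-1/2}(1+\tau)^{-3/2}\,d\tau=O((1+t)^{-1})$ exactly as in the paper's treatment of the analogous Duhamel terms (compare \eqref{4-12}, \eqref{4-14}). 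Together with the observation $\int_{t/2}^{t}(1+\tau)^{-1}\,d\tau\le\log 2$ for the corresponding piece of $V$, the remainder estimate closes at $O((1+t)^{-1})$ and your sketch goes through.
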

\noindent
This formula helps us to complete the proof of Theorem~\ref{main2} with $\min\{\alpha, \beta\}=2$.

\medskip
\begin{proof}[\rm{\bf{End of the proof of Theorem~\ref{main2} for $\min\{\alpha, \beta\}=2$}}]
First, we shall prove \eqref{1-20}. We set 
\begin{align*}
\phi(x, t)&\equiv u(x, t)+u_{t}(x, t)-\chi(x, t)-v(x, t).
\end{align*}
Then, from \eqref{1-2}, \eqref{1-6} and \eqref{5-1}, we have the following initial value problem:
\begin{align}
\label{5-3}
\begin{split}
\phi_{t}+a\phi_{x}+(b\chi \phi)_{x}-\mu \phi_{xx}&=\p_{x}N_{1}(\chi)+\p_{x}N_{2}(u, \chi), \ x\in \R, \ t>0, \\
\phi(x, 0)&=\psi_{0}(x)=u_{0}(x)+u_{1}(x)-\chi_{0}(x), \ x\in \R,
\end{split}
\end{align}
where 
\begin{align}
\label{5-4}
\begin{split}
N_{1}(\chi)&\equiv 2a\mu\chi_{xx}-2ab\chi\chi_{x}+\frac{ab^{2}}{4\mu}\chi^{3},\\
N_{2}(u, \chi)&\equiv a(\p_{t}+a\p_{x})(u-\chi)-\mu \p_{t}\p_{x}(u-\chi)+b\chi\p_{t}(u-\chi)\\
&\ \ \ -\mu\p_{x}(\p_{t}+a\p_{x})\chi+b\chi(\p_{t}+a\p_{x})\chi-\frac{b}{2}(u-\chi)^{2}-\frac{c}{3!}(u-\chi)^{3}-\frac{c}{2}u\chi(u-\chi).
\end{split}
\end{align}
Therefore, from Lemma~\ref{RF}, we obtain 
\begin{align}
\label{5-5}
\begin{split}
\phi(x, t)=&\ U[\psi_{0}](x, t, 0)+\int_{0}^{t}U[\p_{x}N_{1}(\chi)(\tau)](x, t, \tau)d\tau+\int_{0}^{t}U[\p_{x}N_{2}(u, \chi)(\tau)](x, t, \tau)d\tau.
\end{split}
\end{align}
Thus, we have 
\begin{align}
\label{5-6}
\begin{split}
&u(x, t)-\chi(x, t)-Z(x, t)-V(x, t)\\
&=U[\psi_{0}](x, t, 0)-Z(x, t)-u_{t}(x, t)+v(x, t)-V(x, t)\\
&\ \ \ +\int_{0}^{t}U[\p_{x}N_{1}(\chi)(\tau)](x, t, \tau)d\tau+\int_{0}^{t}U[\p_{x}N_{2}(u, \chi)(\tau)](x, t, \tau)d\tau\\
&\equiv U[\psi_{0}](x, t, 0)-Z(x, t)-u_{t}(x, t)+v(x, t)-V(x, t)+K_{1}+K_{2},
\end{split}
\end{align}
where $Z(x, t)$ and $V(x, t)$ are defined by \eqref{1-21} and \eqref{1-9}, respectively. 

Now, we only need to evaluate the last two terms in the right hand side of \eqref{5-6}. First, we evaluate $K_{1}$. To estimate it, we introduce the useful property of $N_{1}(\chi)$. Actually, if we set $N_{0}(\chi)\equiv 2\mu\chi_{x}-\frac{b}{2}\chi^{2}$, from $N_{1}(\chi)=a(\p_{x} N_{0}(\chi)-\frac{b}{2\mu}\chi N_{0}(\chi))$, we get $N_{1}(\chi)=\eta\p_{x}(\eta^{-1}N_{0}(\chi))$. Therefore, from the definition of $K_{1}$ and \eqref{2-26}, and by making the integration by parts, we have
\begin{align}
\label{5-7}
\begin{split}
K_{1}(x, t)&=\int_{0}^{t}\int_{\R}\p_{x}(G_{0}(x-y, t-\tau)\eta(x, t))\p_{y}((\eta(y, \tau))^{-1}N_{0}(y, \tau))dyd\tau\\
&=\sum_{n=0}^{1}\p_{x}^{1-n}\eta(x, t)\biggl(\int_{0}^{t/2}+\int_{t/2}^{t}\biggl)\int_{\R}\p_{x}^{n}G_{0}(x-y, t-\tau)\p_{y}((\eta(y, \tau))^{-1}N_{0}(y, \tau))dyd\tau\\
&=\sum_{n=0}^{1}\p_{x}^{1-n}\eta(x, t)\biggl(\int_{0}^{t/2}\int_{\R}\p_{x}^{n+1}G_{0}(x-y, t-\tau)(\eta(y, \tau))^{-1}N_{0}(y, \tau)dyd\tau\\
&\ \ \ +\int_{t/2}^{t}\int_{\R}\p_{x}^{n}G_{0}(x-y, t-\tau)\p_{y}((\eta(y, \tau))^{-1}N_{0}(y, \tau))dyd\tau\biggl).
\end{split}
\end{align}
Also from Lemma~\ref{chi.decay} and Lemma~\ref{eta.decay}, for any non-negative integer $l$ and $1\le q\le \infty$, it is easy to see that
\begin{equation}
\label{5-8}
\|\p_{x}^{l}(\eta^{-1}N_{0}(\chi))(\cdot, t)\|_{L^{q}}\le C\sum_{j=0}^{l}(1+t)^{-\frac{1}{2}(l-j)}\|\p_{x}^{j}N_{0}(\cdot, t)\|_{L^{q}}\le C(1+t)^{-1+\frac{1}{2q}-\frac{l}{2}}.
\end{equation} 
Hence, from \eqref{5-7}, Young's inequality, Lemma~\ref{eta.decay}, \eqref{2-13} and \eqref{5-8}, we have 
\begin{align}
\label{5-9}
\begin{split}
\|K_{1}(\cdot, t)\|_{L^{\infty}}&\le C\sum_{n=0}^{1}\|\p_{x}^{1-n}\eta(\cdot, t)\|_{L^{\infty}}\biggl(\int_{0}^{t/2}\|\p_{x}^{n+1}G_{0}(\cdot, t-\tau)\|_{L^{\infty}}\|(\eta^{-1}N_{0}(\chi))(\cdot, \tau)\|_{L^{1}}d\tau\\
&\ \ \ +\int_{t/2}^{t}\|\p_{x}^{n}G_{0}(\cdot, t-\tau)\|_{L^{1}}\|\p_{x}(\eta^{-1}N_{0}(\chi))(\cdot, \tau)\|_{L^{\infty}}d\tau\biggl)\\
&\le C\sum_{n=0}^{1}(1+t)^{-\frac{1}{2}+\frac{n}{2}}\biggl(\int_{0}^{t/2}(t-\tau)^{-1-\frac{n}{2}}(1+\tau)^{-\frac{1}{2}}d\tau+\int_{t/2}^{t}(t-\tau)^{-\frac{n}{2}}(1+\tau)^{-\frac{3}{2}}d\tau\biggl)\\
&\le C(1+t)^{-1}, \ t\ge1. 
\end{split}
\end{align}
Next, we estimate $K_{2}$. Before do that, for $0<\e<\frac{1}{2}$, we prepare the following estimates:
\begin{align}
\label{5-10}
\|N_{2}(\cdot, t)\|_{L^{1}}&\le C(1+t)^{-\frac{3}{2}+\e},\\
\label{5-11}
\|N_{2}(\cdot, t)\|_{L^{2}}&\le C(1+t)^{-\frac{7}{4}+\e}.
\end{align}
We shall prove only \eqref{5-10}, since we can prove \eqref{5-11} in the same way. From \eqref{3-57}, \eqref{1-17}, \eqref{2-12}, \eqref{1-16} and \eqref{2-9}, we have 
\begin{align*}
\|N_{2}(\cdot, t)\|_{L^{1}}&\le C\|(\p_{t}+a\p_{x})(u-\chi)(\cdot, t)\|_{L^{1}}+C\|\p_{t}\p_{x}(u-\chi)(\cdot, t)\|_{L^{1}}\\
&\ \ \ +C\|\chi(\cdot, t)\|_{L^{\infty}}\|\p_{t}(u-\chi)(\cdot, t)\|_{L^{1}}+C\|\p_{x}(\p_{t}+a\p_{x})\chi(\cdot, t)\|_{L^{1}}\\
&\ \ \ +C\|\chi(\cdot, t)\|_{\infty}\|(\p_{t}+a\p_{x})\chi(\cdot, t)\|_{L^{1}}+C\|(u-\chi)^{2}(\cdot, t)\|_{L^{1}}\\
&\ \ \ +C\|(u-\chi)^{3}(\cdot, t)\|_{L^{1}}+C\|u(\cdot, t)\|_{L^{\infty}}\|\chi(\cdot, t)\|_{L^{\infty}}\|(u-\chi)(\cdot, t)\|_{L^{1}}\\
&\le C(1+t)^{-\frac{3}{2}+\e}. 
\end{align*}
Therefore, by using Lemma~\ref{N.decay}, \eqref{5-10} and \eqref{5-11}, we obtain 
\begin{align}
\label{5-12}
\begin{split}
&\|K_{2}(\cdot, t)\|_{L^{\infty}}\\
&\le C\sum_{n=0}^{1}(1+t)^{-\frac{1}{2}+\frac{n}{2}}\biggl(\int_{0}^{t/2}(t-\tau)^{-\frac{1}{2}-\frac{n}{2}}\| N_{2}(\cdot, \tau)\|_{L^{1}}d\tau+\int_{t/2}^{t}(t-\tau)^{-\frac{1}{4}-\frac{n}{2}}\| N_{2}(\cdot, \tau)\|_{L^{2}}d\tau \biggl) \\
&\le C\sum_{n=0}^{1}(1+t)^{-\frac{1}{2}+\frac{n}{2}}\biggl(\int_{0}^{t/2}(t-\tau)^{-\frac{1}{2}-\frac{n}{2}}(1+\tau)^{-\frac{3}{2}+\e}d\tau+\int_{t/2}^{t}(t-\tau)^{-\frac{1}{4}-\frac{n}{2}}(1+\tau)^{-\frac{7}{4}+\e}d\tau \biggl) \\
&\le C(1+t)^{-1}, \ t\ge1. 
\end{split}
\end{align}
Thus, from \eqref{5-6}, \eqref{2-10}, Gagliardo-Nirenberg inequality, \eqref{5-2}, \eqref{5-9} and \eqref{5-12}, we obtain 
\begin{align*}
\|u(\cdot, t)-\chi(\cdot, t)-Z(\cdot, t)-V(\cdot, t)\|_{L^{\infty}}\le \|U[\psi_{0}](\cdot, t, 0)-Z(\cdot, t)\|_{L^{\infty}}+C(1+t)^{-1}, \ \ t\ge1.
\end{align*}
Therefore, from \eqref{4-5}, we finally arrive at 
\begin{equation*}
\limsup_{t\to \infty}\frac{(1+t)}{\log(1+t)}\|u(\cdot, t)-\chi(\cdot, t)-Z(\cdot, t)-V(\cdot, t)\|_{L^{\infty}}=0.
\end{equation*}
This completes the proof of \eqref{1-20}.

Finally, we shall derive the estimate of $Z(x, t)+V(x, t)$, that is \eqref{1-24}. Since 
\[\|V(\cdot, t)\|_{L^{\infty}}=|\kappa d|\|V_{*}(\cdot)\|_{L^{\infty}}(1+t)^{-1}\log(1+t),\]
and using \eqref{4-17}, we can derive the upper bound estimate of \eqref{1-24}. Then, we shall only prove the lower bound estimate. First, we take $x=at$ in \eqref{1-9}. Since 
\begin{align*}
V_{*}(x)&=\frac{1}{\sqrt{4\pi \mu}}\biggl(\frac{b}{2\mu}\chi_{*}(x)\eta_{*}(x)-\frac{x}{2\mu}\eta_{*}(x)\biggl)=\frac{1}{4\sqrt{\pi}\mu^{3/2}}(b\chi_{*}(x)-x)\eta_{*}(x),
\end{align*}
from \eqref{1-10}, \eqref{1-4} and \eqref{1-22}, it follows that 
\begin{align}
\label{5-13}
\begin{split}
V(at, t)&=-\kappa dV_{*}\biggl(\frac{-a}{\sqrt{1+t}}\biggl)(1+t)^{-1}\log(1+t)\\
&=-\frac{\kappa d}{4\sqrt{\pi}\mu^{3/2}}\biggl(b\sqrt{1+t}\chi(at, t)+\frac{a}{\sqrt{1+t}}\biggl)\eta(at, t)(1+t)^{-1}\log(1+t)\\
&=-\frac{\kappa d}{4\sqrt{\pi}\mu^{3/2}}\eta(at, t)\biggl(b\chi(at, t)(1+t)^{-\frac{1}{2}}\log(1+t)+a(1+t)^{-\frac{3}{2}}\log(1+t)\biggl).
\end{split}
\end{align}
Combining \eqref{4-18} and \eqref{5-13}, we have from the triangle inequality that 
\begin{align}
\label{5-14}
\begin{split}
&\|Z(\cdot, t)+V(\cdot, t)\|_{L^{\infty}}\ge |Z(at, t)+V(at, t)|\\
&=\biggl|\frac{(c_{\alpha, \beta}^{+}-c_{\alpha, \beta}^{-})}{4\sqrt{\pi}\mu^{3/2}}\eta(at, t)t^{-\frac{3}{2}}\int_{0}^{\infty}e^{-\frac{y^{2}}{4\mu t}}y(1+y)^{-1}dy\\
&\ \ \ \ +\frac{b(c_{\alpha, \beta}^{+}+c_{\alpha, \beta}^{-})}{4\sqrt{\pi}\mu^{3/2}}\eta(at, t)\chi(at, t)t^{-\frac{1}{2}}\int_{0}^{\infty}e^{-\frac{y^{2}}{4\mu t}}(1+y)^{-1}dy\\
&\ \ \ \ -\frac{\kappa d}{4\sqrt{\pi}\mu^{3/2}}\eta(at, t)\biggl(b\chi(at, t)(1+t)^{-\frac{1}{2}}\log(1+t)+a(1+t)^{-\frac{3}{2}}\log(1+t)\biggl)\biggl|\\
&\ge \frac{|b|\min\{1, e^{\frac{bM}{2\mu}}\}}{4\sqrt{\pi}\mu^{3/2}}|\chi(at, t)|\bigg|(c_{\alpha, \beta}^{+}+c_{\alpha, \beta}^{-})t^{-\frac{1}{2}}\int_{0}^{\infty}e^{-\frac{y^{2}}{4\mu t}}(1+y)^{-1}dy\\
&\ \ \ \ -\kappa d(1+t)^{-\frac{1}{2}}\log(1+t)\biggl|-\frac{|(c_{\alpha, \beta}^{+}-c_{\alpha, \beta}^{-})|\max\{1, e^{\frac{bM}{2\mu}}\}}{4\sqrt{\pi}\mu^{3/2}}t^{-\frac{3}{2}}\int_{0}^{\infty}e^{-\frac{y^{2}}{4\mu t}}dy\\
&\ \ \ \ -\frac{|a\kappa d|\max\{1, e^{\frac{bM}{2\mu}}\}}{4\sqrt{\pi}\mu^{3/2}}(1+t)^{-\frac{3}{2}}\log(1+t)\\
&\equiv \frac{|b|\min\{1, e^{\frac{bM}{2\mu}}\}}{4\sqrt{\pi}\mu^{3/2}}|\chi(at, t)|W(t)-\frac{|(c_{\alpha, \beta}^{+}-c_{\alpha, \beta}^{-})|\max\{1, e^{\frac{bM}{2\mu}}\}}{4\mu}t^{-1}\\
&\ \ \ \ -\frac{|a\kappa d|\max\{1, e^{\frac{bM}{2\mu}}\}}{4\sqrt{\pi}\mu^{3/2}}(1+t)^{-\frac{3}{2}}\log(1+t).\\
\end{split}
\end{align}
Now, we evaluate $W(t)$ from below. Splitting the $y$-integral and using the triangle inequality, we obtain 
\begin{align}
\label{5-15}
\begin{split}
W(t)=&\ \biggl|(c_{\alpha, \beta}^{+}+c_{\alpha, \beta}^{-})t^{-\frac{1}{2}}\biggl(\int_{0}^{\sqrt{1+t}-1}+\int_{\sqrt{1+t}-1}^{\infty}\biggl)e^{-\frac{y^{2}}{4\mu t}}(1+y)^{-1}dy-\kappa d(1+t)^{-\frac{1}{2}}\log(1+t)\biggl|\\
\ge&\ \biggl|(c_{\alpha, \beta}^{+}+c_{\alpha, \beta}^{-})t^{-\frac{1}{2}}\int_{0}^{\sqrt{1+t}-1}e^{-\frac{y^{2}}{4\mu t}}(1+y)^{-1}dy-\kappa d(1+t)^{-\frac{1}{2}}\log(1+t)\biggl|\\
&\ -\biggl|(c_{\alpha, \beta}^{+}+c_{\alpha, \beta}^{-})t^{-\frac{1}{2}}\int_{\sqrt{1+t}-1}^{\infty}e^{-\frac{y^{2}}{4\mu t}}(1+y)^{-1}dy\biggl|\\
\equiv&\ W_{1}(t)-W_{2}(t).
\end{split}
\end{align}
From the mean value theorem, there exists $\theta \in(0, 1)$ such that 
\begin{equation*}
e^{-\frac{y^{2}}{4\mu t}}=1-\frac{y^{2}}{4\mu t}e^{-\theta \frac{y^{2}}{4\mu t}}. 
\end{equation*}
Therefore, we have  
\begin{align}
\label{5-16}
\begin{split}
W_{1}(t)=&\ \biggl|(c_{\alpha, \beta}^{+}+c_{\alpha, \beta}^{-})t^{-\frac{1}{2}}\int_{0}^{\sqrt{1+t}-1}\biggl(1-\frac{y^{2}}{4\mu t}e^{-\theta \frac{y^{2}}{4\mu t}}\biggl)(1+y)^{-1}dy-\kappa d(1+t)^{-\frac{1}{2}}\log(1+t)\biggl|\\
\ge&\ \biggl|(c_{\alpha, \beta}^{+}+c_{\alpha, \beta}^{-})t^{-\frac{1}{2}}\int_{0}^{\sqrt{1+t}-1}(1+y)^{-1}dy-\kappa d(1+t)^{-\frac{1}{2}}\log(1+t)\biggl|\\
&\ -\biggl|\frac{c_{\alpha, \beta}^{+}+c_{\alpha, \beta}^{-}}{4\mu}t^{-\frac{3}{2}}\int_{0}^{\sqrt{1+t}-1}e^{-\theta \frac{y^{2}}{4\mu t}}y^{2}(1+y)^{-1}dy\biggl|\\
\equiv&\ W_{1.1}(t)-W_{1.2}(t).
\end{split}
\end{align}
For $W_{1.1}(t)$, we obtain  
\begin{align}
\label{5-17}
\begin{split}
W_{1.1}(t)=&\ \biggl|\frac{c_{\alpha, \beta}^{+}+c_{\alpha, \beta}^{-}}{2}t^{-\frac{1}{2}}\log(1+t)-\kappa d(1+t)^{-\frac{1}{2}}\log(1+t)\biggl|\\
=&\ \biggl|\frac{c_{\alpha, \beta}^{+}+c_{\alpha, \beta}^{-}}{2}t^{-\frac{1}{2}}\log(1+t)-\frac{c_{\alpha, \beta}^{+}+c_{\alpha, \beta}^{-}}{2}(1+t)^{-\frac{1}{2}}\log(1+t)\\
&\ +\biggl(\frac{c_{\alpha, \beta}^{+}+c_{\alpha, \beta}^{-}}{2}-\kappa d\biggl)(1+t)^{-\frac{1}{2}}\log(1+t)\biggl| \\
\ge&\ |\tilde{\nu_{1}}|(1+t)^{-\frac{1}{2}}\log(1+t)-\frac{|c_{\alpha, \beta}^{+}+c_{\alpha, \beta}^{-}|}{2}\log(1+t)(t^{-\frac{1}{2}}-(1+t)^{-\frac{1}{2}})\\
\ge&\ |\tilde{\nu_{1}}|(1+t)^{-\frac{1}{2}}\log(1+t)-\frac{|c_{\alpha, \beta}^{+}+c_{\alpha, \beta}^{-}|}{2}t^{-\frac{3}{2}}\log(1+t),
\end{split}
\end{align}
where $\tilde{\nu_{1}}$ is defined by \eqref{1-25}. On the other hand, for $W_{1.2}(t)$, we get 
\begin{align}
\label{5-18}
\begin{split}
W_{1.2}(t)&\le \frac{|c_{\alpha, \beta}^{+}+c_{\alpha, \beta}^{-}|}{4\mu}t^{-\frac{3}{2}}\int_{0}^{\sqrt{1+t}-1}y\ dy=\frac{|c_{\alpha, \beta}^{+}+c_{\alpha, \beta}^{-}|}{8\mu}t^{-\frac{3}{2}}(\sqrt{1+t}-1)^{2}\le \frac{|c_{\alpha, \beta}^{+}+c_{\alpha, \beta}^{-}|}{8\mu}t^{-\frac{1}{2}}. 
\end{split}
\end{align}
Analogously, for $W_{2}(t)$, it is easy to see that 
\begin{align}
\label{5-19}
\begin{split}
W_{2}(t)&\le|c_{\alpha, \beta}^{+}+c_{\alpha, \beta}^{-}|t^{-\frac{1}{2}}\biggl(\sup_{y\ge \sqrt{1+t}-1}(1+|y|)^{-1}\biggl)\int_{y\ge \sqrt{1+t}-1}e^{-\frac{y^{2}}{4\mu t}}dy\\
&\le \sqrt{\mu \pi}|c_{\alpha, \beta}^{+}+c_{\alpha, \beta}^{-}|(1+t)^{-\frac{1}{2}}.
\end{split}
\end{align}
Finally, combining \eqref{5-14} through \eqref{5-19}, \eqref{2-12} and \eqref{4-24}, we obtain 
\begin{align*}
&\|Z(\cdot, t)+V(\cdot, t)\|_{L^{\infty}}\\
&\ge \frac{|b|\min\{1, e^{\frac{bM}{2\mu}}\}}{4\sqrt{\pi}\mu^{3/2}}|\chi(at, t)||\tilde{\nu_{1}}|(1+t)^{-\frac{1}{2}}\log(1+t)\\
&\ \ \ \ -\frac{|b|\min\{1, e^{\frac{bM}{2\mu}}\}|c_{\alpha, \beta}^{+}+c_{\alpha, \beta}^{-}|}{8\sqrt{\pi}\mu^{3/2}}t^{-\frac{3}{2}}\log(1+t)\|\chi(\cdot, t)\|_{L^{\infty}}\\
&\ \ \ \ -\frac{|b|\min\{1, e^{\frac{bM}{2\mu}}\}|c_{\alpha, \beta}^{+}+c_{\alpha, \beta}^{-}|}{4\sqrt{\pi}\mu^{3/2}}\biggl(\sqrt{\mu \pi}+\frac{1}{8\mu}\biggl)t^{-\frac{1}{2}}\|\chi(\cdot, t)\|_{L^{\infty}}\\
&\ \ \ \ -\frac{|(c_{\alpha, \beta}^{+}-c_{\alpha, \beta}^{-})|\max\{1, e^{\frac{bM}{2\mu}}\}}{4\mu}t^{-1}-\frac{|a\kappa d|\max\{1, e^{\frac{bM}{2\mu}}\}}{4\sqrt{\pi}\mu^{3/2}}(1+t)^{-\frac{3}{2}}\log(1+t)\\
&\ge \frac{|b\chi_{*}(0)|\min\{1, e^{\frac{bM}{2\mu}}\}}{4\sqrt{\pi}\mu^{3/2}}|\tilde{\nu_{1}}|(1+t)^{-1}\log(1+t)-C_{0}\frac{|ab|\min\{1, e^{\frac{bM}{2\mu}}\}}{4\sqrt{\pi}\mu^{3/2}}|\tilde{\nu_{1}}|(1+t)^{-\frac{3}{2}}\log(1+t)\\
&\ \ \ \ -\frac{|b|\min\{1, e^{\frac{bM}{2\mu}}\}|c_{\alpha, \beta}^{+}+c_{\alpha, \beta}^{-}|}{8\sqrt{\pi}\mu^{3/2}}t^{-2}\log(1+t)\\
&\ \ \ \ -\frac{|b|\min\{1, e^{\frac{bM}{2\mu}}\}|c_{\alpha, \beta}^{+}+c_{\alpha, \beta}^{-}|}{4\sqrt{\pi}\mu^{3/2}}\biggl(\sqrt{\mu \pi}+\frac{1}{8\mu}\biggl)t^{-1}\\
&\ \ \ \ -\frac{|(c_{\alpha, \beta}^{+}-c_{\alpha, \beta}^{-})|\max\{1, e^{\frac{bM}{2\mu}}\}}{4\mu}t^{-1}-\frac{|a\kappa d|\max\{1, e^{\frac{bM}{2\mu}}\}}{4\sqrt{\pi}\mu^{3/2}}(1+t)^{-\frac{3}{2}}\log(1+t).\\
\end{align*}
As a conclusion, there is a positive constant $\nu_{1}$ such that \eqref{1-24} holds. This completes the proof of Theorem~\ref{main2} for $\min\{\alpha, \beta\}=2$. 
\end{proof}

\section*{Acknowledgments}

The author would like to express his sincere gratitude to Professor Hideo Kubo for his feedback and valuable advices. 

\smallskip
The part of this study was done when the author visited School of Mathematical Sciences, Zhejiang University. He would like to thank Professor Jian Zhai for his kindly invitation and grate support. Also, he grateful to Professor Ziheng Tu and Mr Xiaochen Chen for their stimulating discussions.

\smallskip
This study is partially supported by Grant-in-Aid for JSPS Research Fellow No.18J12340 and MEXT through Program for Leading Graduate Schools (Hokkaido University ``Ambitious Leader's Program"). 



\vskip10pt
\par\noindent
\begin{flushleft}Ikki Fukuda\\
Department of Mathematics, Hokkaido University\\
Sapporo 060-0810, Japan\\
E-mail: i.fukuda@math.sci.hokudai.ac.jp
\end{flushleft}


\begin{thebibliography}{99}
\addcontentsline{toc}{section}{References}

\bibitem{F19-1}I. Fukuda: Asymptotic behavior of solutions to the generalized KdV-Burgers equation, Osaka J. Math. {\bf 56} (2019) 883-906.

\bibitem{F19-2}I. Fukuda: Asymptotic behavior of solutions to the generalized KdV-Burgers equation with slowly decaying data, J. Math. Anal. Appl. {\bf 480} (2019) 123446. 

\bibitem{HKN07}N. Hayashi, E.I. Kaikina and P.I. Naumkin: Large time asymptotics for the BBM-Burgers equation, Ann. Henri Poincar$\acute{\text{e}}$ {\bf 8} (2007) 485-511.

\bibitem{HN06}N. Hayashi and P.I. Naumkin: Asymptotics for the Korteweg-de Vries-Burgers equation, Acta Math. Sin. (Engl. Ser.), {\bf 22} (2006) 1441-1456.

\bibitem{IIOW19}M. Ikeda, T. Inui, M. Okamoto and Y. Wakasugi: $L^{p}$-$L^{q}$ estimates for the damped wave equation and the critical exponent for the nonlinear problem with slowly decaying data, Comm. Pure Appl. Anal. {\bf 18} (2019) 1967-2008.

\bibitem{IIW17}M. Ikeda, T. Inui and Y. Wakasugi: The Cauchy problem for the nonlinear damped wave equation with slowly decaying data, NoDEA Nonlinear Differential Equations Appl. {\bf 24} (2017) no.2, Art. 10, 53 pp. 

\bibitem{JX95}S. Jin and Z. Xin: The relaxation schemes for systems of conservation laws in arbitrary space dimensions, Commun. Pure Appl. Math. {\bf 48} (1995) 235-276.

\bibitem{KP05}E.I. Kaikina and H.F. Ruiz-Paredes: Second term of asymptotics for KdVB equation with large initial data, Osaka J. Math. {\bf42} (2005) 407-420.

\bibitem{Ka07}M. Kato: Large time behavior of solutions to the generalized Burgers equations, Osaka J. Math. {\bf44} (2007) 923-943.

\bibitem{KU17}M. Kato and Y. Ueda: Asymptotic profile of solutions for the damped wave equation with a nonlinear convection term, Math. Meth. Appl. Sci. {\bf40} (2017) 7760-7779.

\bibitem{KU13}T. Kawakami and Y. Ueda: Asymptotic profiles to the solutions for a nonlinear damped wave equation, Differential and Integral equations {\bf 26} No. 7-8 (2013) 781-814.

\bibitem{K87}S. Kawashima: Large-time behavior of solutions to hyperbolic-parabolic systems of conservation laws and applications, Proc. Roy. Soc. Edinburgh Sect. A {\bf 106} (1987) 169-194.

\bibitem{Ki07}Y. Kitagawa: Asymptotic behavior of solutions to the initial value problem for one dimensional single viscous conservation law, Osaka University, Master's thesis, 2007 (in Japanese). 

\bibitem{MN04}A. Matsumura and K. Nishihara: Global solutions of nonlinear differential equations-Mathematical analysis for compressible viscous fluids, Nippon-Hyoron-Sha, Tokyo, 2004 (in Japanese).

\bibitem{NN08}T. Narazaki and K. Nishihara: Asymptotic behavior of solutions for the damped wave equation with slowly decaying data, J. Math. Anal. Appl. {\bf 338} (2008) 803-819.

\bibitem{OZ06}R. Orive and E. Zuazua: Long-time behavior of solutions to a nonlinear hyperbolic relaxation system, J. Differential Equations {\bf 228} (2006) 17-38.

\bibitem{RU17}R. Racke and Y. Ueda: Nonlinear thermoelastic plate equations-global existence and decay rates for the Cauchy problems, J. Differential Equations {\bf 263} (2017) 8138-8177.

\bibitem{UK07}Y. Ueda and S. Kawashima: Large time behavior of solutions to a semilinear hyperbolic system with relaxation, J. Hyperbolic Differ.  Equ. {\bf 4} (2007) 147-179.

\bibitem{W74}G. B. Whitham: Linear and nonlinear waves, Wiley, New York, 1974.

\end{thebibliography}
\end{document}